\numberwithin{equation}{section}
\newtheorem{theorem}{Theorem}[section]
\newtheorem{lemma}[theorem]{Lemma}\newtheorem{observation}[theorem]{Observation}
\newtheorem{proposition}[theorem]{Proposition}
\newtheorem{discussion}[theorem]{Discussion}
\newtheorem{corollary}[theorem]{Corollary}\newtheorem{notation}[theorem]{Notation}
\theoremstyle{definition}
\newtheorem{definition}[theorem]{Definition}\newtheorem{fact}[theorem]{Fact}
\theoremstyle{remark}
\newtheorem{remark}[theorem]{Remark}\newtheorem{problem}[theorem]{Problem}
\newtheorem{example}[theorem]{Example}
\newtheorem{question}[theorem]{Question}
\newtheorem{conjecture}[theorem]{Conjecture}
\newtheorem{acknowledgement}{Acknowledgement}
\newcommand{\HH}{\operatorname{H}}
\newcommand{\DVR}{\operatorname{DVR}}
\newcommand{\Ass}{\operatorname{Ass}}
\newcommand{\im}{\operatorname{im}}
\newcommand{\grade}{\operatorname{grade}}
\newcommand{\pic}{\operatorname{Pic}}
\newcommand{\Cl}{\operatorname{Cl}}
\newcommand{\UFD}{\operatorname{UFD}}
\newcommand{\Spec}{\operatorname{Spec}}
\newcommand{\type}{\operatorname{type}}
\newcommand{\Ht}{\operatorname{ht}}
\newcommand{\pd}{\operatorname{p.dim}}
\newcommand{\gd}{\operatorname{gl.dim}}
\newcommand{\Proj}{\operatorname{Proj}}
\newcommand{\Syz}{\operatorname{Syz}}
\newcommand{\BNSI}{\operatorname{BNSI}}
\newcommand{\rank}{\operatorname{rank}}
\newcommand{\Gdim}{\operatorname{Gdim}}
\newcommand{\V}{\operatorname{V}}
\newcommand{\id}{\operatorname{id}}
\newcommand{\Ext}{\operatorname{Ext}}
\newcommand{\Se}{\operatorname{S}}
\newcommand{\Sym}{\operatorname{Sym}}
\newcommand{\R}{\operatorname{R}}
\newcommand{\Supp}{\operatorname{Supp}}
\newcommand{\Tor}{\operatorname{Tor}}
\newcommand{\Hom}{\operatorname{Hom}}
\newcommand{\Ann}{\operatorname{Ann}}
\newcommand{\codepth}{\operatorname{codepth}}
\newcommand{\depth}{\operatorname{depth}}
\newcommand{\emd}{\operatorname{emb}}
\newcommand{\Soc}{\operatorname{Soc}}
\newcommand{\Char}{\operatorname{char}}
\newcommand{\coker}{\operatorname{coker}}
\newcommand{\D}{ D}
\newcommand{\vil}{\operatornamewithlimits{\varinjlim}}
\newcommand{\lo}{\longrightarrow}
\newcommand{\fm}{\frak{m}}
\newcommand{\fp}{\frak{p}}
\newcommand{\fq}{\frak{q}}
\newcommand{\fa}{\frak{a}}
\newcommand{\fn}{\frak{n}}
\newcommand{\bm}{\begin{matrix}}
\newcommand{\dm}{\end{matrix}}
\newcommand{\PP}{\mathbb{P}}
\begin{document}

\author[]{mohsen asgharzadeh}

\address{}
\email{mohsenasgharzadeh@gmail.com}

\title[ ]
{reflexivity revisited}

\subjclass[2010]{ Primary 13D02; 13C10}
\keywords{Betti numbers;   free modules; reflexive modules; splitting. }
\dedicatory{}

\begin{abstract}We study some aspects of reflexive modules. For example,
we search conditions for which reflexive modules are free or   close to  free modules.
\end{abstract}

\maketitle

\setcounter{tocdepth}{1}
\tableofcontents

\section{Introduction}

In this paper $(R,\fm,k)$ is a commutative Noetherian local ring and $M$ is a finitely generated $R$-module, unless otherwise specified. The notation $\mathcal{M}$ stands for a general module. For simplicity, the notation $\mathcal{M}^\ast$ stands for $\Hom_R(\mathcal{M},R)$. Then $\mathcal{M}$ is called \textit{reflexive} if the natural map $\varphi_{\mathcal{M}}:\mathcal{M}\to\mathcal{M}^{\ast\ast}$ is bijective. Finitely generated projective modules are reflexive. In his seminal paper \cite{kap}, Kaplansky proved that projective modules over local rings are free. The local assumption is really important. Indeed, there are many interesting research papers, and even books, on the freeness of projective modules over polynomial rings with coefficients from a field. In general, the class of reflexive modules is extremely large compared to the projective modules.

As a generalization of Seshadri's result, Serre observed in 1958 that over $2$-dimensional regular local rings, finitely generated reflexive modules are free. This result has some applications; for instance, in the arithmetic properties of Iwasawa algebras (see \cite{ser}).

It seems the freeness of reflexive modules is subtle even over very special rings. For example, in \cite[Page 518]{Lam}, Lam says that the only obvious examples of reflexive modules over $R:=\frac{k[X,Y]}{(X,Y)^2}$ are the free modules $R^n$. Ramras posed the following:

\begin{problem}[See \cite{ramg}, Page 380]
	When are finitely generated reflexive modules free?
\end{problem}

Over quasi-reduced rings, Problem 1.1 was completely answered (see Proposition \ref{4.1}). Ramras proved any finitely generated reflexive module over $\BNSI$ (Betti numbers strictly increasing) rings is free. We present some applications of this result. Also, we introduce the class of \textit{eventually $\BNSI$} rings and study freeness of reflexive modules over them. We know any nonzero free module decomposes into a direct sum of rank-one submodules. Treger conjectured (see \cite[Page 462]{t}):

\begin{conjecture}
	Let $(R,\fm,k)$ be a complete local (singular and containing a field) normal domain of dimension $2$ where $k=\overline{k}$ and $\Char(k) \neq 2$. Then $R$ is a cone such as $\frac{k[[x,y,z]]}{(x^2+y^2+z^2)}$ if and only if every nonzero reflexive module $M$ decomposes into a direct sum of rank-one submodules.
\end{conjecture}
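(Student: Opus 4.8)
The plan is to translate the conjecture into the representation theory of maximal Cohen--Macaulay modules over the surface singularity $R$ and then invoke the structure theory of rings of finite Cohen--Macaulay type. First I would record the reductions that make the statement tractable. Since $R$ is a two--dimensional normal domain it is Cohen--Macaulay, and a finitely generated module is reflexive if and only if it satisfies Serre's condition $(S_2)$, which here is the same as being maximal Cohen--Macaulay (MCM); thus ``reflexive'' may be replaced throughout by ``MCM''. Moreover, over a normal domain the isomorphism classes of rank--one reflexive modules are exactly the elements of the divisor class group $\Cl(R)$, with product $(L,L')\mapsto (L\otimes_R L')^{\ast\ast}$. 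Consequently the right--hand side of the biconditional is equivalent to the purely module--theoretic assertion that \emph{every indecomposable MCM $R$--module has rank one}. Because $R$ is complete the Krull--Schmidt theorem holds, so this is the natural invariant to control.

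For the forward implication I would exhibit the quadric cone as an invariant ring. Writing $S=k[[u,v]]$ and letting $\mathbb{Z}/2=\{\pm\mathrm{id}\}$ act by $(u,v)\mapsto(-u,-v)$, the invariant ring $S^{\mathbb{Z}/2}=k[[u^2,uv,v^2]]$ is the quadric cone; the passage between the relation $(u^2)(v^2)=(uv)^2$ and the form $x^2+y^2+z^2$ is exactly where the hypothesis $\Char(k)\neq 2$ is used, to diagonalise the quadratic form. The group $\mathbb{Z}/2$ is small in $GL(2,k)$ and acts freely off the origin, so by Auslander's version of the McKay correspondence the indecomposable MCM $R$--modules are the modules $M_\rho=(S\otimes_k\rho)^{\mathbb{Z}/2}$ indexed by the irreducible representations $\rho$ of $\mathbb{Z}/2$, with $\rank_R M_\rho=\dim_k\rho$. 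Since $\mathbb{Z}/2$ has exactly two irreducible representations, both one--dimensional (the trivial one giving $R$ and the sign one giving the nontrivial class of $\Cl(R)=\mathbb{Z}/2$), every indecomposable MCM module has rank one, and Krull--Schmidt yields the desired decomposition. Equivalently one reads this off Eisenbud's matrix--factorisation description: up to equivalence the quadric $xy-z^2=\det\bigl(\begin{smallmatrix} x & z\\ z& y\end{smallmatrix}\bigr)$ admits a single nontrivial matrix factorisation, of size $2$, whose cokernel is rank one.

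The reverse implication is where the real work lies. Assuming every indecomposable MCM module has rank one bounds the ranks of indecomposable MCM modules by $1$, so $R$ has \emph{bounded} Cohen--Macaulay type; I would first upgrade this to \emph{finite} Cohen--Macaulay type (the second Brauer--Thrall phenomenon for maximal Cohen--Macaulay modules). The classification of two--dimensional complete normal domains of finite Cohen--Macaulay type over an algebraically closed field (in good characteristic) then forces $R$ to be a quotient singularity $S^{G}$ for a finite small $G\subset GL(2,k)$ (Auslander, Esnault, Artin--Verdier). Applying the McKay correspondence once more, the hypothesis that \emph{all} indecomposable MCM modules have rank one says that every irreducible representation of $G$ is one--dimensional, i.e.\ $G$ is abelian; and a finite small abelian subgroup of $GL(2,k)$ is cyclic, so $R$ is a cyclic quotient singularity.

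The final step, pinning down the quadric cone among the cyclic quotient singularities, is the main obstacle, and I expect it to be the genuine difficulty of the conjecture. The point is that the rank--one decomposition property alone holds for \emph{every} cyclic quotient singularity (for each rational double point of type $A_n$, and for every cone over a rational normal curve), so it does not by itself isolate $x^2+y^2+z^2$; all the representation--theoretic input above is classical. The hypothesis that must do the remaining work is that $R$ is a \emph{cone}, i.e.\ a standard graded normal surface singularity, and one would have to show that among standard graded cyclic quotient singularities the Gorenstein/hypersurface structure forced by $\Char(k)\neq 2$ singles out the quadric. Controlling this last reduction — equivalently, fixing the precise scope of ``cone such as $x^2+y^2+z^2$'' in Treger's formulation — is exactly the crux, since the decomposition property naturally characterises the broader class of cyclic quotient singularities rather than the quadric cone alone.
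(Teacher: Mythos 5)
Your analysis in fact uncovers the correct resolution of this conjecture, but you stop one step short of drawing the conclusion that your own argument forces. The paper does not prove Conjecture 1.2; it \emph{refutes} it in the stated complete local form and then proves a corrected, standard graded hypersurface version. The refutation is exactly the phenomenon you identify in your last paragraph: the rank--one decomposition property holds for every cyclic quotient surface singularity, not only for the quadric cone. The paper's Example 6.1 takes $R=\mathbb{C}[[x^4,x^3y,x^2y^2,xy^3,y^4]]$, the $4$-Veronese of $\mathbb{C}[[u,v]]$ (a cone over a rational normal quartic curve, hence one of the rings on your list), checks that it satisfies every hypothesis of the conjecture, and shows via Krull--Schmidt and the fact that every indecomposable reflexive is a direct summand of $\mathbb{C}[[u,v]]$ that all indecomposables have rank one; Example 6.4(i) does the same for the $A_{n-1}$ singularities $\mathbb{C}[[x,y,z]]/(x^n+y^2+z^2)$. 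So the ``if'' direction of the biconditional is simply false, and the honest endpoint of your reverse--implication analysis is a disproof, not an open difficulty. Where you write that pinning down the quadric ``is the genuine difficulty of the conjecture,'' you should instead conclude that the conjecture fails as stated and exhibit one of these rings as a counterexample.

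On the positive side, your forward implication (McKay correspondence for $\mathbb{Z}/2\subset GL(2,k)$, two one--dimensional irreducibles, hence two indecomposable MCM modules both of rank one) is correct and agrees with the converse half of the paper's Proposition 6.3. But the paper's salvage of the conjecture is different from the one you sketch: it restricts to \emph{standard graded hypersurfaces} and argues projectively, identifying graded reflexive modules with vector bundles on the smooth plane curve $\mathcal{C}=\Proj(R)$, invoking Ballico's theorem that a curve on which every vector bundle splits into line bundles is $\PP^1$, and then using the genus formula $g=\frac{(d-1)(d-2)}{2}$ to force $\deg(f)=2$, with $\Char(k)\neq 2$ entering only to diagonalise the quadric. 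Your proposed route through bounded Cohen--Macaulay type, Brauer--Thrall II, and the classification of quotient singularities is heavier machinery, is not carried out, and has its own gap (bounded rank gives bounded, not automatically finite, CM type --- one must also control $\Cl(R)$); more importantly, as you yourself observe, it can only ever isolate the class of cyclic quotient singularities, which is precisely why the complete local statement is false.
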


Section 2 connects reflexivity to finiteness conditions including finite generation. As an application, we study the following:

\begin{question}
	When are quasi-reflexive modules flat?
\end{question}

Section 3 collects different notions of reflexivity. It may be worth mentioning that there are connections to the miracle of set theory with application to homological reflexivity. For instance, see the seminal work of Shelah \cite{sh}.

Section 4 deals with the freeness of reflexive modules over some classes of rings. This section is divided into five subsections: the first subsection deals with the freeness of reflexive modules. The second is about the freeness of totally reflexive modules. In Subsection 4.3, we investigate the freeness of weakly Gorenstein modules. The fourth subsection deals with the freeness of $M^\ast$ compared to the freeness of $M$. We present modern proofs of a result by Bass \cite[5.2]{bass2}. Our arguments are independent of Bass's work and appear to be original. Subsection 4.5 is devoted to the freeness of specialized (resp. generalized) reflexive modules.

In Section 5 we investigate the reflexivity of (multi)-dual modules, i.e., we study some homological aspects. For instance, see Corollary \ref{ca} and Propositions \ref{gf2} and \ref{gfu}. The new invariant is $\ell(M^{n\ast})$ and its asymptotic behavior, namely we study the existence of $\lim_{n\to\infty} \frac{\ell(M^{n\ast})}{\type(R)^{n}}$ and detect some numerical invariants when the limit exists. As a sample we present the following:

\begin{observation}
	Let $(R,\fm)$ be such that $\fm^2=0$ and $M$ be a finitely generated module with no free direct summands. Then 
	\[
	\lim_{n\to\infty} \frac{\ell(M^{n\ast})}{\type(R)^{n}}=\beta_0(M).
	\]
\end{observation}

In Section 6 we settle Conjecture 1.2. In Section 7 we deal with a question of Braun:

\begin{question}
	Let $I\lhd R$ be a reflexive ideal of a normal domain with $\operatorname{id}_R(I) < \infty$. Is $I\simeq\omega_R$?
\end{question}

It may be worth mentioning that the results of Section 7 have an essential role in the complete solution of Braun's question; see the recent preprint \cite{ufd}.

In Section 8 we descend freeness (resp. reflexivity) from the endomorphism ring to the module. This is inspired by the paper of Auslander-Goldman. Similarly, we descend some data from the higher tensor products to the module. In particular, we slightly extend some results of Vasconcelos, Huneke-Wiegand, and the recent work of \v{C}esnavi\v{c}ius. For example, see Corollary \ref{decproc1}.

Grothendieck solved a conjecture of Samuel, see Theorem \ref{groth}. In Section 9 we try to understand these miracles by looking at the mentioned result of Auslander-Goldman. We do this by using some results of Section 8. In particular, there is a connection between Problem 1.1 and the $\UFD$ property of regular (complete-intersection) rings, see Corollary \ref{regufd} as a sample. Samuel remarked that there is no symmetric analogue of Auslander's theorem on the torsion part of tensor power modules over regular rings. In this regard, we present a tiny remark:

\begin{observation}
	Let $(R,\fm)$ be a regular local ring and $M$ be of rank one. If $\Sym_n(M)$ is reflexive for some $n\geq \max\{2,\dim R\}$, then $M$ is free.
\end{observation}

Also, see Corollary \ref{ob3}.

In Section 10 we investigate the reflexivity of ideals. The motivation is a result of Bass \cite[Theorem 6.2]{bass} which says that an Artinian local ring is Gorenstein iff all of its ideals are reflexive. We remark that reflexivity of the maximal ideal does not imply reflexivity of all ideals; see Discussion \ref{n}. In particular, we connect to a recent result of Faber \cite{F}. Then, we extend the mentioned theorem of Bass by showing that an Artinian local ring is Gorenstein iff its maximal ideal is reflexive.

In the final section, we give a positive answer to some questions asked by Holanda and Miranda-Neto \cite{nano} by using freeness of some reflexive modules via some local cohomological arguments, which may initially seem unrelated. We apply this theme to present more applications to \cite{nano}. For more details, see Observations \ref{11.4}--\ref{11.8}.

We provide numerous examples and remarks, along with an extensive list of references. Despite this effort, it is possible that some excellent references have been overlooked. This also highlights the challenges of writing a comprehensive paper on reflexive modules.

\section{Reflexivity and  finiteness}

All  rings are noetherian.
Following Bass \cite{bass2}, $\mathcal{M}$ is called \textit{torsion-less} if  $\varphi_{\mathcal{M}}$ is injective
(this some times is called semi-reflexive).
A torsion-less module $\mathcal{M}$ is noetherian if and only if $\mathcal{M}^\ast$ is noetherian.
Submodules of a torsion-less module are torsion-less.
We say $\mathcal{M}$  is \textit{weakly reflexive}  if  $\varphi_{\mathcal{M}}$ is surjective.
In general, neither submodule nor quotient of a weakly reflexive module is weakly reflexive.

\begin{observation}\label{obz}
Let $(R,\fm)$ be  a zero-dimensional Gorenstein local ring.  Then the properties of weakly reflexive and finitely generated are the same.
In particular, (weakly) reflexivity is not closed under taking direct limits.
\end{observation}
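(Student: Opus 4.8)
The plan is to read the hypotheses through the lens of Matlis duality. Since $R$ is zero-dimensional and Gorenstein it is self-injective with one-dimensional socle, so $R\cong E$, the injective hull of $k$; in particular $(-)^\ast=\Hom_R(-,R)$ is the Matlis duality functor, which is exact and faithful, and $R$ is an injective cogenerator. The first payoff I would extract from this is that \emph{every} $R$-module $\mathcal M$ is torsion-less: for $0\ne x\in\mathcal M$ the cogenerator property supplies $f\in\mathcal M^\ast$ with $f(x)\ne 0$, so $\varphi_{\mathcal M}$ is injective. Hence over such $R$ a module is weakly reflexive if and only if it is reflexive, and it suffices to prove the equivalence: $\mathcal M$ is reflexive $\Leftrightarrow$ $\mathcal M$ is finitely generated.

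For the easy half I would treat finitely generated $\mathcal M$, which over the Artinian ring $R$ means finite length. The base case $\mathcal M=k$ gives $k^\ast=\Hom_R(k,R)=\Soc R\cong k$ (here $\type R=1$ is used) with $\varphi_k$ an isomorphism. Then I induct on length: given $0\to k\to\mathcal M\to\mathcal M'\to 0$, apply the exact functor $(-)^{\ast\ast}$, compare with the original sequence via the natural transformation $\varphi$, and conclude by the five lemma that $\varphi_{\mathcal M}$ is bijective. Thus finitely generated $\Rightarrow$ reflexive $\Rightarrow$ weakly reflexive.

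The converse is the heart of the matter and the main obstacle. Here I would run a cardinality argument built on two computations valid for an arbitrary module $N$, writing $V^\vee:=\Hom_k(V,k)$. (i) A homomorphism $N\to R$ killed by $\fm$ has image in $\Soc R\cong k$, so $\Soc(N^\ast)=\Hom_R(N/\fm N,\Soc R)\cong(N/\fm N)^\vee$. (ii) Dualizing $0\to\Soc N\to N\to N/\Soc N\to 0$ and using exactness yields a surjection $N^\ast/\fm N^\ast\twoheadrightarrow(\Soc N)^\vee$, hence $\dim_k N^\ast/\fm N^\ast\ge\dim_k(\Soc N)^\vee$. Now suppose $\mathcal M$ is reflexive but not finitely generated; over an Artinian local ring this forces $\tau:=\dim_k\mathcal M/\fm\mathcal M$ to be infinite. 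By (i) and the Erd\H{o}s--Kaplansky theorem, $\dim_k\Soc(\mathcal M^\ast)=\dim_k(\mathcal M/\fm\mathcal M)^\vee=|k|^{\tau}>\tau$; feeding this into (ii) applied to $N=\mathcal M^\ast$ gives $\dim_k\mathcal M^{\ast\ast}/\fm\mathcal M^{\ast\ast}\ge|k|^{\,|k|^{\tau}}$. But reflexivity $\mathcal M\cong\mathcal M^{\ast\ast}$ forces this quantity to equal $\tau$, and $|k|^{|k|^{\tau}}>\tau$ is absurd. The delicate point, and where I expect to spend the most care, is exactly this strict growth of dimension under $k$-linear dualization of infinite-dimensional spaces; note that the earlier equivalence ``torsion-less $\mathcal M$ is noetherian iff $\mathcal M^\ast$ is noetherian'' is by itself insufficient, being symmetric under passage to duals and yielding no contradiction.

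Finally the ``in particular'' is immediate: each $R^{n}$ is finitely generated, hence reflexive, yet $\vil R^{n}=R^{(\NN)}$ is not finitely generated and so, by the equivalence just established, is not weakly reflexive. Thus neither the class of weakly reflexive modules nor that of reflexive modules is closed under direct limits.
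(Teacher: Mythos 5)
Your proof is correct, but it takes a genuinely different route from the paper's on the one step that carries all the weight. Both arguments begin by showing every module is torsion-less, so that weakly reflexive and reflexive coincide: the paper gets this from Gorenstein-projectivity (every module embeds in a free module over a zero-dimensional Gorenstein ring), while you get it from $R\cong E(k)$ being an injective cogenerator --- essentially the same observation in different packaging. The real divergence is in the implication ``reflexive $\Rightarrow$ finitely generated.'' The paper simply cites Schulz's theorem that over \emph{any} commutative artinian ring every reflexive module is finitely generated; you prove this from scratch in the self-injective case via the Erd\H{o}s--Kaplansky theorem, using the identifications $\Soc(N^\ast)\cong(N/\fm N)^\vee$ and the surjection $N^\ast/\fm N^\ast\to(\Soc N)^\vee$ to force an impossible growth of $\dim_k(\mathcal M/\fm\mathcal M)$ under double dualization. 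Your argument is self-contained and exhibits exactly where infiniteness breaks reflexivity, which the paper's citation hides; the price is that it leans on exactness of $(-)^\ast$ and on $\Soc R\cong k$, so it does not generalize to arbitrary artinian rings the way Schulz's result does. Your treatment of the easy direction (length induction plus the five lemma, rather than quoting the standard fact that finitely generated modules over zero-dimensional Gorenstein rings are reflexive) and of the ``in particular'' clause matches the paper's in substance.
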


\begin{proof}Here we use the concept of Gorenstein-projective. For its definition, see \cite[4.2.1]{kris}.
Since  $R$  is zero-dimensional and Gorenstein, any module is Gorenstein-projective (see \cite[4.4.8]{kris}).
It follows by definition that any module is a submodule of a projective module.
Over local rings, and by the celebrated theorem of Kaplansky, any projective module is free.
Combining these, any module is a submodule  of a free module.
It follows by definition that any module is torsion-less.
Now, let $\mathcal{M}$ be weakly reflexive.
Thus, $\mathcal{M}$ is reflexive.
It is shown in \cite[Corollary 2.4.(4)]{sch}
that over any commutative artinian ring, every reflexive module is finitely
generated. By this, $\mathcal{M}$ is finitely
generated. Conversely, assume that $M$ is finitely
generated. Since  $R$  is zero-dimensional and Gorenstein, $M$ is
reflexive.
To see the particular case, we remark that any module can be written as a directed limit of finitely generated modules.
We use this along with the first part to get the claim.
\end{proof}

\begin{discussion}
The zero dimensional assumption is important. Indeed, by \cite[Ex. 2.8'(2)]{Lam}, $\bigoplus_\mathbb{N} \mathbb{Z}$ is reflexive.
This is one of two extra-credit exercises in the book \cite{Lam}.
\end{discussion}

 There are flat modules that are not reflexive, e.g. the vector space $\bigoplus_\mathbb{N} \mathbb{Q}$ and the abelian group $\mathbb{Q}$. In order to handle this drawback, let
 $R$ be a  normal domain of dimension bigger than zero with fraction field $Q(R)$. Following Samuel,  $\mathcal{M}$ is called
\textit{quasi-reflexive} if $\mathcal{M}=\bigcap_{\fp\in \Spec(R),\Ht(\fp)=1}\mathcal{M}_{\fp}$ where we compute the intersection in $\mathcal{M}_0:=\mathcal{M}\otimes_RQ(R)$.
Yuan proved that any flat module  is quasi-reflexive (see \cite[Lemma 2]{yuan}).

\begin{lemma}
Let $(R,\fm)$ be  a  normal domain  of dimension bigger than zero and   $\mathcal{M}$ be quasi-reflexive.  Then
there is a family of finitely generated and reflexive modules $\{M_i\}_{i\in I}$ such that
$\mathcal{M}={\varinjlim}M_i$.
\end{lemma}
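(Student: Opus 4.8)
The plan is to realize $\mathcal{M}$ as the directed union of the reflexive hulls of its finitely generated submodules. First I would record that quasi-reflexivity forces $\mathcal{M}$ to be torsion-free: the defining equality $\mathcal{M}=\bigcap_{\Ht(\fp)=1}\mathcal{M}_{\fp}$ takes place among submodules of $\mathcal{M}_0=\mathcal{M}\otimes_RQ(R)$, so the canonical map $\mathcal{M}\to\mathcal{M}_0$ is injective, and its kernel is precisely the torsion submodule. Hence $\mathcal{M}\subseteq\mathcal{M}_0$, and since $\dim R>0$ there really are height-one primes for the intersection to range over. After localizing the inclusion $\mathcal{M}\hookrightarrow\mathcal{M}_0$, each $\mathcal{M}_{\fp}$ is a submodule of $(\mathcal{M}_0)_{\fp}=\mathcal{M}_0$, so all the relevant localizations sit compatibly inside the single $Q(R)$-vector space $\mathcal{M}_0$.

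Next I would take $\{N_i\}_{i\in I}$ to be the family of all finitely generated submodules of $\mathcal{M}$, directed by inclusion (the sum of two such submodules is again finitely generated), so that $\mathcal{M}=\bigcup_i N_i$. Each $N_i$ is finitely generated and torsion-free. I then pass to reflexive hulls by setting $M_i:=N_i^{\ast\ast}$. Over a noetherian ring the dual of a finitely generated module is again finitely generated (it embeds in a finite free module via a finite presentation), so each $M_i$ is finitely generated and reflexive. The key input is the classical identification, valid over a normal domain, $M_i=\bigcap_{\Ht(\fp)=1}(N_i)_{\fp}$, the intersection being computed inside $N_i\otimes_RQ(R)\subseteq\mathcal{M}_0$.

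The intersection description now closes the argument. Since $(N_i)_{\fp}\subseteq\mathcal{M}_{\fp}$ for every height-one prime $\fp$, quasi-reflexivity gives $M_i=\bigcap_{\fp}(N_i)_{\fp}\subseteq\bigcap_{\fp}\mathcal{M}_{\fp}=\mathcal{M}$, so that $N_i\subseteq M_i\subseteq\mathcal{M}$. The same description shows $N_i\mapsto M_i$ is inclusion-preserving: if $N_i\subseteq N_j$ then $(N_i)_{\fp}\subseteq(N_j)_{\fp}$ for all $\fp$, whence $M_i\subseteq M_j$. Thus $\{M_i\}_{i\in I}$ is a directed family of finitely generated reflexive submodules of $\mathcal{M}$, and $\mathcal{M}=\bigcup_i N_i\subseteq\bigcup_i M_i\subseteq\mathcal{M}$ forces $\mathcal{M}=\bigcup_i M_i$. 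A directed union of submodules is their colimit, so $\mathcal{M}=\varinjlim_i M_i$, which is the claim.

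The finite generation of the duals and the union identity are routine. The one point deserving genuine care — and the only place the normality hypothesis is really used — is the identification $N^{\ast\ast}=\bigcap_{\Ht(\fp)=1}N_{\fp}$ for a finitely generated torsion-free module over a normal domain, since this is exactly what guarantees both that each reflexive hull $M_i$ lands inside $\mathcal{M}$ and that hull-formation respects the inclusions $N_i\hookrightarrow N_j$. I would invoke the structure theory of reflexive modules over Krull (normal) domains for this identification rather than re-derive it from scratch.
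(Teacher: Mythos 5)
Your proposal is correct and follows essentially the same route as the paper: both realize $\mathcal{M}$ as the directed union of the modules $\bigcap_{\Ht(\fp)=1}(N_i)_{\fp}$ attached to finitely generated submodules $N_i$, and both identify these with the biduals $N_i^{\ast\ast}$ (the paper derives this identification from Yuan's Proposition 1 together with the DVR localizations at height-one primes, where you invoke the classical Krull-domain description directly). The only difference is the order of presentation — you define $M_i:=N_i^{\ast\ast}$ first and then use the intersection formula, while the paper defines $M_i$ as the intersection and then proves $M_i\simeq N_i^{\ast\ast}$.
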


\begin{proof}
 There is a directed family $\{N_i\}_{i\in I}$ of finitely generated submodules of $\mathcal{M}$ such that $\mathcal{M}={\varinjlim}N_i$. Let $$\Spec^1(R):=\{\fp\in \Spec(R),\Ht(\fp)=1\}.$$
 For each $i$, we set $M_i:=\bigcap_{\fp\in \Spec^1(R)}(N_i)_{\fp}$ where we compute the intersection in $\mathcal{M}_0$. Let $\fp\in\Spec^1(R)$.
 We note that $M_i\subset(N_i)_{\fp}\subset\mathcal{M}_{\fp}$ and
 so $M_i\subset \bigcap_{\fp\in \Spec^1(R)}\mathcal{M}_{\fp}=\mathcal{M}$. Suppose $N_i\subset N_j $. It follows that $(N_i)_{\fp}\subset (N_j)_{\fp} \subset\mathcal{M}_{\fp}$.
 Consequently, $$\bigcap_{\fp\in \Spec^1(R)}(N_i)_{\fp}\subset\bigcap_{\fp\in \Spec^1(R)} (N_j)_{\fp} \subset\bigcap_{\fp\in \Spec^1(R)}\mathcal{M}_{\fp}=\mathcal{M}.$$
This says that
 $M_i\subset M_j \subset\mathcal{M}$. Let $x\in\mathcal{M}$. There is
 a $j\in I$ such that $x\in N_j$. Since $N_j\subseteq M_j$, we have $x\in M_j\subset{\varinjlim}M_j$, i.e., $\mathcal{M}\subseteq{\varinjlim}M_i$. The reverse inclusion
 holds by definition. In sum,
 $\mathcal{M}={\varinjlim}M_i$. Clearly,
  $N_i^{\ast\ast}$ is finitely generated. In view of Fact \ref{5.1v} (see below) $N_i^{\ast\ast}$ is reflexive.
 Again, let $\fp\in \Spec^1(R)$. Note that $R_{\fp}$ is a discrete valuation ring. Over  such a
 ring any torsion-free is free, and so reflexive. Since $N_i$ is finitely generated, $\Hom_R(N_i,R)$ commutes with the localization. Thus, $$(N_i)_{\fp}\simeq(N_i)_{\fp}^{\ast\ast}\simeq(N_i^{\ast\ast})_{\fp}.$$
Thanks to \cite[Proposition 1]{yuan},  $$N_i^{\ast\ast}\simeq\bigcap_{\fp\in \Spec^1(R)}(N_i^{\ast\ast})_{\fp}.$$ We put these together,
 $$N_i^{\ast\ast}\simeq\bigcap_{\fp\in \Spec^1(R)}(N_i^{\ast\ast})_{\fp}\simeq\bigcap_{\fp\in \Spec^1(R)}(N_i)_{\fp}.$$
 From this, $M_i\simeq N_i^{\ast\ast}$. In particular,
   $M_i$ is finitely generated and reflexive.
 This completes the proof.
\end{proof}

\begin{proposition}\label{pp}
Let $(R,\fm)$ be  a  regular local ring of dimension at most two.
There is no difference between flat modules and quasi-reflexive modules. In particular,
any direct limit of quasi-reflexive modules is quasi-reflexive.
\end{proposition}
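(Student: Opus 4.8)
The plan is to prove that the two classes coincide by a double inclusion, using the preceding Lemma together with the classical freeness results for reflexive modules recalled in the introduction. First I note that a regular local ring is a normal domain (indeed a $\UFD$, by Auslander--Buchsbaum), so that the notion of quasi-reflexivity is available; since that notion requires $\dim R>0$, the real content lies in the cases $\dim R=1$ and $\dim R=2$ (if $\dim R=0$ then $R$ is a field and the statement is vacuous). The inclusion ``flat $\Rightarrow$ quasi-reflexive'' is exactly Yuan's result quoted above, and it holds with no restriction on the dimension.

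For the reverse inclusion, I would start from an arbitrary quasi-reflexive module $\mathcal{M}$ and apply the Lemma to write $\mathcal{M}=\varinjlim M_i$ as a filtered colimit of finitely generated \emph{reflexive} modules $M_i$. The decisive observation is that over a regular local ring of dimension at most two every finitely generated reflexive module is free: when $\dim R=1$ the ring is a discrete valuation ring, and a finitely generated reflexive (hence torsion-free) module over it is free; when $\dim R=2$ this is precisely Serre's theorem recalled in the introduction. Thus each $M_i$ is finitely generated free, so $\mathcal{M}$ is a filtered colimit of finitely generated free modules, and by the Govorov--Lazard theorem such a colimit is flat. Hence $\mathcal{M}$ is flat, which gives the missing inclusion and the equality of the two classes.

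The ``in particular'' clause then follows formally. Having identified the quasi-reflexive modules with the flat modules, and knowing that a filtered direct limit of flat modules is again flat, any direct limit of quasi-reflexive modules is flat and therefore quasi-reflexive; this is worth recording precisely because it is not transparent from the defining intersection formula. I expect the main point to be the reduction furnished by the Lemma, realizing $\mathcal{M}$ as a filtered colimit of finitely generated reflexives, combined with the invocation of Serre's freeness theorem in dimension two; once the $M_i$ are known to be free the passage to flatness via Lazard is routine. The only subtlety to verify is that the colimit supplied by the Lemma is genuinely directed, so that the Govorov--Lazard criterion applies, and this is guaranteed by the directed construction in the Lemma's proof.
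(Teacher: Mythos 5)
Your argument is correct and follows essentially the same route as the paper: Yuan's result for the easy inclusion, the preceding Lemma to write a quasi-reflexive module as a directed colimit of finitely generated reflexive modules, Serre's freeness theorem (together with the DVR case in dimension one) to make each piece free, and the flatness of a direct limit of free modules to conclude. The only cosmetic difference is your appeal to Govorov--Lazard, which is stronger than needed here since the direction ``filtered colimit of free modules is flat'' is elementary.
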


\begin{proof}
Recall that any flat module  is quasi-reflexive.  Conversely, let $\mathcal{M}$ be quasi-reflexive. By the above lemma,
there is a family of finitely generated reflexive modules $\{M_i\}$ such that
$\mathcal{M}={\varinjlim}M_i$. By the mentioned  result of Serre, each $M_i$ is free.
Clearly, direct limit of free modules is flat. We apply this to observe that $\mathcal{M}$ is flat.
To see the particular case, we mention that  direct limit of flat modules is again flat.
\end{proof}

\begin{corollary}\label{qreff}
Let $(R,\fm,k)$ be  a normal domain  of dimension bigger than zero. The following are equivalent:
\begin{enumerate}
\item[i)] there is no difference between flat modules and quasi-reflexive modules,
\item[ii)]  $R$ is  a  regular local ring of dimension at most two.
\end{enumerate}
\end{corollary}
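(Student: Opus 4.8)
The plan is to prove Corollary \ref{qreff} as a clean biconditional, with the implication (ii) $\Rightarrow$ (i) already handed to us by the preceding Proposition. So the real content is the converse: assuming that over the normal domain $(R,\fm,k)$ of positive dimension the classes of flat and quasi-reflexive modules coincide, I want to force $R$ to be regular of dimension at most two. My first step is to record that the Proposition gives (ii) $\Rightarrow$ (i) verbatim, so I only need to argue (i) $\Rightarrow$ (ii). I would then split the target into two sub-claims: first that $R$ must be regular, and second that $\dim R \leq 2$.

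\textbf{Dimension bound.} The natural strategy is to produce a quasi-reflexive module that fails to be flat whenever $\dim R \geq 3$, or more precisely whenever $R$ is not regular of dimension at most two. The cleanest handle is the observation underlying the earlier Lemma: every finitely generated reflexive module is quasi-reflexive (being isomorphic to $\bigcap_{\fp\in\Spec^1(R)}M_\fp$ by the cited \cite[Proposition 1]{yuan}). Hence under hypothesis (i) every finitely generated reflexive module is flat, and since it is finitely generated over a local ring it is in fact \emph{free}. So (i) forces: \emph{every finitely generated reflexive $R$-module is free}. This is exactly the hypothesis in Serre's/Ramras-type freeness statements, and the converse direction of Serre's theorem (or a standard syzygy argument) pins down the ring: over a normal domain, freeness of all finitely generated reflexive modules is equivalent to $R$ being regular of dimension at most two. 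Concretely, I would invoke that second syzygies are reflexive, so global freeness of reflexives forces every module to have a free second syzygy, i.e. $\gd R \leq 2$, giving both regularity and $\dim R = \gd R \leq 2$.

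So the skeleton is: (i) $\Rightarrow$ every finitely generated reflexive module is flat $\Rightarrow$ (being f.g.\ over a local ring) every such module is free $\Rightarrow$ every second syzygy module is free $\Rightarrow$ $\gd R \leq 2$ $\Rightarrow$ $R$ is regular with $\dim R \leq 2$, which is (ii). The step I expect to be the genuine obstacle is passing from ``all f.g.\ reflexives are free'' to the global dimension bound: I need that every module (equivalently, $k$) has a reflexive second syzygy, which holds because for any module its second syzygy in a free resolution embeds in a free module with torsion-free cokernel over a domain, hence is torsion-less, and over a normal domain a second syzygy is reflexive. Care is required here about the positive-dimensionality hypothesis and the normality assumption, which is precisely what makes second syzygies reflexive rather than merely torsion-less; I would state this as the key reduction and cite the standard fact (e.g.\ via the depth/reflexivity criterion, Serre's conditions $(S_2)$ together with $(R_1)$ for the normal domain) before concluding $\gd R \leq 2$ and reading off (ii).
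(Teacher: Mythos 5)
Your proposal is correct and follows essentially the same route as the paper: for (i) $\Rightarrow$ (ii) the paper also observes that the second syzygy of $k$ is reflexive (hence quasi-reflexive, hence flat by hypothesis, hence free since it is finitely generated over a local ring), which gives $\pd(k)\leq 2$ and so regularity of dimension at most two; your detour through ``all finitely generated reflexives are free'' is only a cosmetic generalization of the same chain. The direction (ii) $\Rightarrow$ (i) is, as you say, the preceding Proposition.
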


\begin{proof}$i)\Rightarrow ii)$: The second syzygy of $k$ is reflexive and so quasi-reflexive. By the assumption it is flat. Finitely generated flat modules
over local rings are free. Thus,
second syzygy of $k$ is free. This in turn is equivalent with $\pd(k)\leq 2$.
In view of local-global-principle, $R$ is    regular and of dimension at most two.

$ii)\Rightarrow i)$: This is in   Proposition \ref{pp}.
\end{proof}

In the next section we recall the concept of weakly Gorenstein for finitely generated  modules over local rings
and in \S 4 we study their freeness. It seems the origin of this comes back to 1950 when  Whitehead
 posed a problem: Let $G$ be an abelian group such that $\Ext^{+}(G,\mathbb{Z})=0$. Is $G$ free?
Shelah \cite{sh}  proved that this is undecidable in ZFC. By ZFC we mean the Zermelo--Fraenkel set theory with the axiom of choice included. There are a lot of notions between
 free abelian groups and reflexive abelian groups if we have no finiteness restrictions:
 
 \[\begin{array}{lllllll}
 &&\textmd{free}\Rightarrow\textmd{hereditarily separable}\Rightarrow
 \textmd{coseparable}\Rightarrow\textmd{separable}\Rightarrow\aleph_1\textmd{-free} \Rightarrow\textmd{torsionless},
 \end{array}\]
 and that $$\aleph_1 \textmd{-coseparable} \Rightarrow\textmd{reflexive}\Rightarrow\textmd{torsionless}.$$ To see these, we cite the book \cite{ek}, and record a question: \begin{question}Let $G,H$ be reflexive abelian groups. Is $G\otimes_\mathbb{Z} H$ reflexive?
 \end{question}   

 Let us connect to \cite{s} and cite the following  analytic theme of reflexivity:
 \begin{fact} Let $V$ be a Banach space. Then $\Hom(V, \mathbb{R})$ is a Smith space. If $W$ is
a Smith space, then $\Hom(W,  \mathbb{R})$ is a Banach space, where in both cases we endow the dual space
with the compact-open topology,  with the corresponding bi duality maps are isomorphisms.\end{fact}

\begin{conjecture}(Shelah \cite[Conjecture 1.6]{shref})
There are reflexive groups of any cardinality.
\end{conjecture}
\section{Homological reflexivity}

Here, modules are finitely generated.
We  recall different notions of reflexivity (our reference book on this topic is \cite{kris}).
A reflexive module $M$ is called \textit{totally reflexive} if $\Ext^i(M,R)=\Ext^i(M^\ast,R)=0$ for all $i>0$.
By $\mu(M)$ we mean the minimal number of generators of $M$.

\begin{observation}\label{m} Let $(R,\fm)$ be  a  local ring such that $\fm^2=0$.  Then any finitely generated weakly reflexive module is  totally  reflexive.\end{observation}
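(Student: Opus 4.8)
The plan is to reduce everything to the behaviour of syzygies over a ring with $\fm^2=0$ and then to split according to whether $R$ is Gorenstein. First I would record the two structural inputs. Since $\fm^2=0$, the ring $R$ is Artinian, and for any finitely generated module $L$ a first syzygy $\Omega L$ lies inside $\fm F$ for a free cover $F$; as $\fm\cdot\fm F=0$ this forces $\fm\cdot\Omega L=0$, so every (first and hence higher) syzygy is a $k$-vector space $\Omega L\cong k^{s}$. Second, set $e:=\dim_k\fm=\emd(R)$; a short computation with $0\to\fm\to R\to k\to 0$, using $\Hom_R(\fm,R)\cong k^{e^{2}}$ and the fact that the restriction map $R\to\Hom_R(\fm,R)$ has one-dimensional image, gives $\Ext^1_R(k,R)\cong k^{\,e^{2}-1}$. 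Thus $\Ext^1_R(k,R)=0$ precisely when $e\le1$. Since $\fm^2=0$ gives $\Soc R=\fm$, Gorensteinness of $R$ is equivalent to $e\le1$, so this dichotomy is exactly the Gorenstein/non-Gorenstein split.

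If $R$ is Gorenstein (the case $e\le1$) then, being Artinian, every finitely generated module has depth $0=\dim R$, hence is maximal Cohen--Macaulay and therefore totally reflexive; in particular the given $M$ is totally reflexive and there is nothing further to prove. This is the same mechanism exploited in Observation \ref{obz}, and it is where the hypothesis is genuinely needed: $k$ over $k[x]/(x^2)$ is reflexive (so weakly reflexive) but not free.

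The heart of the argument is the non-Gorenstein case $e\ge2$. Here I would pass to the Auslander--Bridger transpose $N:=\operatorname{Tr}M$ and invoke the standard exact sequence
$$0\to\Ext^1_R(N,R)\to M\xrightarrow{\varphi_M}M^{\ast\ast}\to\Ext^2_R(N,R)\to 0,$$
so that weak reflexivity of $M$ (surjectivity of $\varphi_M$) is exactly the vanishing $\Ext^2_R(N,R)=0$. Dimension shifting along $0\to\Omega N\to F\to N\to0$ with $F$ free gives $\Ext^2_R(N,R)\cong\Ext^1_R(\Omega N,R)$, and by the first structural input $\Omega N\cong k^{s}$, whence $\Ext^2_R(N,R)\cong\Ext^1_R(k,R)^{\oplus s}$. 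Because $e\ge2$ forces $\Ext^1_R(k,R)\neq0$, the vanishing forces $s=0$, i.e. $\Omega N=0$ and $\operatorname{Tr}M$ is free. Finally, from a minimal presentation $R^a\xrightarrow{A}R^b\to M\to0$ one has $\operatorname{Tr}M=\coker(A^{T})$ with $\im(A^{T})\subseteq\fm R^a$, and such a cokernel is free only when $\im(A^{T})=0$, i.e. when $A=0$; hence $M$ itself is free, and therefore totally reflexive.

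The step I expect to require the most care is the translation of the hypothesis through the transpose: one must use the correct form of the Auslander--Bridger four-term sequence and keep in mind that $\operatorname{Tr}M$ is only well defined up to free summands, so that ``$\operatorname{Tr}M$ is free'' must be checked to be genuinely equivalent to ``$M$ is free'' in this $\fm^2=0$ setting (this is what the presentation-matrix argument secures). By contrast, the identity $\Ext^1_R(k,R)\cong k^{\,e^{2}-1}$ and the collapse of all syzygies to $k$-vector spaces are routine once $\fm^2=0$ is exploited. The real content is isolating weak reflexivity as the single vanishing $\Ext^2_R(\operatorname{Tr}M,R)=0$ and recognising that, away from the Gorenstein locus, this lone condition already annihilates the syzygy of the transpose and drives $M$ to be free.
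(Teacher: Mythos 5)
Your proof is correct, and its skeleton coincides with the paper's: the same dichotomy ($R$ Gorenstein, i.e.\ $\mu(\fm)\le 1$, versus not), and the same identification of $\coker(\varphi_M)$ with $\Ext^2_R(\operatorname{Tr}M,R)$ via the Auslander--Bridger four-term sequence. Where you genuinely diverge is in the final step. The paper disposes of the non-Gorenstein case by citing Menzin's theorem (equivalently, the $\BNSI$ machinery of \S 4: over these rings $\Ext^i_R(-,R)=0$ for a single $i\ge 2$ forces freeness), whereas you prove exactly the instance of that result you need from scratch: dimension-shifting $\Ext^2_R(\operatorname{Tr}M,R)$ down to $\Ext^1_R(\Omega\operatorname{Tr}M,R)$, observing that $\fm^2=0$ collapses the minimal syzygy $\Omega\operatorname{Tr}M$ to $k^s$, computing $\Ext^1_R(k,R)\cong k^{e^2-1}\neq 0$ for $e\ge 2$ to force $s=0$, and then reading freeness of $M$ off the minimal presentation matrix. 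This buys a self-contained, elementary argument at the cost of a few lines; the paper's route buys brevity and places the observation inside the broader Betti-number framework it develops later. Your attention to the fact that $\operatorname{Tr}M$ is only defined up to free summands, resolved by working with the minimal presentation throughout, is exactly the right precaution. The only cosmetic blemish is that the formula $\Ext^1_R(k,R)\cong k^{e^2-1}$ degenerates when $e=0$ (i.e.\ $R$ a field), but that case is absorbed into your Gorenstein branch, so nothing is lost.
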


\begin{proof}
Let $M$ be weakly reflexive. Suppose first that $\mu(\fm)=1$. It follows that $R$ is zero-dimensional Gorenstein ring. Over such a ring any finitely generated module
is totally reflexive. Suppose now that $\mu(\fm)>1$. Let $\D(-)$ be the Auslander transpose. The  cokernel of $(-)\to(-)^{\ast\ast}$ is $\Ext^2_R(\D(-),R)$.
Since $M$ is weakly reflexive, $\Ext^2_R(\D(M),R)=0$. Menzin proved that $\Ext^i_R(\D(M),R)=0$ for some $i>1$ is equivalent
to the freeness (see \S 4 for more details).
\end{proof}
Recall that a module $M$ is called \textit{weakly Gorenstein}
if $\Ext^i_R(M,R)=0$  for all $i>0$, see e.g. \cite{mas}.

\begin{definition}
	A  local ring $(R,\fm)$ is called  \it{quasi-reduced} if it satisfies  Serre's condition $(\Se_1)$ and be generically Gorenstein.
\end{definition}
 
\begin{remark}
	Let $(R,\fm)$ be a quasi-reduced local ring. The following are equivalent:
	\begin{enumerate}
		\item[i)] any reflexive module is totally reflexive,
		\item[ii)] any reflexive module is  weakly Gorenstein,
		\item[iii)]  $R$ is a Gorenstein ring of dimension at most two.
	\end{enumerate}
\end{remark}
\begin{proof}$i)\Rightarrow ii)$:  This is clear.

	$ii)\Rightarrow iii)$: Let $M$ be reflexive.  By a result of Hartshorne (see Fact \ref{mas} below) $M^\ast$ is reflexive. By our assumption, $M$ and $M^\ast$ are weakly Gorenstein. By definition,
	$\Ext^+_R(M,R)=\Ext^+_R(M^\ast,R)=0$. Thus,
	 $M$ is totally reflexive. 
	 Over quasi-reduced rings, second syzygy modules are reflexive.
	From  this, $\Syz_2(k)$ is reflexive. We proved that $\Syz_2(k)$ is totally reflexive.
	By this, $R$ is a Gorenstein ring
	of dimension at most two.

	$iii)\Rightarrow i)$: This is clear by Auslander-Bridger formula.
\end{proof}

The quasi-reduced assumption is important, see Observation 4.1.
Also, we say
 $M$ is  \textit{skew Gorenstein} if $\Ext^i_R(M^\ast,R)=0$ for all $i>0$.

\begin{definition}
We say $M$ is \textit{homologically reflexive} if $\Ext^i(M,R)=\Ext^i(M^\ast,R)=0$ for all $i>0$.
Also, $M$ is called   strongly reflexive if $\Ext^i_R(\D(M),R)=0$   for all $i>0$ (in the common terminology: $M$ is n-torsionless for all $n$).
\end{definition}

\begin{example}\label{2.5}
Let  $(R,\fm,k)$ be zero-dimensional but not Gorenstein. Then $k$ is torsion-less but neither reflexive nor weakly Gorenstein.
 \end{example}

 \begin{proof}
 Since   $R$ is not Gorenstein, $\dim_k(\Soc(R))>1$. Also, $0\neq\Soc(R)$  is equipped with a  vector space structure. The same thing holds for its dual.
In particular, the natural embedding   $k\hookrightarrow\Soc(R)^\ast=k^{\ast\ast}$ is not surjective. By definition, $k$ is torsion-less and  $k$ is not reflexive. Suppose on the contrary that $k$ is weakly Gorenstein.  The condition $\Ext^i_R(k,R)=0$ for all $i>0$  implies that $R$ is Gorenstein. This  is excluded from the assumption. So, $k$ is not weakly Gorenstein.
\end{proof}

\begin{definition}
An $R$-module $M$ is called  infinite syzygy if there is an exact sequence
$$	0 \lo M \lo F_0 \lo \ldots \lo F_n \lo F_{n+1} \lo \ldots$$
	where $F_i$
	is free.
\end{definition}

We note that  any module is torsion-less if and only if any module is  infinite syzygy.
Over artinian rings we can talk a little more:

\begin{corollary}\label{INF}Let $(R,\fm,k)$ be local and zero-dimensional. The following are equivalent:
		\begin{enumerate}
		\item[i)]  any finitely generated module is torsion-less,
		\item[ii)] any finitely generated module is  infinite syzygy,
		\item[iii)]  the canonical module $\omega_R$ is torsion-less,
		\item[iv)]  $R$ is a Gorenstein ring.
	\end{enumerate}
\end{corollary}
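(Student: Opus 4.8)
The plan is to prove the cycle $i)\Rightarrow iii)\Rightarrow iv)\Rightarrow i)$ together with the equivalence $i)\Leftrightarrow ii)$, which is essentially the remark preceding the statement. For $i)\Leftrightarrow ii)$, I would argue that if every finitely generated module is torsion-less, then starting from $M$ I embed $M\hookrightarrow F_0$ into a finite free module, observe that the cokernel $F_0/M$ is again finitely generated and hence torsion-less, embed it into $F_1$, and iterate; splicing these short exact sequences yields the coresolution $0\to M\to F_0\to F_1\to\cdots$, so $M$ is an infinite syzygy. The converse is immediate, since the first map of such a coresolution exhibits $M$ as a submodule of a free module, i.e. $M$ is torsion-less. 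The implication $i)\Rightarrow iii)$ needs no work: $\omega_R$ has finite length (equal to that of $R$), so it is finitely generated and therefore torsion-less by hypothesis.

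The heart of the argument is $iii)\Rightarrow iv)$. Here I would use that over a zero-dimensional local ring the canonical module is the injective hull $\omega_R=E_R(k)=\Hom_k(R,k)$, and in particular $\omega_R$ is an injective $R$-module. If $\omega_R$ is torsion-less it embeds into some finite free module $R^n$; since $\omega_R$ is injective this monomorphism splits, so $\omega_R$ is a direct summand of $R^n$, hence projective, hence free by Kaplansky's theorem over the local ring $R$. Finally, Matlis duality preserves length, so $\operatorname{length}_R(\omega_R)=\operatorname{length}_R(R)$; writing $\omega_R\simeq R^m$ then forces $m=1$, whence $\omega_R\simeq R$, which is exactly the statement that the zero-dimensional ring $R$ is Gorenstein.

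For $iv)\Rightarrow i)$ I would invoke self-injectivity: when $R$ is zero-dimensional Gorenstein, $\omega_R\simeq R$, so $R$ is an injective cogenerator with $\dim_k\Soc(R)=1$, and every module embeds into a product of copies of $R$ and is therefore torsion-less. This is already recorded inside the proof of Observation \ref{obz}; alternatively one can argue directly that the Matlis dual $\Hom_R(-,\omega_R)$ coincides with $(-)^\ast$, so that biduality $\varphi_M\colon M\to M^{\ast\ast}$ is an isomorphism for every finitely generated $M$ and such $M$ is even reflexive. The step I expect to require the most care is $iii)\Rightarrow iv)$, specifically the identification of $\omega_R$ with an injective module (so that the embedding into a free module splits) and the length bookkeeping that pins the free rank down to exactly one; the remaining implications are formal.
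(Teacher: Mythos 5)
Your proof is correct, but for the key implication $iii)\Rightarrow iv)$ you take a genuinely different and more self-contained route than the paper. The paper first proves $iii)\Rightarrow i)$ directly (any module embeds into an injective module, every injective is a direct sum of copies of $E(k)=\omega_R$, and the hypothesis puts $\omega_R$ inside a free module), and then obtains $iv)$ by invoking Proposition \ref{iyengar}, whose proof in turn rests on the cited result of Miller--Rahmati--Striuli that a residue field which is an infinite syzygy forces type one, together with Yoshino's recent theorem on infinite syzygies over generically Gorenstein rings. You instead exploit the injectivity of $\omega_R=E(k)$ at the outset: a torsion-less finitely generated module embeds into a finite free module, injectivity splits that embedding, so $\omega_R$ is a free direct summand of $R^n$, and the Matlis-duality length count $\ell(\omega_R)=\ell(R)$ pins the rank to one, giving $\omega_R\simeq R$ and hence Gorensteinness by elementary means. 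This buys independence from the external citations and makes $iii)\Rightarrow iv)$ the engine of the whole equivalence (with $iv)\Rightarrow i)$ then handled by the same self-injectivity observation the paper records in Observation \ref{obz}), whereas the paper's arrangement has the virtue of showing that $i)$, $ii)$, $iii)$ are equivalent by a single soft embedding argument before Gorensteinness enters at all. Your spelled-out splicing argument for $i)\Rightarrow ii)$ (embed, pass to the cokernel, iterate) is exactly the content of the unproved remark preceding the corollary, and is worth having on record.
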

\begin{proof} 
	 $i)\Rightarrow ii) \Rightarrow iii)$: These are trivial.
	 
 $iii) \Rightarrow i)$: Recall that $\omega_R=E(k)$. Note that any module
	 can be embedded into  injective modules.
	 Any injective module is a direct sum of $E(k)$. By assumption $\omega_R$ is torsion-less.
	 Thus, $\omega_R$ is submodule of a free module.
	 It follows that any  module can be embedded into   free modules.
	 Therefore, any finitely generated module is torsion-less.

	  $ii) \Leftrightarrow iv)$: See \cite[Proposition 2.9]{miller}. Here, we present a shorter argument. Suppose $\omega_R$ is torsionless. So, it is  a submodule of a free module $F$. Indeed, $0\to\omega_R\stackrel{\subseteq}\lo F\to\coker({\subseteq})\to 0$ splits, as $\omega$ is injective. So,  $\omega$ is free. Consequently, $R$ is Gorenstein.
\end{proof}
\begin{question}(Iyengar)
	Suppose  $k$ is an infinite syzygy. Is $R$ zero-dimensional and Gorenstein?
\end{question}
\section{Being free}
This section is divided into 5 subsections: 
 The first deals with freeness of reflexive modules.  
The second subsection  is about of freeness of  totally reflexive modules.
The third subsection  investigates  freeness  of weakly Gorenstein modules.
 4.4 deals with freeness  of $M^\ast$ against the freeness of $M$.
The final subsection  is devoted to the   freeness  of specialized (resp. generalized)  reflexive  modules.

\subsection{Freeness of reflexive modules}We present three different arguments for Lam's prediction.
One of them implicitly is in the PhD thesis of Gover \cite{gover}. 
\begin{observation}\label{lamnew}
	Let $k$ be an algebraically closed field and
	let  $R:=\frac{k[X_,Y]}{(X,Y)^2}$.
	Then the only  examples of nonzero reflexive modules
	are the free modules $R^\ell$ for some $\ell\in \mathbb{N}$.
\end{observation}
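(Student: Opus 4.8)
The plan is to show that over $R=\frac{k[X,Y]}{(X,Y)^2}$ every nonzero finitely generated reflexive module is free. Since $R$ is a zero-dimensional local ring with $\fm^2=0$, I can bring in the structural results already assembled in the excerpt. The key observation is that $R$ is \emph{not} Gorenstein: its maximal ideal $\fm=(x,y)$ is a two-dimensional $k$-vector space annihilated by $\fm$, so $\Soc(R)=\fm$ has $\dim_k=2>1$. Thus I cannot expect the naive ``zero-dimensional Gorenstein'' machinery to apply directly, and the whole content is to see that reflexivity is a severe enough constraint to force freeness despite the failure of the Gorenstein property.

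First I would invoke Observation \ref{m}: since $\fm^2=0$, any finitely generated weakly reflexive (in particular reflexive) module $M$ is totally reflexive, so $\Ext^i_R(M,R)=\Ext^i_R(M^\ast,R)=0$ for all $i>0$. Over an Artinian local ring totally reflexive modules have Gorenstein dimension zero; but the point to exploit is the explicit structure of $R$. Because $\fm^2=0$, any finitely generated $M$ decomposes up to free summands into a module minimally generated with relations landing in $\fm F$, and the totally reflexive condition $\Ext^1_R(M,R)=0$ forces the presentation matrix to have entries that cannot produce nontrivial socle obstructions. The cleanest route is via the Auslander transpose $\D(M)$: the cokernel of $\varphi_M$ is $\Ext^2_R(\D(M),R)$, and Menzin's result — already cited in the proof of Observation \ref{m} — says that the vanishing of $\Ext^i_R(\D(M),R)$ for a single $i>1$ is equivalent to the freeness of $M$ over such rings. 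Since $M$ reflexive gives $\Ext^2_R(\D(M),R)=0$, Menzin's theorem delivers freeness immediately.

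Concretely, I would argue as follows. Let $M$ be reflexive and write a minimal free presentation $R^m \xrightarrow{A} R^n \to M \to 0$ with $A$ having entries in $\fm$ (minimality). The transpose $\D(M)$ is the cokernel of $A^{t}$, and one computes $\Ext^1$ and $\Ext^2$ of $\D(M)$ from the dual complex. Using $\fm^2=0$, every such $\Ext$ group is a $k$-vector space whose dimension is governed by the ranks of $A$ modulo $\fm$ and of the induced maps on socles. The reflexivity hypothesis forces $\Ext^2_R(\D(M),R)=0$, and Menzin's criterion then identifies this with $A=0$, i.e. $M\cong R^n$ is free. This also explains the role of the hypothesis $\mu(\fm)>1$ separating the Gorenstein case $\mu(\fm)=1$ treated separately in Observation \ref{m}.

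The main obstacle will be the input from Menzin, namely the precise statement that a single vanishing $\Ext^i_R(\D(M),R)=0$ with $i>1$ is equivalent to freeness over a local ring with $\fm^2=0$ that is not a hypersurface; this is the nontrivial homological fact driving everything, and I would either cite it cleanly (as the excerpt promises to explain ``in \S4'') or reprove it by the explicit socle computation sketched above. A secondary subtlety is the claim that these are the \emph{only} reflexive modules, i.e. ensuring no non-free reflexive module is overlooked: this is exactly the content of the equivalence, so once freeness of every reflexive module is established, the classification ``reflexive $\iff$ free $R^\ell$'' follows. I expect the explicit matrix/socle bookkeeping to be routine once the reduction to Menzin's criterion is in place, so the real work is organizing that reduction, which the algebraically closed hypothesis on $k$ streamlines by guaranteeing $R/\fm\cong k$ coincides with the residue field used throughout.
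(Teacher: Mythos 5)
Your argument is correct in substance, but it is a genuinely different proof from the one the paper gives for this Observation. The paper's proof here is representation-theoretic: it invokes the Kronecker classification of indecomposable finitely generated modules over $k[X,Y]/(X,Y)^2$ (in Hartshorne's presentation, the modules $M_n$, $N_{n,p}$, $W_n$), and then checks by explicit dimension counts of duals that none of the non-free indecomposables is reflexive. That is precisely where the hypothesis $k=\overline{k}$ enters. Your route --- reflexive $\Rightarrow$ weakly reflexive $\Rightarrow$ $\Ext^2_R(\D(M),R)=0$ via the Auslander--Bridger exact sequence, then Menzin's criterion (equivalently, Ramras's Fact \ref{www} for $\BNSI$ rings, since $\fm^2=0$ and $\mu(\fm)=2>1$) --- is exactly the mechanism the paper deploys elsewhere, in Observation \ref{m} and Corollaries \ref{lam} and \ref{m2COR}, as its \emph{third} proof of Lam's prediction. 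What your approach buys is generality: it needs no hypothesis on $k$ at all (the paper's Discussion 4.3 makes the same point), and it extends verbatim to $\fm^2=0$ with $\mu(\fm)>1$; what it costs is that all the weight rests on the nontrivial homological input from Menzin/Ramras, whereas the paper's classification proof is self-contained linear algebra once Kronecker's list is accepted. Two small points to tighten: (1) the statement concerns arbitrary reflexive modules, not a priori finitely generated ones, so you should open, as the paper does, by citing Schulz's theorem that reflexive modules over a commutative artinian ring are finitely generated; (2) your closing remark that algebraic closedness ``streamlines'' the argument is misleading --- in your approach it plays no role whatsoever, and saying so explicitly would be a feature, not a bug.
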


\begin{proof}   Let $\mathcal{M}$ be reflexive. Recall that over any commutative artinian ring, every reflexive module is finitely
	generated. Hence, $\mathcal{M}$ is finitely generated. Without loss of the generality we may assume that $\mathcal{M}$ is indecomposable.
	Several years ago, these modules were classified by  Kronecker. We use a presentation given by Harthshorne    (see  \cite[Page 60]{har2}).  We  find  that  any  indecomposable  finitely  generated   is isomorphic  to  one of the following: 
	\begin{enumerate}
		\item[i)]  $M_n:=\fm^{n-1}/ \fm^{n+1}$,  
		\item[ii)] $N_{n,p}:=M_n/ (p^{n})$ where $p\in\PP^1_k$ representing a linear  form $p=ax+by$  with $a,b\in k$, or
		\item[iii)] $W_n=M_n/ (x^n,y^n )$.
	\end{enumerate}
	Note that $M_1=R$ (resp. $M_{>2}=R$) which is free. Also,  $M_2=\fm$. We claim that $\fm$ is not reflexive. Indeed, $\fm$ is a vector space of dimension $2$.
	It follows that  $\fm^{\ast}$ is a vector space of dimension $4$ and so $\fm^{\ast\ast}$ is a vector space of dimension $8$.  From this,
	$\fm$ is not reflexive. Now, we compute $N_{1,p}^{\ast\ast}$. To this end,
	$$N_{1,p}^{\ast}=\{r\in R:rp=0\}=\fm.$$ Hence $N_{1,p}^{\ast}\cong k\oplus k$. 
Since $p\in\fm$ and $\fm^2=0$, we see $N_{1,p}=R/(p)$ is a vector space and  $\dim_k(N_{1,p})=2$. From this, $$N_{1,p}^{\ast\ast}=\fm^\ast=k^{4\oplus}\supsetneqq N_{1,p}.$$This implies that  
	$N_{1,p}$  is not reflexive. The same proof shows that $N_{2,p}$  is not reflexive.
	Recall that $W_1=k$. Also,  $W_1^{\ast\ast}=k^{4\oplus}\supsetneqq k =W_{1}$ and so 
	$W_{1}$  is not reflexive. The same proof shows that $W_{2}$  is not reflexive.
\end{proof}

The following is the second proof of Lam's prediction and   is implicitly in Gover's thesis\footnote{His beautiful argument appeared in the proof of the following fact: Let $(S,\fn)$ be a regular local ring of dimension 2. Then $R:=S/ \fn^2$ is not Gorenstein. One may prove this by only saying that type of $R$ is two.}:

\begin{discussion}
 Let $k$ be any field and
	let  $R:=\frac{k[X_,Y]}{(X,Y)^2}$.
	Then the only  examples of nonzero reflexive modules
	are the free modules $R^\ell$ for some $\ell\in \mathbb{N}$.
\end{discussion}

\begin{proof} 
	Let $M$ be reflexive. Since the ring is artinian,   we may assume that $M$ is finitely generated.  It is easy to show that any finitely generated module such as  $M$ can be written
	$	M = M_t\oplus F$ where $F$ is free and $M_t^\ast=\Hom(M_t,\fm) $. Since $\fm^2=0$, $\fm$ is a vector space of dimension $2$. We apply this observation to see
	$$M_t^\ast=\Hom(M_t,\fm)=\oplus_2 \Hom(M_t,k).$$ Take another duality,
	$$M_t^{\ast\ast}=\Hom(M_t,\fm)^\ast=\oplus_2 \Hom(M_t,k)^\ast=\oplus_2 \Hom(\Hom(M_t,k),\fm)=:\oplus_2 \Hom(V,\fm)$$since $V:=\Hom(M_t,k)$ does not contain a free direct summand. Hence  $M_t^{\ast\ast}= \oplus_4 \Hom(V,k)$. Following Gover, we 
	take  another duality,
	and  this shows that   $$M_t^{\ast\ast\ast}= \oplus_8 \Hom_k(\Hom_k(V,k),k).$$
	Direct summand and dual of reflexive modules is again reflexive. So,  $M_t^{\ast\ast\ast}\cong M_t^\ast$. From this we get that $$2\dim V=8\dim V.$$ In other words $V=0$,
	and consequently $M_t^\ast=0$. Take another duality, $M_t\cong M_t^{\ast\ast}=0$. In particular, $M$ is free.
\end{proof}
Recall that the $i^{th}$ betti number of $M$ is given by $\beta_i(M):=\dim_k\Tor^R_i(k,M)$.
Suppose $x\in \fm$ is nonzero. Note that $R\stackrel{x}\lo R\to R/xR\to 0$ is a part of minimal free resolution.
By definition, $\beta_0(R/xR)=\beta_1(R/xR)$. By this reason, a ring is called $\BNSI$  if  for every non-free module $M$ we have $\beta_{i}(M)>{\beta_{i-1}}(M)$ for  $i> 1$.
We will use the following result several times:

\begin{fact}\label{www}(See \cite[2.4 and 2.5]{ram})
Let $R$ be $\BNSI$. Suppose $\Ext^i_R(M,R)=0$ for some $i\geq 2$. Then $M$ is free.
Also, finitely generated weakly reflexive modules are free.
\end{fact}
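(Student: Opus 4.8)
The plan is to derive everything from the first assertion, that $\Ext^i_R(M,R)=0$ for some $i\ge 2$ forces $M$ to be free, and then to obtain the weakly reflexive case as a corollary. For that corollary, recall from the proof of Observation~\ref{m} that $\coker(\varphi_M)=\Ext^2_R(\D(M),R)$. If $M$ is weakly reflexive then $\varphi_M$ is surjective, so $\Ext^2_R(\D(M),R)=0$; applying the first assertion with $i=2$ to the finitely generated module $\D(M)$ shows $\D(M)$ is free. Since the Auslander transpose of a free module is $0$ and $\D\D(M)$ is stably isomorphic to $M$, this makes $M$ stably free, hence free over the local ring $R$. Thus the whole statement reduces to the Ext-vanishing claim.

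To prove that claim I would first note that over a $\BNSI$ ring every module of finite projective dimension is free: a non-free $M$ has $\beta_1(M)\ge 1$, and strict increase then forces $\beta_j(M)\ge 1$ for all $j\ge 1$, so its resolution cannot terminate. Using this I can shift syzygies, replacing $M$ by $P:=\Syz_{i-2}(M)$, for which $\Ext^2_R(P,R)\cong\Ext^i_R(M,R)=0$ and $P$ is free exactly when $M$ is; so I may assume $i=2$. Now set $N:=\Syz_1(M)$, a torsion-less first syzygy with $\Ext^1_R(N,R)\cong\Ext^2_R(M,R)=0$, and dualize its minimal free resolution $\cdots\to F_3\xrightarrow{d_3}F_2\xrightarrow{d_2}F_1\to N\to 0$. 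The vanishing of $\Ext^1_R(N,R)$ says the dual complex is exact at $F_2^\ast$, so the transpose $\D(N)=\coker(d_2^\ast)$ maps isomorphically onto $\im(d_3^\ast)\subseteq F_3^\ast$ and is in particular torsion-less. From the minimal presentation $F_1^\ast\xrightarrow{d_2^\ast}F_2^\ast\to\D(N)\to 0$ I read off $\beta_0(\D(N))=\beta_2(M)$ and $\beta_1(\D(N))\le\beta_1(M)$, while dualizing this presentation identifies $\D(N)^\ast=\ker(d_2)=\Syz_2(N)$, so that $\mu(\D(N)^\ast)=\beta_2(N)=\beta_3(M)$.

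The last step is to turn these numerical data into freeness, and this is where I expect the real obstacle. If $M$ were non-free, $\BNSI$ applied to $M$ gives $\beta_1(M)<\beta_2(M)$ and $\beta_2(M)<\beta_3(M)$, hence $\beta_1(\D(N))<\beta_0(\D(N))$ and $\mu(\D(N))<\mu(\D(N)^\ast)$: the torsion-less module $\D(N)$ drops in Betti number at the very first step while its minimal number of generators strictly increases under dualization. The difficulty is that $\BNSI$ only constrains Betti numbers in degrees $\ge 1$, so neither feature is contradictory on its own --- over a two-dimensional regular local ring, which is not $\BNSI$, the maximal ideal is a reflexive module with Betti sequence $2,1,0,\dots$ exhibiting exactly such a first-step drop. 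Bridging this gap is precisely the content of Ramras's finer analysis: one must propagate the strict-increase condition through the transpose, using that the higher syzygies of $\D(N)$ are (up to free summands) those of $N^\ast$ together with the identities $\beta_0(\D(N))=\beta_2(M)$ and $\D(N)^\ast=\Syz_2(N)$, to conclude that a nonfree $\D(N)$ is impossible. Once $\D(N)$ is forced to be free, the stable isomorphism $\D\D(N)\approx N$ makes $N$ free and hence $M$ free, which completes the proof.
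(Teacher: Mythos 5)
The paper gives no argument for this Fact at all --- it is quoted from Ramras \cite[2.4 and 2.5]{ram} --- so your attempt has to stand on its own, and it does not quite: you state explicitly that the last step is ``where I expect the real obstacle'' and defer it to ``Ramras's finer analysis.'' That is a genuine gap. Everything before it is sound: the reduction of the weakly reflexive assertion to the Ext-vanishing assertion via $\coker(\varphi_M)=\Ext^2_R(\D(M),R)$ is correct; so is the observation that over a $\BNSI$ ring finite projective dimension forces freeness, which legitimizes the syzygy shift to $i=2$; and the numerical bookkeeping for $\D(N)$ with $N=\Syz_1(M)$ is accurate. But you then try to extract a contradiction from the pair $(\beta_0(\D(N)),\beta_1(\D(N)))$, degrees in which the $\BNSI$ condition says nothing, and the argument stops there.

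The missing move is simply to apply $\BNSI$ one step further along the dualized resolution, where the same two numbers reappear in degrees $1$ and $2$. Write the minimal resolution of $M$ as $\cdots\to F_3\xrightarrow{d_3}F_2\xrightarrow{d_2}F_1\xrightarrow{d_1}F_0\to M\to 0$ and set $D:=\coker(d_3^{\ast})=\D(\Syz_2 M)$. The hypothesis $\Ext^2_R(M,R)=0$ says precisely that $F_1^{\ast}\xrightarrow{d_2^{\ast}}F_2^{\ast}\xrightarrow{d_3^{\ast}}F_3^{\ast}\to D\to 0$ is exact; since every differential has entries in $\fm$, this exact complex is the beginning of the minimal free resolution of $D$, so $\beta_0(D)=\beta_3(M)$, $\beta_1(D)=\beta_2(M)$, and $\beta_2(D)=\mu(\im d_2^{\ast})\leq\beta_1(M)$. (In your notation $\D(N)\cong\im(d_3^{\ast})=\Syz_1(D)$, so $\beta_j(\D(N))=\beta_{j+1}(D)$; the ``first-step drop'' you observed for $\D(N)$ is exactly the inequality $\beta_2(D)<\beta_1(D)$.) Now if $M$ is not free, then $\pd(M)=\infty$ and $\BNSI$ gives $\beta_2(M)>\beta_1(M)\geq 1$; in particular $\beta_1(D)=\beta_2(M)\neq 0$, so $D$ is not free either, and $\BNSI$ applied to $D$ in the one degree it does control yields $\beta_1(M)\geq\beta_2(D)>\beta_1(D)=\beta_2(M)>\beta_1(M)$, a contradiction. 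With that paragraph inserted your proof is complete; without it, the statement is not proved but only reduced to an inequality you acknowledge you cannot close.
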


The following  extends Lam's prediction:

\begin{corollary}\label{lam}
Let $(R,\fm)$ be  a  local ring such that $\mu(\fm)>1$ and suppose $R$ is one of the following types:
 \begin{enumerate}
\item[i)]  $R:=\frac{k[X_1,\ldots,X_m]}{(X_1,\ldots,X_m)^n}$, or
\item[ii)] $R$ is such that $\fm^2=0$.
\end{enumerate}
Then the only  examples of nonzero reflexive modules
are the free modules $R^\ell$ for some $\ell\in \mathbb{N}$. Also, over $\frac{k[X]}{(X)^n}$ there is no difference between finitely generated modules and reflexive modules.
\end{corollary}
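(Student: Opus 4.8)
The plan is to route both families of rings through Fact \ref{www}. Observe first that a reflexive module $\mathcal{M}$ is automatically weakly reflexive, since $\varphi_{\mathcal{M}}$ being bijective is in particular surjective. Because every ring appearing in (i) and (ii) is artinian, $\mathcal{M}$ is finitely generated by \cite[Corollary 2.4.(4)]{sch}. Consequently, once I know that these rings are $\BNSI$, the second assertion of Fact \ref{www} forces every finitely generated weakly reflexive, hence every reflexive, module to be free; so the only nonzero reflexive modules are the $R^\ell$. Thus the entire problem reduces to verifying the $\BNSI$ condition, namely that $\beta_i(M) > \beta_{i-1}(M)$ for $i>1$ whenever $M$ is non-free.

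For case (ii), where $\fm^2 = 0$, I would argue directly on a minimal free resolution. After splitting off the free part we may assume $M$ has no free summand, so that each syzygy $\Syz_i(M)$ with $i \geq 1$ lies in $\fm F_{i-1}$ and is therefore annihilated by $\fm$; hence every $\Syz_i(M)$ is a $k$-vector space. Writing $e := \mu(\fm) = \dim_k \fm$, the minimal cover $R^{\beta_i(M)} \twoheadrightarrow \Syz_i(M)$ has kernel $\fm^{\oplus \beta_i(M)}$, which gives the recursion $\beta_{i+1}(M) = e\,\beta_i(M)$ for $i \geq 1$. Since $R$ is artinian, finite projective dimension would force $M$ to be free, so a non-free $M$ has all Betti numbers positive; combined with $e>1$ this yields $\beta_{i+1}(M) > \beta_i(M)$ for every $i \geq 1$, which is exactly $\BNSI$.

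The main obstacle is case (i) with $n \geq 3$ (the case $n=2$ falls under (ii)), where the syzygies are no longer $k$-vector spaces and the tidy recursion collapses. My approach would be to exploit that $R = k[X_1,\ldots,X_m]/\fm^n$ is a Golod ring, a power of the maximal ideal of a polynomial ring defining a Golod quotient. Writing $S = k[X_1,\ldots,X_m]$, the asymptotic behaviour of Betti numbers over a Golod ring is governed by the rational denominator $1 - t\,(P^S_R(t)-1)$, whose relevant coefficient $\dim_k \Tor^S_1(R,k) = \mu_S(\fm^n) = \binom{m+n-1}{m-1}$ exceeds $1$ exactly when $m>1$; this is what drives the strict growth of the Betti sequence of any non-free module. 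Turning this into the sharp statement that $\beta_i(M) > \beta_{i-1}(M)$ already from $i=2$, uniformly over all non-free $M$ (and not merely asymptotically), is the delicate technical core of the argument. The cleanest route, which I would ultimately take, is to invoke Ramras's direct verification that precisely these rings are $\BNSI$ in \cite{ram}.

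Finally, the ring $k[X]/(X^n)$ has $\mu(\fm) = 1$ and is a zero-dimensional Gorenstein local ring, so it is excluded from (i)–(ii) and treated separately. For the last assertion I would simply appeal to Observation \ref{obz}: over a zero-dimensional Gorenstein local ring the finitely generated modules and the (weakly) reflexive modules coincide. Hence over $k[X]/(X^n)$ there is no difference between finitely generated modules and reflexive modules, which is the remaining claim.
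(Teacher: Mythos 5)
Your proposal is correct and follows essentially the same route as the paper: reduce to finitely generated modules via Schulz's theorem on reflexive modules over artinian rings, verify the $\BNSI$ property (citing Ramras \cite[3.3]{ram} for case (i)), conclude freeness from Fact \ref{www}, and dispose of $k[X]/(X^n)$ via Observation \ref{obz}. The only difference is that for case (ii) you supply a short self-contained computation (syzygies are $k$-vector spaces, so $\beta_{i+1}(M)=\mu(\fm)\beta_i(M)$ for $i\geq 1$) where the paper instead points to the proof of Observation \ref{m} and Menzin's work; your direct argument is valid and arguably cleaner.
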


  \begin{proof}   Let $\mathcal{M}$ be reflexive. Recall that over any commutative artinian ring, every reflexive module is finitely
generated. Hence, $\mathcal{M}$ is finitely generated. By  \cite[3.3]{ram}, $\frac{k[X_1,\ldots,X_m]}{(X_1,\ldots,X_m)^n}$ is $\BNSI$ provided that $m>1$. Also, in the second case,
 the ring $R$ is $\BNSI$, see the proof of Observation \ref{m}.  In the light of Fact \ref{www}, $\mathcal{M}$ is free and of finite rank.
For the last claim its enough to note that $\frac{k[X]}{(X)^n}$ is zero-dimensional and Gorenstein (see Observation \ref{obz}).
\end{proof}

The following extends \cite[Proposition 2 and 4]{men}  and \cite[Proposition 2.4]{y} by a new proof.

\begin{corollary}\label{m2COR}
Let $(R,\fm)$ be  a  local ring such that $\fm^2=0$. If $\mu(\fm)\neq1$, then  $R$ is $\BNSI$. In particular,
\begin{enumerate}
\item[i)]  if $\Ext^i_R(M,R)=0$ for some $i\geq 2$, then $M$ is free,
\item[ii)] any finitely generated  weakly reflexive module  is free,
\item[iii)] there is no difference between  reflexive modules  and free modules of finite rank,
\item[iv)] there is no difference between  skew Gorenstein modules  and free modules of finite rank,
\item[v)] there is no difference between  strongly reflexive modules  and free modules of finite rank.
\end{enumerate}\end{corollary}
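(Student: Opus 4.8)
The plan is to first show that $R$ is $\BNSI$ and then read off i)--v) from Fact \ref{www}, supplying one extra argument in the single place where Fact \ref{www} by itself is not enough. If $\mu(\fm)=0$ then $R$ is a field, every module is free, and the $\BNSI$ condition is vacuously true; so I may assume $e:=\mu(\fm)\geq 2$. The key structural input is that $\fm^2=0$ forces every higher syzygy to be a $k$-vector space. Indeed, in a minimal free resolution $\cdots\to F_1\to F_0\to M\to 0$ the image of each $F_i\to F_{i-1}$ lies in $\fm F_{i-1}$, so for $i\geq 1$ one has $\fm\cdot\Syz_i(M)\subseteq\fm^2F_{i-1}=0$; hence $\Syz_i(M)\cong k^{\beta_i(M)}$.

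Next I would extract the recurrence on Betti numbers. Resolving $k$ has first syzygy $\fm\cong k^{e}$, so $\mu(\Syz_1(k^{t}))=\mu(\fm^{t})=e\,t$; applying this to $\Syz_i(M)\cong k^{\beta_i(M)}$ gives $\beta_{i+1}(M)=e\,\beta_i(M)$ for every $i\geq 1$. I also note that a non-free module has infinite projective dimension here: a free higher syzygy would be annihilated by $\fm$ and hence zero, so the resolution could only stop at $M$ itself. Thus for non-free $M$ all $\beta_i(M)>0$, and $\beta_{i+1}(M)=e\,\beta_i(M)\geq 2\beta_i(M)>\beta_i(M)$ for $i\geq 1$, which is exactly the strict inequality $\beta_i(M)>\beta_{i-1}(M)$ for $i>1$. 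Therefore $R$ is $\BNSI$. (That $\mu(\fm)=1$ is excluded is visible here: then $e=1$ and the Betti numbers of a non-free module are merely constant.)

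With $\BNSI$ in hand, parts i) and ii) are immediate from Fact \ref{www}. For iii), a reflexive module over the artinian ring $R$ is finitely generated by \cite[Corollary 2.4.(4)]{sch} and, being reflexive, is in particular weakly reflexive, hence free by ii); conversely free modules of finite rank are reflexive. For v), a strongly reflexive $M$ has $\Ext^i_R(\D M,R)=0$ for all $i>0$, so $\D M$ is weakly Gorenstein and therefore free by Fact \ref{www}; since $\D\D M\cong M$ up to free summands and $\D$ annihilates free modules, $M$ is stably free, hence free over the local ring $R$.

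The one place where Fact \ref{www} is not by itself sufficient is iv), and this is the step I expect to be the main obstacle: a skew Gorenstein $M$ only tells us that $M^\ast$ is weakly Gorenstein, so Fact \ref{www} yields freeness of $M^\ast$, not of $M$. To bridge this gap I would decompose $M=R^{a}\oplus M'$ with $M'$ having no free direct summand, so that $(M')^\ast$ is free as well, and then observe that for such $M'$ every homomorphism $M'\to R$ has image inside $\fm$ (otherwise its image would be all of $R$ and the map a split surjection, producing a free summand). Consequently $(M')^\ast=\Hom_R(M',\fm)\cong\Hom_R(M',k)^{e}$ is a $k$-vector space, and a nonzero $k$-vector space is never free over $R$ since $\fm\neq 0$; hence $M'=0$ and $M=R^{a}$ is free. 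The converse inclusions in iii)--v) are clear, as free modules of finite rank satisfy every vanishing condition in sight.
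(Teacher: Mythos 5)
Your proposal is correct, and it follows the paper's skeleton exactly — first establish that $R$ is $\BNSI$, then harvest i)--v) from Fact \ref{www} — but it differs in three substantive places, each time trading a citation for a direct argument. For the $\BNSI$ property the paper quotes Fan's growth inequality $\beta_{i+1}(M)\geq\dim_k\left(\frac{(0:\fm)+\fm^2}{\fm^2}\right)\beta_i(M)$ from \cite[Proposition 2.4]{fun}, whereas you exploit the fact that $\fm^2=0$ forces every syzygy $\Syz_i(M)$ with $i\geq1$ to be a $k$-vector space and obtain the exact recursion $\beta_{i+1}(M)=\mu(\fm)\,\beta_i(M)$; this is sharper and self-contained, though, unlike Fan's inequality, it is special to the $\fm^2=0$ situation and would not carry over to the $\fm^3=0$ rings of Corollary \ref{les}. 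For iv) the paper invokes \cite[Lemma 2.6]{ram} (over a depth-zero ring, freeness of $M^\ast$ forces freeness of $M$); your replacement — split off the free part $M=R^a\oplus M'$, observe that every map $M'\to R$ lands in $\fm$ so $(M')^\ast\cong\Hom_R(M',k)^{\mu(\fm)}$ is a $k$-vector space, and a nonzero $k$-vector space is never free — is in effect a correct proof of that lemma in the present setting, modulo the one-line remark that $\Hom_R(M',k)=0$ forces $M'/\fm M'=0$ and hence $M'=0$ by Nakayama. For v) the paper simply notes that strongly reflexive modules are reflexive and appeals to iii), while your route (freeness of $\D(M)$, hence $M$ stably trivial, hence free) is also valid but slightly longer. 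No gaps; both arguments are sound.
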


\begin{proof}
 Since $\fm^2=0$ and
$\fm\neq 0$, we have $\fm \subseteq(0:\fm)\subseteq \fm$, i.e., $(0:\fm)=\fm$. We apply this to see  $$\dim_k\left(\frac{(0:\fm)+\fm^2}{\fm^2}\right)=\dim_k(\frac{\fm}{\fm^2})=\mu(\fm)>1.$$  Let $M$ be nonfree. By Auslander-Buchsbaum formula, $\pd(M)=\infty$ because $\depth(R)=0$. In particular, all betti numbers are nonzero. We recall from  \cite[Proposition 2.4]{fun} that
 $$\beta_{i+1}(M)\geq \left(\dim\frac{(0:\fm)+\fm^2}{\fm^2}\right)\beta_{i}(M)\quad \forall i\geq1.$$ Since $\beta_{i}(M)\neq 0$ and $\dim_k\frac{(0:\fm)+\fm^2}{\fm^2}>1$  we have  $$\left(\dim_k\frac{(0:\fm)+\fm^2}{\fm^2}\right)\beta_{i}(M)>\beta_{i}(M).$$
 We combine these to see $$\beta_{i+1}(M)\geq \left(\dim_k\frac{(0:\fm)+\fm^2}{\fm^2}\right)\beta_{i}(M)>\beta_{i}(M)$$ for all $i\geq1$.
By definition,
$R$ is $\BNSI$. Fact \ref{www} yields i) and ii). Let $\mathcal{M}$ be reflexive. Recall that over any commutative artinian ring, every reflexive module is finitely
generated. Hence, $\mathcal{M}$ is finitely generated. By  Fact \ref{www}, we get iii). Let $M$ be skew Gorenstein. By i), $M^\ast$ is  free.
In general freeness of $(-)$ can not follow from $(-)^\ast$. But, if depth of a ring is zero this   happens (see \cite[Lemma 2.6]{ram}).
Since $\depth(R)=0$, we conclude  that $M$ is free. It remains to prove v): This is trivial, because strongly reflexive
modules are reflexive and by iii) reflexive modules are free.
\end{proof}

Let $(R,\fm,k)$ be any Gorenstein ring with $\fm^3=0$ which is not field.  Then $k$ is totally reflexive but it is not free.
To find nonfree totally reflexive modules over non-Gorenstein ring  with $\fm^3=0$ it is enough to look at \cite{y}.
The following  extends and corrects  \cite[Examples (3)]{h} where it is shown that weakly Gorenstein modules are free without assuming $\fm^2\neq(0:\fm)$.

\begin{corollary}\label{les}
Let $(R,\fm)$ be a local ring such that $\fm^3=0$, $\mu(\fm)>1$  and that $\fm^2\neq(0:\fm)$. Then $R$ is $\BNSI$.
For finitely generated modules we have:\[\begin{array}{lllllll}
&&\textmd{weakly Gorenstein}&\Leftrightarrow\textmd{homologically reflexive}\Leftrightarrow\textmd{totally reflexive}\\
&&\quad\quad\quad\quad
\begin{turn}{90}
$\Leftrightarrow$
\end{turn}\\
&&\textmd{strongly reflexive} &\Leftrightarrow\textmd{reflexive}\Leftrightarrow
\textmd{weakly reflxive}\Leftrightarrow\textmd{skew Gorenstein} \Leftrightarrow\textmd{free}
\end{array}\]
\end{corollary}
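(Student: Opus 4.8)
The statement splits into two parts: (A) that $R$ is $\BNSI$, and (B) that over such a ring each of the eight notions in the diagram is equivalent to freeness. Part (B) is formal once (A) is available and follows the template of Corollary \ref{m2COR}, so the substance lies in (A).

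\emph{Part (A).} Let $M$ be a non-free finitely generated module. Since $\depth R=0$, the Auslander--Buchsbaum formula gives $\pd_R M=\infty$, so all Betti numbers are positive and it suffices to prove $\beta_{i+1}(M)>\beta_i(M)$ for every $i\ge 1$. The structural fact I would exploit is that higher syzygies are annihilated by $\fm^2$: in a minimal free resolution the $i$-th syzygy $N:=\Syz_i(M)$ (for $i\ge 1$) satisfies $N\subseteq\fm F$ for a free $F$, whence $\fm^2 N\subseteq\fm^3 F=0$, so $N$ is a module over $\bar R:=R/\fm^2$. Writing $b:=\mu(N)=\beta_i(M)$ and taking a minimal presentation $0\to W\to R^b\to N\to 0$, one checks $(\fm^2)^b\subseteq W\subseteq\fm R^b$ and $\fm W\subseteq(\fm^2)^b$. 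A direct length count then expresses $\beta_{i+1}(M)=\mu(W)$ in terms of $\mu(\fm)$, $\dim_k\fm^2$, $\dim_k\fm N$, and the image of the multiplication map $(\fm/\fm^2)\otimes_k\bigl(W/(\fm^2)^b\bigr)\to(\fm^2)^b$. This reduces $\BNSI$ to the single assertion $\mu(W)>\mu(N)$ for every higher syzygy $N$, viewed as an $\bar R$-module with no free summand.

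\emph{The strict inequality.} Converting this count into the strict bound $\mu(W)>b$ is where the hypothesis $\fm^2\ne(0:\fm)$ enters, and I expect this to be the main obstacle. The condition supplies a socle element of $R$ lying outside $\fm^2$, i.e. a minimal generator direction of $\fm$ that is killed by $\fm$; such a defect forces additional relations into $W$ and prevents the Betti sequence from stabilising. The role of the two hypotheses is transparent on examples: $\mu(\fm)>1$ excludes the hypersurface case, while $\fm^2\ne(0:\fm)$ excludes the Gorenstein-type situation. Indeed, over $k[x,y]/(x^2,y^2)$, where $\fm^2=(0:\fm)$, the module $R/(x)$ is periodic with $\beta_i\equiv 1$, so $\BNSI$ fails; this is precisely the gap in \cite[Examples (3)]{h} that the extra hypothesis repairs. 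The genuine difficulty is to run the socle argument \emph{uniformly} over all higher syzygies $N$, not merely for $N=k$, where the naive multiplicative estimate degenerates (its multiplier can equal $1$).

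\emph{Part (B).} Assume $R$ is $\BNSI$. Free modules satisfy all eight conditions, so only the reverse implications remain, and each collapses through Fact \ref{www}. A reflexive module is in particular weakly reflexive, hence free by Fact \ref{www}; the same applies to strongly reflexive and to totally reflexive modules, both of which are reflexive. If $M$ is weakly Gorenstein, homologically reflexive, or totally reflexive, then $\Ext^2_R(M,R)=0$, so $M$ is free by Fact \ref{www}. Finally, if $M$ is skew Gorenstein then $\Ext^i_R(M^\ast,R)=0$ for $i>0$ forces $M^\ast$ free; since $\depth R=0$, freeness descends from $M^\ast$ to $M$ by \cite[Lemma 2.6]{ram}. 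Thus every node of the diagram is equivalent to free, which is the claim.
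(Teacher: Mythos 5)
Your Part (B) is correct and coincides with the paper's own treatment: Fact \ref{www} disposes of every node of the diagram except skew Gorenstein, and for that one you correctly invoke the descent of freeness from $M^\ast$ to $M$ over a depth-zero ring via \cite[Lemma 2.6]{ram}. The problem is Part (A). You reduce the $\BNSI$ claim to the strict inequality $\mu(W)>\mu(N)$ for every higher syzygy $N$, and then write that converting the length count into this strict bound ``is where the hypothesis $\fm^2\neq(0:\fm)$ enters'' and that you ``expect this to be the main obstacle.'' That obstacle is the entire content of the statement: everything before it is soft bookkeeping (syzygies killed by $\fm^2$, positivity of all Betti numbers from $\depth R=0$ and Auslander--Buchsbaum), and you never actually produce the inequality. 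As it stands the proposal establishes a reduction, not a proof.

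The paper does not attempt a direct socle count at all: after splitting off the case $\fm^2=0$ (which is Corollary \ref{m2COR}, using $\mu(\fm)>1$), it quotes Lescot's theorem \cite[Lemma 3.9]{les}, which asserts precisely that over a local ring with $\fm^3=0$, $\fm^2\neq 0$ and $\fm^2\neq(0:\fm)$ the Betti sequence of any non-free module is strictly increasing from the first step on. That is a genuine theorem resting on Lescot's analysis of Poincar\'e series over such rings; it does not drop out of the multiplicative estimate $\beta_{i+1}\geq \bigl(\dim_k\tfrac{(0:\fm)+\fm^2}{\fm^2}\bigr)\beta_i$ that powers Corollary \ref{m2COR}, since --- as you yourself observe --- that multiplier can equal $1$ here. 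Your diagnosis of the role of the two hypotheses (the periodic module $R/(x)$ over $k[x,y]/(x^2,y^2)$, where $\fm^2=(0:\fm)$ and $\beta_i\equiv 1$, showing the hypothesis cannot be dropped) is correct and is useful sanity-checking, but it is not a substitute for the missing argument. To complete the proof you must either cite Lescot's result, as the paper does, or actually carry out the uniform estimate over all higher syzygies that you flag as the difficulty.
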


\begin{proof}
The claim in the case $\fm^2=0$ is in Corollary \ref{m2COR} (here, we used the assumption $\mu(\fm)>1$).
Without loss of the generality we may assume that $\fm^2\neq 0$.
Let $M$ be non-free. In view of  \cite[Lemma 3.9]{les}  $\beta_{i}(M)>{\beta_{i-1}}(M)$ for  $i> 0$. By definition,
$R$ is $\BNSI$. In view of   Fact  \ref{www}  we see all of the equivalences except the skew Gorenstein property.
Suppose $M$ is skew Gorenstein. In the same vein and similar to  Corollary \ref{m2COR}, we see that  $M$ is free.
\end{proof}

Freeness of weakly Gorenstein modules over $\frac{\mathbb{Q}[X,Y]}{(X^2,XY,Y^3)}$ follows from  \cite[Corollary 4.8]{saeed2}.
What can one say on the freeness of  reflexive modules? We have:

\begin{example}\label{2113}
Let $R:=\frac{\mathbb{Q}[X,Y]}{(X^2,XY,Y^3)}$. Then any reflexive module is free.
\end{example}

\begin{proof} Let $\mathcal{M}$ be any reflexive. Since $\dim R=0$, $\mathcal{M}$ is finitely generated.
We have $\fm=(x,y)$ and that  $\fm^3=(x^3,x^2y,xy^2,y^3)=0$. Let us compute
the socle. By definition, $$(0:\fm)=(0:x)\cap(0:y)=(x,y)\cap(x,y^2)=(x,y^2).$$
Clearly, $(0:\fm)\neq \fm^2$. In view of Corollary \ref{les}, $\mathcal{M}$ is free.
 \end{proof}

Freeness of totally reflexive is not enough strong to deduce freeness of reflexive:

\begin{example}\label{211}
Let $R:=\frac{\mathbb{Q}[[X,Y,Z]]}{(XY,XZ)}$. Then totally reflexive are free and there is a reflexive module which is not free.
\end{example}

\begin{proof}The ring  $R$ is  the fiber product $\mathbb{Q}[[X]]\times_{\mathbb{Q}}\mathbb{Q}[[Y,Z]]$ (see \cite[Ex. 3.9]{saeed1}).
Since the ring is not Gorenstein (e.g. it is not equi-dimensional) and is fiber product, any totally reflexive is free, see \cite[Corollary 4.8]{saeed2}. Now, we look at $M:=\frac{R}{(y,z)}$.
We have, $$\Hom(M,R)\simeq\{r\in R:r(y,z)=0\}\simeq xR\simeq R/(0:x)= R/(y,z)=M.$$From this, $M\simeq M^{\ast\ast}$. By \cite[1.1.9(b)]{kris} a finitely generated module is reflexive if it is isomorphic to its bidual. So, $M$ is reflexive. Since it annihilated by $(y,z)$ we see that it is not free.
 \end{proof}

\subsection{Freeness of totally reflexive modules}

\begin{definition}
We say a local ring is eventually $\BNSI$ if there is an $\ell\geq1$ such that for every non-free module $M$ we have $\beta_{i}(M)>{\beta_{i-1}}(M)$ for  $i> \ell$.
\end{definition}

\begin{example}\label{2.1}
Let $(A,\fn)$ be a regular local ring of dimension $n>1$ and $f\in\fn$ be nonzero.
Let $R:=\frac{A}{f\fn}$. The following assertions hold:
\begin{enumerate}
\item[i)] Any finitely generated  totally reflexive module is free.
\item[ii)] For every non-free module $M$ we have: $$\ldots>\beta_{n+2}(M)>{\beta_{n+1}}(M)>\beta_{n}(M)\geq\beta_{n-1}(M)\geq\ldots\geq\beta_{1}(M).$$
\item[iii)] Let $\D(-)$ be the Auslander transpose and let $M:=\D(fR)$. Then \[\begin{array}{ll}
\ldots>\beta_{n+2}(M)>{\beta_{n+1}}(M)&>\beta_{n}(M)\\
&\geq\beta_{n-1}(M)\geq\ldots\geq\beta_{3}(M)=n\\
&>\beta_{2}(M)=\beta_{1}(M)=1\\
&< n=\beta_{0}(M) .\end{array}\]\footnote{The paper \cite[3.5]{ab} claims that $\beta_{i+1}(-)>\beta_{i}(-)$ for all $i>\depth(R)$ over any  Golod ring $R$ which is not hypersurface and all modules of infinite projective dimension. This is in contradiction with iii).  I feel that
\cite{ab} has a misprint and  the mentioned result should be   stated that $\beta_{i+1}(-)>\beta_{i}(-)$ for all $i>\emd(R)$. }
In fact, $fR$ is non-free but reflexive.
\end{enumerate}In particular,  the ring $R$ is eventually $\BNSI$ but is not  $\BNSI$.
\end{example}

The fact that $fR$ is  reflexive is due to Ramras by a different argument. Also, part i) extends \cite[Example 5.1]{ta} in three directions via a new argument.

\begin{proof}  i)  The ring $R$ is Golod. Note that $\codepth(R):=\emd(R)-\depth R=n-0>1$.
Rings of codepth at most one are called hypersurface.
Avramov and Martsinkovsky proved,
over a Golod local
ring that is not hypersurface, that every  any module of finite $\Gdim$  is of finite $\pd$.
 Also, this is well-known that $\Gdim$  is the same as of  $\pd$ provided $\pd$ is finite.
 By this, any finitely generated  totally reflexive module is free.

ii)  Since $\depth(R)=0$ any non-free is of infinite projective dimension. Lescot proved over a Golod ring $R$ which is not hypersurface,  $\beta_{i}(M)>{\beta_{i-1}}(M)$ for  $i> \emd(R)$ and all  $M$ of infinite projective dimension (see \cite[6.5]{les2}). Also \cite[Proposition 3.4]{ram} says that the betti sequence is not decreasing. By these,
$$\beta_{n+i}(M)>\ldots>{\beta_{n+1}}(M)>\beta_{n}(M)\geq\beta_{n-1}(M)\geq\ldots\geq\beta_{1}(M).$$

iii)  By \cite[Proposition 3.5]{ram}, $R$ is not $\BNSI$.  It follows from ii) that there is  a non-free module $N$ and some $i<n$ such that $\beta_{i+1}(N)=\beta_{i}(N)$.
Let us find them. Let  $\fn:=(X_1,\ldots,X_n)$. Since $(0:_Rf)=\fm$ the minimal presentation of $fR$ is  $R^n\to R\to fR\to 0$. Apply $\Hom(-,R)$ we have
$$0\lo (fR)^{\ast}\lo R^{\ast}\simeq R\lo R^n\simeq (R^n)^{\ast}\lo \D(fR)\lo 0\quad(\star)$$
Then $R\lo R^n$ is the minimal presentation of  $\D(fR)$ if $(fR)^{\ast}$ has no free direct summand.
Since $\depth R=0$ and in the light of \cite[Lemma 2.6]{ram}, $(fR)^{\ast}$  has no  free direct summand.
This implies that $R\stackrel{\varphi}\lo  R^n \to \D(fR)\to 0 $ is the minimal presentation of $ \D(fR)$, where $\varphi:=(x_1,\ldots,x_n)$. Since $$\ker(\varphi)=\bigcap \Ann(x_i)=(0:\fm)=fR \quad(\star,\star)$$ we see that
 $R\stackrel{f}\lo R\stackrel{\varphi}\lo  R^n \to \D(fR)\to 0$  is part of the minimal free resolution  of $ \D(fR)$.
That is $$\beta_{2}(\D(fR))=\beta_{1}(\D(fR))=1< n=\beta_{0}(\D(fR)).$$ Similarly, $$\beta_{3}(\D(fR))=\mu(0:f)=\mu(\fm)=n.$$
It follows from $(\star)$ and $(\star,\star)$ that $(fR)^\ast= fR$. Consequently, $fR$ is  reflexive. Since $\fm$ annihilated $fR$ we deduce that $fR$ is not free.

 By ii) the ring is  eventually $\BNSI$. By iii) $R$ is   not  $\BNSI$.
\end{proof}

\begin{lemma}\label{ab1}
Let $(R,\fm)$ be a local   ring of depth zero. Suppose there is $\ell$ such that the betti sequence $\beta_{i}(M)>{\beta_{i-1}}(M)$ for  $i> \ell$ and all nonfree totally reflexive module $M$.
Then  $\beta_{i}(M)>{\beta_{i-1}}(M)$ for  $i> 0$.
\end{lemma}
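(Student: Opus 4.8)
The plan is to exploit the one structural feature that sets totally reflexive modules apart from arbitrary modules: each admits a \emph{minimal complete resolution}, so it can be realized as an arbitrarily high syzygy of another totally reflexive module. This lets me convert a statement about the \emph{low}-degree Betti numbers of $M$ (where the hypothesis says nothing) into a statement about the \emph{high}-degree Betti numbers of a suitable cosyzygy (where the hypothesis applies). The point is that negative syzygies, not positive ones, are what move a low-index comparison into the range $i>\ell$.

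First I would dispose of the trivial case $\ell=0$, where the hypothesis already is the conclusion, and so assume $\ell\geq 1$. Let $M$ be a nonfree totally reflexive module. Since $\Gdim_R(M)=0$, both $M$ and $M^\ast$ are totally reflexive, and dualizing a minimal free resolution $\cdots\to F_1\to F_0\to M^\ast\to 0$ yields, using $\Ext^{>0}_R(M^\ast,R)=0$ and $M^{\ast\ast}\cong M$, a minimal coresolution $0\to M\to F_0^\ast\to F_1^\ast\to\cdots$. Splicing this with the minimal free resolution of $M$ produces a \emph{minimal} totally acyclic complex. For a fixed $n\geq 1$ let $N:=\Omega^{-n}(M)$ be the $n$-th cosyzygy read off from this complex. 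Then $N$ is finitely generated and totally reflexive, $M\cong\Syz_n(N)$, and—because the splicing is minimal—the minimal free resolution of $N$ satisfies $\beta_{n+j}(N)=\beta_j(M)$ for all $j\geq 0$. Moreover $N$ is nonfree: were it free, then $M\cong\Syz_n(N)=0$ for $n\geq 1$, contradicting that $M$ is nonzero. (The depth-zero hypothesis guarantees that a nonfree totally reflexive module has infinite projective dimension, so every Betti number occurring here is positive.)

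Now I would simply apply the hypothesis to the nonfree totally reflexive module $N$, with the choice $n:=\ell$. This gives $\beta_i(N)>\beta_{i-1}(N)$ for every $i>\ell$. Taking $i=\ell+j$ with $j\geq 1$ and translating through $\beta_{\ell+j}(N)=\beta_j(M)$ and $\beta_{\ell+j-1}(N)=\beta_{j-1}(M)$ yields $\beta_j(M)>\beta_{j-1}(M)$ for all $j\geq 1$, which is exactly the assertion $\beta_i(M)>\beta_{i-1}(M)$ for $i>0$.

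The only nonformal step—and hence the main obstacle—is the construction of the cosyzygy $N$ together with the exact index shift $\beta_{n+j}(N)=\beta_j(M)$. Everything here rests on the Auslander--Bridger theory of Gorenstein dimension zero: total reflexivity of $M$ is precisely what makes the dual of a minimal resolution of $M^\ast$ both exact and minimal, so that the two halves splice into a minimal totally acyclic complex whose truncations are again totally reflexive and carry the expected minimal Betti numbers. Once this shift is established, the remainder is routine bookkeeping with indices.
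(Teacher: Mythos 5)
Your proof is correct and follows essentially the same route as the paper: dualize a minimal free resolution of $M^\ast$ to obtain a minimal coresolution $0\to M\to F_0^\ast\to F_1^\ast\to\cdots$, take the $\ell$-th cosyzygy $N$, observe that $N$ is totally reflexive with $\beta_{\ell+j}(N)=\beta_j(M)$, and apply the hypothesis to $N$ in degrees $i>\ell$. You are in fact somewhat more careful than the paper's own argument, which does not explicitly verify that $N$ is nonfree or that the spliced complete resolution is minimal at the joining point.
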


\begin{proof}The idea is taken from \cite{ram2}.
We look at the minimal free resolution $$\cdots  \lo  R^{\beta_{i}{({M^\ast})}}\lo\cdots\lo  R^{\beta_{0}{({M^\ast})}}\lo M ^{\ast}\lo 0 .$$ Since $M$ is totally reflexive
$\Ext^{+}(M^{\ast},R)=0$. By duality, there is an exact sequence  $$0\lo M^{\ast\ast}\simeq M\lo R^{\beta_{0}{({M^\ast})}}\stackrel{d_0}\lo R^{\beta_{1}{({M^\ast})}}\stackrel{d_1}\lo\ldots$$ We set $N:=\ker(d_{\ell+1})$. We note $\beta_{i}(M)=\beta_{i+\ell}(N)$. Since $M$ is totally reflexive and in view of $$0\lo  M\lo R^{\beta_{0}}\lo \ldots \lo R^{\beta_{\ell}}\lo N\to 0,$$ we deduce that  $N$ is of finite G-dimension. It follows from Auslander-Bridger-formula that  $N$ is totally  reflexive. By the assumption, $\beta_{i}(N)>{\beta_{i-1}}(N)$ for  $i> \ell$. From this we get $\beta_{i}(M)>{\beta_{i-1}}(M)$ for  $i> 0$.
\end{proof}

\begin{remark}\label{dd}
Depth of any eventually $\BNSI$ ring is zero. Indeed, suppose there is a regular element $x$. In view of the exact sequence $0\to R\to R\to R/xR\to 0$ we see that $\beta_{i}(M)={\beta_{i-1}}(M)=0$ for  $i> 1$, a contradiction.
\end{remark}

Problem 1.1 specializes (see \cite[Page  402]{ab}):
When totally reflexive modules are free?

\begin{corollary}\label{ab2}
Let $(R,\fm)$ be  eventually $\BNSI$.
Then any  finitely generated totally reflexive module is  free.
\end{corollary}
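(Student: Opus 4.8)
The plan is to argue by contradiction: I will assume there is a nonfree finitely generated totally reflexive module $M$ and extract an impossible infinite strictly descending chain of positive integers from its minimal complete resolution. First I would record two inputs. By Remark \ref{dd} the ring satisfies $\depth R = 0$. Next, since $R$ is eventually $\BNSI$ the defining inequality $\beta_i(N) > \beta_{i-1}(N)$ for $i > \ell$ holds for \emph{every} nonfree module $N$, in particular for every nonfree totally reflexive module; combined with $\depth R = 0$ this is exactly the hypothesis of Lemma \ref{ab1}. Hence Lemma \ref{ab1} upgrades it to $\beta_i(N) > \beta_{i-1}(N)$ for all $i > 0$, for every nonfree totally reflexive $N$. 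The only consequence I really need is the first instance: $\beta_1(N) > \beta_0(N)$ whenever $N$ is nonfree and totally reflexive. (One could in fact bypass Lemma \ref{ab1} and read this off directly, since applying the eventual inequality at a fixed level $i>\ell$ to all cosyzygies already propagates it everywhere; but routing through Lemma \ref{ab1} is cleaner.)

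Now I would fix a minimal complete resolution $C_\bullet : \cdots \to C_1 \to C_0 \to C_{-1} \to \cdots$ of $M$; this exists because $\Gdim M = 0$, and over a local ring it may be taken minimal, i.e.\ all differentials have entries in $\fm$. For each $j \in \mathbb{Z}$ set $M_{(j)} := \coker(C_{j+1} \to C_j)$, so that the left-hand truncation $\cdots \to C_{j+1} \to C_j \to M_{(j)} \to 0$ is the minimal free resolution of $M_{(j)}$ and therefore $\beta_i(M_{(j)}) = \rank C_{j+i}$ for all $i \geq 0$. Each $M_{(j)}$ is an image in a totally acyclic complex, hence totally reflexive. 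Applying $\beta_1 > \beta_0$ to $N = M_{(j)}$ gives $\rank C_{j+1} > \rank C_j$, and since $j$ ranges over all of $\mathbb{Z}$ this forces the entire rank sequence to be strictly increasing: $\cdots < \rank C_{-1} < \rank C_0 < \rank C_1 < \cdots$. Reading it to the left yields an infinite strictly descending chain $\rank C_0 > \rank C_{-1} > \rank C_{-2} > \cdots$ of positive integers, which is absurd. Thus no nonfree totally reflexive module can exist, and the claim follows.

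Two bookkeeping points must be secured before the inequality may legitimately be applied to every $M_{(j)}$: that all $C_j$ are nonzero (so that the complete resolution is genuinely two-sided and the cosyzygies $M_{(j)}$ with $j<0$ make sense) and that no $M_{(j)}$ is free. If either failed, a truncation of $C_\bullet$ would be a finite free resolution, giving $\pd M < \infty$; since $\Gdim M = 0$ and $\Gdim$ agrees with $\pd$ when the latter is finite, $M$ would be free, contrary to assumption. The cleanest justification is that $\Omega^{-1}$ is an autoequivalence of the stable category of totally reflexive modules, so $M$ nonfree forces every $M_{(j)}$ nonfree; alternatively, minimality gives $\im(C_j \to C_{j-1}) \subseteq \fm C_{j-1}$, and because $\depth R = 0$ a nonzero socle element annihilates every vector of $\fm C_{j-1}$, so no nonzero free module embeds there, again ruling out a free cosyzygy. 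I expect this verification — that the complete resolution is two-sided, minimal, and has all cosyzygies nonfree — to be the only real obstacle; the numerical heart of the argument, that $\beta_1 > \beta_0$ transported along every cosyzygy cannot coexist with a $\mathbb{Z}$-indexed sequence of positive ranks, is immediate once it is in place.
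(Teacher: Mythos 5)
Your argument is correct, but the endgame is genuinely different from the paper's. Both proofs begin the same way: Remark \ref{dd} forces $\depth R=0$, and Lemma \ref{ab1} upgrades the eventual inequality to $\beta_i(N)>\beta_{i-1}(N)$ for all $i>0$ and every nonfree totally reflexive $N$. From there the paper works with the Auslander transpose: it uses \cite[Lemma 2.6]{ram} to see that $M^{\ast}$ is nonfree, reads off $\beta_0(M^{\ast})=\beta_2(\D(M))>\beta_1(\D(M))=\beta_0(M)$ from the minimal presentation $0\to M^{\ast}\to F_0^{\ast}\to F_1^{\ast}\to \D(M)\to 0$, and then applies the same inequality to $M^{\ast}$ to obtain the cycle $\beta_0(M)=\beta_0(M^{\ast\ast})>\beta_0(M^{\ast})>\beta_0(M)$. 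You instead transport the single inequality $\beta_1>\beta_0$ along every cokernel of a minimal complete resolution and run an infinite descent $\rank C_0>\rank C_{-1}>\rank C_{-2}>\cdots\geq 1$. The two contradictions are two faces of the same phenomenon (your $M_{(-1)}$ and the paper's $\D(M)$ sit at adjacent spots of $C_\bullet$ and its dual, and indeed the two derivations produce opposite inequalities between $\beta_0(M)$ and $\beta_0(M^{\ast})$, which is itself an immediate contradiction), but your route avoids the transpose and Ramras's duality lemma entirely, at the price of certifying that $C_\bullet$ is two-sided, minimal, and has nonfree cokernels --- which your socle argument handles correctly since $\depth R=0$. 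Your parenthetical remark that Lemma \ref{ab1} can be bypassed by applying the eventual inequality to shifted cosyzygies is also right. One point to nail down: a minimal complete resolution with $\coker(C_1\to C_0)\cong M$ exists only when $M$ has no free direct summand (otherwise the splice map $F_0\to G_0^{\ast}$ fails to be minimal, and minimality at the splice is exactly what your computation of $\beta_i(M_{(j)})$ for $j<0$ uses); so first strip off the free part of $M$, exactly as the paper does explicitly in Remark \ref{ab3}. With that one-line reduction your proof is complete.
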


Example \ref{2.1} shows that one can not replace totally-reflexivity with the reflexivity.
This can follows from \cite{ab}. However, the following proof is so easy:

\begin{proof}There is $\ell$ such that  $\beta_{i}(M)>{\beta_{i-1}}(M)$ for  $i> \ell$ and all  non-free $M$. Due to the  above  remark, $\depth R=0$. In view of Lemma \ref{ab1} we can take $\ell=0$ provided $M$ is totally reflexive and non-free.
 Since $\depth R=0$ and in the light of \cite[Lemma 2.6]{ram}, $M^{\ast}$ is not free.  Since $M^{\ast}$ is not free, $\D(M)$ is not free. Let $F_1\to F_0\to M\to 0$ be a minimal presentation of $M$.
We observed that $$0\lo M^{\ast}\lo F_0^{\ast}\lo F_1^{\ast}\lo \D(M)\lo 0 $$  provides a minimal presentation of $\D(M)$. Recall that  $\Gdim(-) = 0 $ if and only if $\Gdim(\D(-)) = 0$.
By this
 $\D(M)$ is totally reflexive. Also, $$\beta_{0}(M^\ast)=\beta_{2}(\D(M))>\beta_{1}(\D(M))=\beta_{0}(M).$$
 Another  use of the former observation implies that $$\beta_{0}(M)=\beta_{0}(M^{\ast\ast})>\beta_{0}(M^\ast)>\beta_{0}(M).$$ This is a contradiction that we searched for it.
\end{proof}

The above argument shows a little more:
\begin{remark}\label{ab3}
Let $(R,\fm)$ be a local   ring. Suppose there is $\ell$ such that   $\beta_{i}(M)>{\beta_{i-1}}(M)$ for  $i> \ell$ and all nonfree totally reflexive module $M$.
 Then, any  finitely generated totally reflexive module is  free.
\end{remark}

\begin{proof}We assume $\depth(R)>0$. Let $M$ be totally reflexive. Suppose on the contradiction that
$M$ is not free. It turns out that $M$ is of infinite projective dimension.
Without loss of generality we may assume that $M$ has no free direct summand
(any  direct summand  of weakly reflexive module is weakly reflexive).
 If $M^\ast$ has a free direct summand, then its dual $M^{\ast\ast}$ has a free direct summand, and in view of  $M\simeq M^{\ast\ast}$ we get a contradiction. We proved that $M^\ast$ has no free direct summand. Without loss of generality we may assume that $\ell> \depth R$.
We look at the minimal free resolution $$\cdots  \lo  R^{\beta_{i}{({M^\ast})}}\lo\cdots\lo  R^{\beta_{0}{({M^\ast})}}\lo M ^{\ast}\lo 0 .$$ Since $M$ is totally reflexive
$\Ext^{+}(M^{\ast},R)=0$. By duality, there is an exact sequence  $$0\to M^{\ast\ast}\simeq M\lo R^{\beta_{0}{({M^\ast})}}\stackrel{d_0}\lo R^{\beta_{1}{({M^\ast})}}\stackrel{d_1}\lo\cdots.$$  We set $N:=\ker(d_{\ell+1})$. Since $M$ is of finite $\Gdim$ and in view of $$0\lo  M\lo R^{\beta_{0}}\lo \ldots \lo R^{\beta_{\ell}}\lo N\lo 0,$$ we deduce that  $N$ is of finite G-dimension.  Due to the exact sequence
we have $\Ext^i_R(N,R)=\Ext^{\ell-i}(M^\ast,R)$ for all $0<i\leq\depth R$, this is zero  because $M$ is  totally reflexive. Recall that $$\Gdim(N)=\sup\{i:\Ext^i_R(N,R)\neq 0\}$$ and  that $\Gdim$ is bounded by depth provided it is finite. We combine these to see that $\Gdim(N)=0$.
Also, $\pd(N)=\infty$, because
 $\pd(M)=\infty$.  By the assumption, $\beta_{i}(N)>{\beta_{i-1}}(N)$ for  $i> \ell$. From this we get $\beta_{i}(M)>{\beta_{i-1}}(M)$ for  $i> 0$. In particular, we can take $\ell=0$.
Suppose on the contradiction that  $\pd(M^\ast)<\infty$. It follows that  $$\pd(M^\ast)=\sup\{i:\Ext^i_R(M^\ast,R)\neq 0\}.$$ This is zero because $M$ is totally reflexive. Freeness
pass to dual. From this,
$M\simeq M^{\ast\ast}$ is free,  a contradiction.
Let $F_1\to F_0\to M\to 0$ be a minimal free resolution of $M$.
Then we have $0\to M^{\ast}\to F_0^{\ast}\to F_1^{\ast}\to \D(M)\to 0$. Apply this to see $\pd(\D(M))=\infty$. Also, $F_0^{\ast}\to F_1^{\ast}\to \D(M)\to 0 $ is the minimal presentation of $\D(M)$, because   $M^\ast$ has no free direct summand.    Recall that  $\Gdim(\D(M)) = 0 $.
We have  $$\beta_{0}(M^\ast)=\beta_{2}(\D(M))>\beta_{1}(\D(M))=\beta_{0}(M).$$
Another  use of the former observation implies that $\beta_{0}(M)=\beta_{0}(M^{\ast\ast})>\beta_{0}(M^\ast)>\beta_{0}(M)$, a contradiction.
\end{proof}

One may like a ring for which   every non-free module $M$ there is an $\ell(M)$ such that  $\beta_{i}(M)>{\beta_{i-1}}(M)$ for  $i> \ell(M)$.
We say a such ring is \textit{weakly $\BNSI$}.
This property is not enough strong to deduce freeness from the totally reflexiveness:

\begin{observation}\label{m3}
Let $(R,\fm)$ be  a  Gorenstein  local ring such that $\fm^3=0$ and  $\mu(\fm)>2$.
Then $(R,\fm)$ is weakly $\BNSI$. Also, there is a nonfree totally reflexive module.
\end{observation}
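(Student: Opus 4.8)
The plan is to first reduce the hypotheses to a rigid numerical normal form, then dispatch the (easy) existence of a nonfree totally reflexive module, and finally establish the weakly $\BNSI$ property by an asymptotic analysis of Betti numbers whose only serious ingredient is an external growth estimate.

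\textbf{Normal form.} First I would rule out $\fm^2=0$: if $\fm^2=0$ then $\fm\subseteq(0:\fm)=\Soc(R)$, and since $R$ is Gorenstein and artinian one has $\dim_k\Soc(R)=1$, forcing $\mu(\fm)=\dim_k\fm\le 1$, against $\mu(\fm)>2$. Hence $\fm^2\neq0$. As $\fm\cdot\fm^2=\fm^3=0$ we get $\fm^2\subseteq\Soc(R)$, and one‑dimensionality of the socle gives $\fm^2=\Soc(R)=(0:\fm)$ with $\dim_k\fm^2=1$. Writing $e:=\mu(\fm)\ge 3$, the ring is artinian Gorenstein with Hilbert function $(1,e,1)$. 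The point to flag is that here $\fm^2=(0:\fm)$, which is exactly the situation \emph{excluded} from Corollary \ref{les}; so neither Fact \ref{www} nor the estimate of \cite[Lemma 3.9]{les} applies, and for $e\ge 3$ the ring is neither a complete intersection nor Golod (a Golod Gorenstein ring is a hypersurface), so the Lescot‑type tool \cite[6.5]{les2} is also unavailable. A genuinely different input is needed.

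\textbf{A nonfree totally reflexive module.} Since $R$ is Gorenstein of dimension $0$, the Auslander--Bridger formula gives $\Gdim_R N=\depth R-\depth N=0$ for every finitely generated $N$; equivalently, every finitely generated module is totally reflexive. As $\mu(\fm)>2$ the ring is not a field, so $k=R/\fm$ is not free. Thus $k$ is a nonfree totally reflexive module, which is all the second assertion requires. (Incidentally this forces $R$ to fail the hypothesis of Corollary \ref{ab2}, so $R$ is \emph{not} eventually $\BNSI$, which is precisely the contrast the example is meant to exhibit.)

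\textbf{Weakly $\BNSI$.} Let $M$ be nonfree; since $\depth R=0$, Auslander--Buchsbaum forces $\pd_R M=\infty$, whence $\beta_i(M)>0$ for all $i$. The crux is the growth of $(\beta_i(M))_i$, and for this I would invoke the analysis of free resolutions over short Gorenstein rings (Avramov--Iyengar--\v{S}ega): for $R$ Gorenstein with $\fm^3=0$ and $e\ge 2$, every finitely generated module has rational Poincaré series with common denominator $1-et+t^2$, i.e. $P^R_M(t)=\dfrac{p_M(t)}{1-et+t^2}$ with $p_M\in\mathbb{Z}[t]$; equivalently the Betti sequence eventually satisfies $\beta_{i+1}(M)=e\,\beta_i(M)-\beta_{i-1}(M)$. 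Factoring $1-et+t^2=(1-\alpha t)(1-\alpha^{-1}t)$ with $\alpha=\tfrac{e+\sqrt{e^2-4}}{2}>1$ (using $e\ge 3$), one gets $\beta_i(M)=A\alpha^{i}+B\alpha^{-i}$ for $i\gg 0$. If $A\le 0$ the values would eventually be non‑positive or tend to $0$, impossible for positive integers; hence $A>0$, so $\beta_i(M)\to\infty$ and $\beta_i(M)/\beta_{i-1}(M)\to\alpha>1$. Therefore there is $\ell(M)$ with $\beta_i(M)>\beta_{i-1}(M)$ for all $i>\ell(M)$, i.e. $R$ is weakly $\BNSI$.

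The main obstacle is the growth input in the third paragraph: producing the common denominator $1-et+t^2$ for \emph{all} finitely generated modules. This is not formal, since the non‑graded local setting together with the failure of the complete‑intersection and Golod hypotheses rules out the elementary devices used elsewhere in \S4; it rests on the structural study of resolutions over short local rings. Granting that one estimate, the normal form, the existence of a nonfree totally reflexive module, and the asymptotic step are all routine.
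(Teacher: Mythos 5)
Your proof is correct, and both of your first two steps (the reduction to Hilbert function $(1,e,1)$ with $\fm^2=(0:\fm)=\Soc(R)$, and taking $k$ as the nonfree totally reflexive module over a zero-dimensional Gorenstein ring) coincide with what the paper does. The difference is in the key growth step. You are right that Corollary \ref{les} and the Lescot/Golod tools are unavailable here, but the paper does not need the rationality of Poincar\'e series: it invokes the elementary inequality of Gasharov--Peeva \cite[Proposition 2.2]{ga}, namely $\beta_{i+1}(M)\geq\bigl(2\mu(\fm)-\ell(R)+h-1\bigr)\beta_i(M)$ for $i\geq\mu(M)$, where $h$ is the least $i$ with $\fm^{i+1}=0$. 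With $\ell(R)=e+2$ and $h=2$ the coefficient is $e-1\geq 2$, so $\beta_{i+1}(M)\geq 2\beta_i(M)>\beta_i(M)$ for all $i\geq\mu(M)$, giving weakly $\BNSI$ with the explicit threshold $\ell(M)=\mu(M)$ and an explicit doubling rate. Your route instead rests on the Avramov--Iyengar--\v{S}ega theorem that every finitely generated module over a short Gorenstein ring with $e\geq 3$ has Poincar\'e series with denominator $1-et+t^2$, followed by the linear-recurrence asymptotics; this is a heavier external input, but it buys the sharper conclusion $\beta_{i+1}(M)/\beta_i(M)\to\frac{e+\sqrt{e^2-4}}{2}$, whereas the paper's argument only gives the (sufficient) lower bound $e-1$ on the ratio. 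Your parenthetical observation that the ring cannot be eventually $\BNSI$ (by Corollary \ref{ab2} and the existence of a nonfree totally reflexive module) is a correct and pertinent addition that the paper leaves implicit.
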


\begin{proof}
If $\fm^2$  were be zero then we should have $(0:\fm)=\fm$. Since the ring is Gorenstein,  it follows that $\mu(\fm)=1$. This excluded by the assumption. We   assume that $\fm^2\neq0$.
 We set $n:=\mu(\fm)$. Since $\fm^3=0$ we see that $0\neq\fm^2\subset(0:\fm)$.
 Since $\dim(\Soc(R))=1$, we have
 $\ell(\fm^2)=1$.   It follows that $$\ell(R)=1+\ell(\fm)=1+\ell(\fm/ \fm^2)+\ell(\fm^2)=n+2.$$
Recall from  \cite[Proposition 2.2]{ga} that
\begin{enumerate}
\item[Fact] A) Let $(A,\fn)$ be an artinian ring and  $N$ be   finitely generated.  Let $h$ be the smallest $i$ such that $\fn^{i+1}=0$. Then
$\beta_{i+1} (N)\geq(2 \mu(\fn)- \ell(A)+h-1)\beta_{i} (N)$  for all $n \geq\mu(N)$.
\end{enumerate} Since $\fm^3=0$ we  have $h=2$. Also, $$
2 \mu(\fm)- \ell(R)+h-1= 2n-(n+2)+2-1 =n-1 \geq2.$$ Let $M$ be nonfree. By Auslander-Buchsbaum formula,
$\beta_{i} (M)\neq 0$ for all $i$.
In view of Fact A) we see $$\beta_{i+1} (M)\geq2\beta_{i} (M)>\beta_{i}(M) \emph{ for all } i \geq\mu(M).$$   By definition,
$R$ is weakly $\BNSI$.
Every non-free module $M$ is totally reflexive (e.g. the residue field), because the ring is Gorenstein.
\end{proof}

\begin{example}
The ring $R:=\frac{\mathbb{Q}[[X,Y,Z]]}{(X^2-Y^2,Y^2-Z^2,XY,YZ,ZX)}$ is weakly $\BNSI$.
\end{example}

\begin{proof}
This is a folklore example of  a Gorenstein ring. Also, $\fm^3=0$ and  $\mu(\fm)=3$. By Observation \ref{m3} $R$ is weakly $\BNSI$.
\end{proof}

\subsection{Freeness of weakly Gorenstein modules}

 Here,   modules are finitely generated.

\begin{observation}\label{m3COR}(After Menzin-Yoshino)
Let $(R,\fm,k)$ be non-Gorenstein Cohen-Macaulay ring   of minimal multiplicity and $k$ be infinite. The following assertions hold:

i) If $\Ext^i_R(M,R)=0$ for all $1\leq i\leq 2\dim R+2$, then $M$ is free.
In particular,  \[\begin{array}{lllllll} \textmd{strongly
reflexive}
\Leftrightarrow\textmd{homologically reflexive}
\Leftrightarrow\textmd{weakly Gorenstein}
\Leftrightarrow\textmd{totally reflexive}
\Leftrightarrow\textmd{free}
\end{array}\]

ii) The modules in part i) are equivalent with skew Gorenstein if and only if $\dim R=0$.

iii) Suppose in addition that the ring is  complete and quasinormal. Then
there is a nonfree reflexive module.
\end{observation}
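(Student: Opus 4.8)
The plan is to prove i) by induction on $d:=\dim R$, the point being to descend to the artinian case $\fm^2=0$, where everything is already known. The base case $d=0$ is immediate: a Cohen--Macaulay ring of minimal multiplicity and dimension zero has $\fm^2=0$, and non-Gorensteinness forces $\mu(\fm)\ge 2$, so by Corollary \ref{m2COR} the ring is $\BNSI$; since the hypothesis in particular gives $\Ext^2_R(M,R)=0$, Fact \ref{www} shows $M$ is free. For the inductive step I would first note that the class of rings in question is stable under cutting by a general hyperplane: as $k$ is infinite I can choose $x\in\fm\setminus\fm^2$ that is simultaneously superficial, part of a minimal reduction, and a nonzerodivisor, so that $\bar R:=R/xR$ is again Cohen--Macaulay, non-Gorenstein, of minimal multiplicity, with the same infinite residue field and $\dim\bar R=d-1$. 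The only points to verify are $e(\bar R)=e(R)$ and $\mu(\bar\fm)=\mu(\fm)-1$, both standard for such an $x$, which together give that minimal multiplicity persists.

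Rather than reducing $M$ directly (note $x$ need not be $M$-regular), I would replace $M$ by its first syzygy $\Syz_1(M)\subseteq F_0$. Since $\depth R\ge 1$ we have $\Ass(\Syz_1(M))\subseteq\Ass R\not\ni\fm$, so $\depth\Syz_1(M)\ge 1$ and (choosing $x$ also outside the associated primes of $\Syz_1(M)$) $x$ is a nonzerodivisor on it. From $\Ext^i_R(M,R)=0$ for $1\le i\le 2d+2$ one gets $\Ext^i_R(\Syz_1(M),R)=0$ for $1\le i\le 2d+1$. Feeding $0\to R\xrightarrow{x}R\to\bar R\to 0$ into $\Ext_R(\Syz_1(M),-)$ and using that these groups are killed by $x$ in the relevant range yields $\Ext^i_R(\Syz_1(M),\bar R)=0$ for $1\le i\le 2d$; by the change-of-rings (Rees) isomorphism, valid because $x$ is regular on $R$ and on $\Syz_1(M)$, this reads $\Ext^i_{\bar R}(\Syz_1(M)/x\Syz_1(M),\bar R)=0$ for $1\le i\le 2d=2(d-1)+2$. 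The induction hypothesis applies to $\bar R$, so $\Syz_1(M)/x\Syz_1(M)$ is $\bar R$-free; lifting freeness through the nonzerodivisor $x$ shows $\Syz_1(M)$ is $R$-free, i.e. $\pd_R M\le 1$. Finally, in a minimal resolution $0\to F_1\xrightarrow{\phi}F_0\to M\to 0$, the vanishing $\Ext^1_R(M,R)=\coker(\phi^\ast)=0$ forces $\phi^\ast$ to be surjective; as the entries of $\phi$ lie in $\fm$, Nakayama gives $F_1=0$, so $M$ is free. I expect the main obstacle to be only bookkeeping: the numerology $2d+2\rightsquigarrow 2d+1\rightsquigarrow 2d$ is exactly tight, and one must arrange the single element $x$ to be simultaneously superficial, part of a minimal reduction, and regular on both $R$ and $\Syz_1(M)$; no deeper difficulty should arise.

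For the equivalences in i), freeness trivially implies each property, and weakly Gorenstein, homologically reflexive, and totally reflexive all yield $\Ext^{>0}_R(M,R)=0$, hence the hypothesis of i), so $M$ is free. For strongly reflexive I would instead apply i) to $\D(M)$: the condition $\Ext^{>0}_R(\D(M),R)=0$ makes $\D(M)$ free, whence $M\cong\D(\D(M))$ is free (stably). For ii), if $d=0$ then $\fm^2=0$ and Corollary \ref{m2COR}(iv) already identifies skew Gorenstein modules with free modules, so the two classes coincide. If $d\ge 1$, then $\depth R\ge 1$ gives $\Soc R=0$, hence $k^\ast=\Hom_R(k,R)=0$ and $(R\oplus k)^\ast\cong R$; thus $M:=R\oplus k$ is skew Gorenstein but not free, so the equivalence fails. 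This yields the stated ``if and only if''.

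For iii), completeness provides a canonical module $\omega_R$, and under quasinormality (Serre's $(\Se_2)$ together with Gorensteinness in codimension one) the module $\omega_R$ is reflexive; since $R$ is not Gorenstein, $\omega_R$ is not free, so $\omega_R$ is the desired nonfree reflexive module. Alternatively, $\Syz_2(k)$ satisfies $(\Se_2)$ as a module over the $(\Se_2)$-ring $R$, hence is reflexive under the same quasinormality hypothesis, and it is nonfree because $\pd_R k=\infty$. The only subtlety here is the precise reading of ``quasinormal'': it is exactly the hypothesis guaranteeing that $(\Se_2)$-modules (in particular $\omega_R$ and second syzygies) are reflexive.
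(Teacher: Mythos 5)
Your proposal is correct, and for parts (ii) and (iii) it coincides with the paper's argument almost verbatim: the paper also splits (ii) into the artinian case (where $\fm^2=0$ and Corollary \ref{m2COR} applies) and the positive-depth case with the counterexample $M=k\oplus R$, $M^\ast\cong R$; and for (iii) it also invokes Vasconcelos's criterion that over quasi-normal rings maximal Cohen--Macaulay modules (in particular $\omega_R$, which exists by completeness) are reflexive, while $\omega_R$ is non-free since $R$ is not Gorenstein. The real divergence is in part (i): the paper simply cites Menzin's Proposition 7 for the vanishing statement, whereas you reconstruct a full proof by induction on dimension --- cutting by a general superficial nonzerodivisor $x$ (available since $k$ is infinite) to preserve minimal multiplicity and non-Gorensteinness, passing to $\Syz_1(M)$ so that $x$ is regular on the module, and using the Rees change-of-rings isomorphism to land in the base case $\fm^2=0$, where the $\BNSI$ property (Corollary \ref{m2COR} and Fact \ref{www}) finishes. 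Your bookkeeping $2d+2\rightsquigarrow 2d+1\rightsquigarrow 2d=2(d-1)+2$ is exactly tight and checks out, as does the final step $\pd M\le 1$ plus $\Ext^1_R(M,R)=0$ forcing freeness; the payoff is a self-contained argument where the paper has a black box. One further small difference: for the ``strongly reflexive $\Rightarrow$ free'' implication the paper applies part (i) to $M^\ast$ (using $\Ext^{i}(M^\ast,R)=\Ext^{i+2}(\D M,R)=0$, then $M\cong M^{\ast\ast}$), while you apply it to $\D(M)$ and use that $\D$ is a duality on the stable category; both are valid, though the paper's route avoids any appeal to stable equivalence.
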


It may be nice to give an example of (iii): For example, any 2-dimensional non-Gorenstein normal local domain
with a rational singularity (see \cite[Example 4.8]{saeed1}).

\begin{proof} i) The first claim is in \cite[Proposition 7]{men}.
In particular, $$
\textmd{homologically reflexive}\Leftrightarrow\textmd{weakly Gorenstein}\Leftrightarrow\textmd{totally reflexive}\Leftrightarrow\textmd{free}.
$$
Suppose  $M$ is strongly
reflexive. Since $\Ext^{+}(M^\ast,R)=0$ and in view of the first part we see $M^\ast$ is free. This yields the freeness of  $M^{\ast\ast}$.
By the assumption, $M\simeq M^{\ast\ast}$. Consequently, $M$ is free.

ii) If $R$ is artinian, then $\fm^2=0$ and desired claim is in Corollary \ref{m2COR}.
Suppose $R$ is not artinian. Due to the Cohen-Macaulay assumption, $\depth (R)=\dim R>0$. We look at $M:=R/ \fm\oplus R$. It follows that $M^\ast\simeq R$. Hence, $\Ext^{+}_R(M^\ast,R)=0$. Thus,  $M$ is skew Gorenstein. Clearly, $M$ is not free.

iii) Since $R   $ is Cohen-Macaulay and homomorphic image of a Gorenstein ring
it posses a canonical module $\omega_R$.   In general, canonical module is not reflexive. However,  there
is a situation for which canonical module is reflexive. Indeed,  Vasconcelos proved that:
\begin{enumerate}
\item[Fact A)]  Over quasi-normal rings, a necessary and sufficient condition for $M$ to be reflexive
is that every $R$-sequence of two or less elements be also an $M$-sequence.
\end{enumerate}
The canonical module is maximal Cohen-Macaulay. Due to Fact A) we see $\omega_R$ is reflexive. However, $\omega_R$ is not free, because the ring is not    Gorenstein.
\end{proof}

Following \cite{saeed1}, we say $\fm$  is \textit{quasi-decomposable} if $\fm$ contains an $R$-sequence $\underline{x}$
(the empty set allowed) such that the module $\fm /\underline{x}R$  decomposes into nonzero submodules.
The following may extend \cite[Corollary 6.8]{saeed1} (and \cite[4.7]{saeed2}) by presenting a bound:

\begin{proposition}\label{nt}
Let $(R,\fm,k)$ be a non-Gorenstein local ring such that $\fm$ is quasi-decomposable. The following  holds

\begin{enumerate}
\item[i)] If $\Ext^i_R(M,R)=0$ for all $1\leq i\leq 6+3\depth R$, then $M$ is free.
In particular,   \[\begin{array}{lllllll} \textmd{strongly
reflexive}
\Leftrightarrow\textmd{homologically reflexive}
\Leftrightarrow\textmd{weakly Gorenstein}
\Leftrightarrow\textmd{totally reflexive}
\Leftrightarrow\textmd{free}
\end{array}\]
\item[ii)] There are situations for which reflexive are free.
\item[iii)] Suppose in addition
that $R$ is complete and quasi-normal, there are  reflexive modules
that are not  free.
\end{enumerate}
\end{proposition}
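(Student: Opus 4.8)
The plan is to kill the regular sequence $\underline{x}$ that witnesses quasi-decomposability and thereby reduce to a fiber product, tracking the $\Ext$-vanishing range throughout. Write $c$ for the length of $\underline{x}$ and set $\bar{R}:=R/\underline{x}R$. Since $\fm/\underline{x}R$ is a nontrivial direct sum of two ideals $\fa\oplus\fb$ with $\fa\fb\subseteq\fa\cap\fb=0$, the ring $\bar{R}$ is a nontrivial fiber product $S\times_k T$ with $S=\bar R/\fb$, $T=\bar R/\fa$ both different from $k$. Quotienting by a regular sequence preserves and reflects the Gorenstein property, so $\bar{R}$ is again non-Gorenstein; also $\depth\bar{R}=\depth R-c\leq 1$, because a nontrivial fiber product has depth at most one (forcing $c\in\{\depth R-1,\depth R\}$). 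To carry $M$ along I would first replace it by a first syzygy $M':=\Syz_1(M)$: since $M'$ embeds in a free module its associated primes lie among those of $R$, so the $R$-regular sequence $\underline{x}$ is automatically $M'$-regular, and $\Ext^i_R(M',R)\cong\Ext^{i+1}_R(M,R)=0$ for $1\leq i\leq 5+3\depth R$. The change-of-rings long exact sequence attached to $0\to R\xrightarrow{x}R\to R/xR\to 0$ loses one degree of vanishing per element killed and satisfies $\Ext^i_{\bar R}(\bar{M'},\bar R)\cong\Ext^i_R(M',R/xR)$ at each stage, so after descending along all of $\underline{x}$ one gets $\Ext^i_{\bar R}(\bar{M'},\bar R)=0$ for $1\leq i\leq 5+3\depth R-c$, which is at least $5+2\depth R$.

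Over the fiber product $\bar R$ I would then argue that this finite amount of vanishing already forces $\bar{M'}$ to be free. The structural inputs are that a nontrivial fiber product is a Golod ring, and that $\bar R$ is not a hypersurface (a hypersurface is Gorenstein, while $\bar R$ is not). Hence, exactly as in the proof of Example \ref{2.1}, the Avramov--Martsinkovsky theorem makes every $\bar R$-module of finite G-dimension free, while Lescot's estimate \cite[6.5]{les2} forces the Betti numbers of every infinite-projective-dimension module to increase strictly beyond $\emd(\bar R)$. Using the syzygy-and-transpose bookkeeping of Lemma \ref{ab1} and Remark \ref{ab3} (and, if $\depth\bar R=1$, one further $\bar{M'}$-regular reduction to depth zero, absorbed into the constants), the vanishing of $\Ext^i_{\bar R}(\bar{M'},\bar R)$ in a sufficiently long initial range forces $\bar{M'}$ to have finite G-dimension, whence it is free. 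Freeness lifts back up $\underline{x}$ because $\beta^R_i(M')=\beta^{\bar R}_i(\bar{M'})$ for an $M'$-regular sequence, so $M'$ is free and $\pd_R M\leq 1$; then finiteness of $\pd_R M$ together with $\Ext^1_R(M,R)=0$ (and $\Ext^{\pd_R M}_R(M,R)\neq 0$ whenever $M$ is nonfree of finite projective dimension) gives $\pd_R M=0$, i.e. $M$ is free. The explicit constant $6+3\depth R$ is calibrated to keep the range over $\bar R$ above the fiber-product threshold after paying for the syzygy shift and the descent; \textbf{the main obstacle is precisely pinning down that threshold}, that is, upgrading the asymptotic statement ``totally reflexive modules over $\bar R$ are free'' of \cite[Corollary 6.8]{saeed1} and \cite[4.7]{saeed2} to an effective finite-range freeness criterion with an explicit bound.

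The displayed chain of equivalences is then formal and parallels Observation \ref{m3COR}(i). Freeness trivially implies every listed property. If $M$ is weakly Gorenstein, homologically reflexive, or totally reflexive, then $\Ext^i_R(M,R)=0$ for all $i>0$, in particular for $1\leq i\leq 6+3\depth R$, so part (i) gives freeness. For the strongly reflexive case I would use that $M^\ast$ is, up to free summands, the second syzygy of the Auslander transpose, so that $\Ext^i_R(M^\ast,R)\cong\Ext^{i+2}_R(\D(M),R)=0$ for all $i>0$; thus $M^\ast$ is weakly Gorenstein, hence free by part (i), so $M^{\ast\ast}$ is free, and since strongly reflexive modules are reflexive we get $M\cong M^{\ast\ast}$ free.

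For part (ii) it suffices to exhibit one class of such rings in which reflexivity already forces freeness: if $\fm^2=0$ and $\mu(\fm)>1$, then $\fm$ is a $k$-vector space of dimension $>1$ on which $R$ acts through $k$, hence splits as a direct sum of copies of $k$, so $\fm$ is quasi-decomposable with empty $\underline{x}$, and $R$ is non-Gorenstein because its socle $(0:\fm)=\fm$ has dimension $\mu(\fm)>1$; over such $R$ every reflexive module is free by Corollary \ref{m2COR}(iii). For part (iii) I would follow Observation \ref{m3COR}(iii): completeness makes $R$ a homomorphic image of a regular ring, so the canonical module $\omega_R$ exists and satisfies $(\Se_2)$, whence over a quasi-normal ring Vasconcelos' criterion (Fact A in the proof of Observation \ref{m3COR}) shows $\omega_R$ is reflexive, while $\omega_R$ is not free since $R$ is non-Gorenstein. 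Thus $\omega_R$ is a nonfree reflexive module.
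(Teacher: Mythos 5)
Your reduction to the fiber product and your treatment of parts (ii) and (iii) are broadly in line with the paper, but part (i) has a genuine gap --- one you yourself flag as ``the main obstacle.'' Everything hinges on an \emph{effective} statement over the ring $\bar R$ with decomposable maximal ideal: that vanishing of $\Ext^i_{\bar R}(-,\bar R)$ in an explicit finite range forces $\pd\leq 1$. The Golod/Lescot route you propose cannot supply this. Lescot's inequality only gives strict growth of Betti numbers for $i>\emd(\bar R)$, a threshold depending on the embedding dimension rather than on $\depth R$; and the machinery of Lemma \ref{ab1} and Remark \ref{ab3}, which converts such an ``eventually $\BNSI$'' statement into a freeness criterion, works only for \emph{totally reflexive} modules, where the dual resolution and the transpose are available --- it does not apply to a module known only to satisfy finitely many one-sided $\Ext$-vanishings. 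So your argument, as written, proves at best that weakly Gorenstein modules over $R$ are free, not the finite-range statement with the bound $6+3\depth R$. The paper closes exactly this gap by quoting \cite[Corollary 6.3]{saeed1} (Nasseh--Takahashi), which is precisely the effective criterion over rings with decomposable maximal ideal; it then runs an induction that kills one element of $\underline{x}$ at a time, taking a fresh first syzygy at each step and paying two degrees of $\Ext$-vanishing per step (whence the constant $6+3\depth R$), with the standard reduction as in \cite[Proposition 7]{men}.

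A secondary problem: a single syzygy $M':=\Syz_1(M)$ does not make the whole sequence $\underline{x}$ regular on $M'$. Embedding in a free module guarantees only that $x_1$ is $M'$-regular; for $x_2$ to be regular on $M'/x_1M'$ you would need $M'/x_1M'$ to embed in a free $R/x_1R$-module, which requires $\Tor_1^R(M,R/x_1R)=0$ and is not automatic. To descend along a sequence of length $c$ you need a $c$-th syzygy, or a fresh syzygy after each reduction as in the paper; so the step ``the $R$-regular sequence $\underline{x}$ is automatically $M'$-regular'' is false as stated, though fixable by adjusting constants. Your examples for (ii) and (iii) are fine: (ii) via $\fm^2=0$ is a legitimate alternative to the paper's $k[x,y]/(x^2,xy,y^3)$, and (iii) is essentially the canonical-module argument of Observation \ref{m3COR}(iii).
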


\begin{proof} i)
Suppose first that $\fm=I\oplus J$. This translates to   $R\simeq R/ I\times_k R/J$. In view of \cite[Corollary 6.3]{saeed1} we see
$\pd(M)\leq 1$. If  projective dimension a module $(-)$ were be finite, then it should be  equal to $\sup\{i:\Ext^i_R(-,R)\neq 0\}$. We call this property by $(\ast)$. From this,
$M$ is free. Now, suppose that there is a nonempty set  $\underline{x}=x_1,\cdots, x_n$ of
$R$-sequence such that the module $\fm /\underline{x}R$ is decomposable. Let $n:=\mu(M)$ and look at $0\to\Syz(M)\to R^n\to M\to 0$.
Note that $x_1$ is regular over $\Syz(M)$. Let $\overline{(-)}:=-\otimes R/x_1R$. By the standard reduction,  $$\Ext^i_{\overline{R}}(\overline{\Syz}(M),\overline{R})=0  \quad \forall  1\leq i\leq 6+3\depth R-2,$$ see \cite[Proposition 7]{men} for more details. Following the inductive hypothesis, $\overline{\Syz}(M)$ is projective over $\overline{R}$.
Let $m:=\mu(\Syz(M))$ and apply $\overline{(-)}$ to $0\to\Syz_2(M)\to R^m\to \Syz(M)\to 0$. This is in turn imply that $\overline{\Syz}_2(M)=0$. We apply  Nakayama's lemma to conclude that
$\Syz_2(M)=0$. By definition,
 $\Syz(M)$ is projective, and consequently $\pd_{R}(M)\leq1$. Again, we use $(\ast)$ to deduce that $\pd_{R}(M)=0$.

ii) Remark that $R_1:=k[x,y]/(x^2,xy,y^3)$ is the fiber product $k[x]/(x^2)$ and $k[y]/(y^3)$ over $k$. In view of Example \ref{2113}
reflexive modules over $R_1$ are free.

iii) It is shown in \cite{saeed1} that
the ring  presented in Observation \ref{m3COR} is quasi-decomposable.
In view of Observation \ref{m3COR}(iii) we can find a reflexive module which is not free.
\end{proof}

\subsection{Freeness  of $M^\ast$ versus freeness of $M$}
Recall from Ramras' work that over any local ring of depth zero, freeness  of $M$ follows from  freeness of $M^\ast$.

\begin{observation}
	Let $(R,\fm)$ be a local ring such that freeness of each  module $M$ follows from freeness  of its dual. Then 	$\depth(R)=0$. 
\end{observation}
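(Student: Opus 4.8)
The statement is exactly the converse of the cited result of Ramras (freeness of $M$ follows from freeness of $M^\ast$ over depth-zero rings), so the plan is to prove the contrapositive: assuming $\depth(R)>0$, I would exhibit a single module $M$ whose dual $M^\ast$ is free but which is itself not free, thereby contradicting the hypothesis. The construction is completely explicit, so the work is really just a short verification.

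Since $\depth(R)>0$, the maximal ideal $\fm$ contains an $R$-regular element $x$. The first step is the computation of $(R/xR)^\ast$. A homomorphism $R/xR\to R$ is determined by the image $r$ of the class of $1$, and this image must satisfy $xr=0$; regularity of $x$ forces $r=0$, so $\Hom_R(R/xR,R)=0$. At the same time $R/xR$ is nonzero (because $x\in\fm$, so $xR\neq R$) and is annihilated by $x\neq0$, whence $\Ann_R(R/xR)\neq0$; since a nonzero free module has zero annihilator, $R/xR$ cannot be free.

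To avoid any discussion of whether the zero module counts as free, I would package this into $M:=R\oplus R/xR$ rather than using $R/xR$ directly. Then $M^\ast\cong R^\ast\oplus(R/xR)^\ast\cong R$ is free and nonzero. On the other hand, if $M$ were free it would be projective, and then its direct summand $R/xR$ would be projective as well; over a local ring projective modules are free by Kaplansky's theorem (already invoked in the excerpt), contradicting the previous paragraph. Hence $M$ is not free.

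Consequently $M^\ast$ is free while $M$ is not, contradicting the assumed implication that freeness of each module follows from freeness of its dual. Therefore $\depth(R)>0$ is impossible, and $\depth(R)=0$. There is no serious obstacle in this argument; the only point needing care is the harmless degenerate case of the zero module, which the direct-sum packaging sidesteps, keeping $M^\ast$ visibly a nonzero free module.
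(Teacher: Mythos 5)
Your proof is correct and follows essentially the same route as the paper, which takes $M:=k\oplus R$ and notes that $M^\ast\cong R$ is free (since $\depth(R)\geq 1$ forces $\Hom_R(k,R)=0$) while $M$ is not. Your variant with $R/xR$ in place of $k$ is an equally valid instance of the same idea, with the dual vanishing for the same reason (a regular element annihilates the cyclic summand).
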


\begin{proof}	Suppose on the contradiction that $\depth(R)\geq 1$.
	We look at $M:=k\oplus R$. Then $M^\ast$ is free and $M$ is not free.
	We conclude from this contradiction that
	$\depth(R)=0$. 
\end{proof}

The following is converse to \cite{dao}:
\begin{proposition}
	Let $(R,\fm,k)$ be a local ring. Suppose freeness of each torsion-less module $M$ follows from freeness  of its dual. 
Then $\depth(R)<2$.
\end{proposition}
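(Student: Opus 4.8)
The plan is to argue by contradiction. Assuming $\depth(R)\ges 2$, I will exhibit a \emph{single} torsion-less, non-free module whose dual is free; this directly contradicts the standing hypothesis and forces $\depth(R)<2$. The module I would use is the maximal ideal $\fm$ itself. Being a submodule of the free (hence torsion-less) module $R$, the ideal $\fm$ is torsion-less by the remark recalled at the start of \S2 that submodules of a torsion-less module are torsion-less, so it is a legitimate test object for the hypothesis. Note also that this is exactly the point where one must depart from the preceding Observation: the module $k\oplus R$ used there is \emph{not} torsion-less once $\depth(R)\ges 1$ (its direct summand $k$ has $\Hom(k,R)=\Soc(R)=0$), which is the structural reason the conclusion here is only $\depth(R)<2$ rather than $\depth(R)=0$.

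The heart of the argument is the computation of $\fm^\ast$. I would dualize the defining short exact sequence $0\to\fm\to R\to k\to 0$ into $R$, obtaining the exact sequence
$$0\to\Hom(k,R)\to R\to\fm^\ast\to\Ext^1_R(k,R)\to 0.$$
Here I invoke the standard depth-sensitivity of these Ext modules, namely $\depth(R)=\inf\{i:\Ext^i_R(k,R)\neq 0\}$. Since $\depth(R)\ges 2$, both flanking terms vanish: $\Hom(k,R)=\Soc(R)=0$ because $\depth(R)\ges 1$, and $\Ext^1_R(k,R)=0$ because $\depth(R)\ges 2$. Consequently the middle map $R\to\fm^\ast$ is an isomorphism, so $\fm^\ast\cong R$ is free.

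It then remains to check that $\fm$ is not free. This I would deduce again from $0\to\fm\to R\to k\to 0$: if $\fm$ were free, then $\fm$ is the first syzygy of $k$, so $\pd_R(k)\les 1$; by Auslander--Buchsbaum--Serre the ring $R$ would be regular with $\dim R=\pd_R(k)\les 1$, forcing $\depth(R)\les 1$ and contradicting $\depth(R)\ges 2$. Hence $\fm$ is non-free. We have thus produced a torsion-less, non-free module $\fm$ whose dual $\fm^\ast\cong R$ is free, contradicting the hypothesis; therefore $\depth(R)<2$.

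I expect the only genuinely delicate step to be the vanishing of the two flanking $\Ext$ terms in the dualized sequence, as this is precisely where the quantitative input $\depth(R)\ges 2$ (rather than merely $\ges 1$) enters and is consumed; everything else is formal bookkeeping with short exact sequences and standard freeness criteria. In particular, no hypothesis on $R$ beyond noetherian local is needed, since the torsion-lessness of an ideal and the identity $\depth(R)=\inf\{i:\Ext^i_R(k,R)\neq 0\}$ both hold in complete generality.
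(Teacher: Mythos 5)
Your proof is correct and follows essentially the same route as the paper: dualize $0\to\fm\to R\to k\to 0$, use $\depth(R)\ges 2$ to kill $\Hom(k,R)$ and $\Ext^1_R(k,R)$, conclude $\fm^\ast\cong R$ is free, apply the hypothesis to get $\fm$ free, and derive the contradiction $\pd_R(k)\les 1$. Your added remarks — that $\fm$ is torsion-less as a submodule of $R$ (so the hypothesis genuinely applies) and that $k\oplus R$ fails to be torsion-less once $\depth(R)\ges 1$ — are correct and usefully make explicit what the paper leaves implicit.
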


\begin{proof} 
Suppose on the contradiction that $\depth(R)\geq 2$.
	We look at $0\to\fm\to R\to k\to 0$. This induces the following long exact sequence
	$$0\lo\Hom(k,R)\lo \fm^\ast\lo R^\ast\lo\Ext^1_R(k,R)\lo 0.$$
	Since $\depth(R)>1$, we have $\Ext^0_R(k,R)=\Ext^1_R(k,R)=0$.
	From this,
	$\fm^\ast\cong R$ which is free. By the assumption, $\fm$ is free.
	Thus
	$$2\leq\depth(R)\leq\dim(R)\leq\gd(R)=\pd(k)\leq\pd(\fm)+1=1.$$ We conclude from this contradiction that
	$\depth(R)<2$.
\end{proof}

\subsection{Freeness  of certain  reflexive  modules}
Here, modules are finitely generated. When we proved the following result, we were not aware of \cite[Proposition 5.2]{bass2}. Therefore, we present a modern proof of it:
\begin{proposition}\label{msta}(After Bass)
	Let $(R,\fm,k)$ be a local ring. Suppose any module of the from $M^\ast$ is free. Then  $(R,\fm)$ is a regular ring of dimension at most two. 
\end{proposition}

\begin{proof}Firstly, we assume that $\depth(R)=0$. Let $M$ be finitely generated. By our assumption, $M^\ast$ is free. Recall that Ramras proved that freeness  of $M^\ast$ implies freeness of $M$.
	We deduce from this that any finitely generated module is free. In particular, $\frac{R}{\fm}$
	is free which implies that $\fm=0$ and so $R$ is a field.
Now, we assume that $\depth(R)=1$.
	 We look at $0\to\fm\to R\to k\to 0$. This induces the following  exact sequence
	$$0=\Hom(k,R)\lo \fm^\ast\lo R^\ast\lo\Ext^1_R(k,R)\lo 0.$$
		Since $\depth(R)=1$ we know that $\Ext^1_R(k,R)\cong\oplus k $ is  nonzero.
	In particular, $\pd(k)\leq 1$. This implies that  $1=\depth(R)\leq\gd(R)=1.$
	Thus, $R$ is a principal ideal domain.
	Finally, we assume that  $d:=\depth(R)\geq 2$.
	We look at the exact sequence$$0\lo\Syz_{d}(k)\lo R^n\lo \Syz_{d-1}(k)\lo 0.$$ This induces the following long exact sequence
	$$0\lo\Hom(\Syz_{d-1}(k),R)\lo R^n\lo \Hom(\Syz_{d}(k),R)\lo\Ext^1_R(\Syz_{d-1}(k),R)\lo 0,$$
	and that $\Ext^1_R(\Syz_{d-1}(k),R)\cong \Ext^d_R(k,R)$. 
	Since $\depth(R)=d$ we know that $\Ext^d_R(k,R)\cong\bigoplus_{\neq\emptyset} k $ is  nonzero.
By our assumption, $\Syz_{d-1}(k)^\ast$ and $\Syz_{d}(k)^\ast$ are free.	We deduce
	that $\pd(k)\leq 2$. This implies  $R$ is a regular ring of dimension $2$. 
\end{proof}

The following  item suggested by Souvik Dey:

\textbf{Second proof of Proposition \ref{msta}.}
Replace totally reflexive with free in the proof of Proposition \ref{mstac}. We leave the routine modification to the reader.  $\Box$\\

Over normal domains, freeness of rank-one reflexive modules implies $\UFD$. Over complete normal rings, freeness of reflexive modules of rank at most two  implies regularity, see \cite[2.14]{t}.

\begin{proposition}\label{4.1}Let $(R,\fm)$ be a local quasi-reduced ring. Then   reflexive modules are free if and only if $R$ is a regular ring of dimension at most two.
\end{proposition}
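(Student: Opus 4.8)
The plan is to prove the two implications separately, noting at the outset that a regular local ring is automatically quasi-reduced (it is a domain, hence reduced and generically Gorenstein, and in particular satisfies $(\Se_1)$), so the stated equivalence is well-posed under the standing hypothesis.

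For the direction $(\Leftarrow)$, suppose $R$ is regular of dimension $d\leq 2$. If $d=0$ then $R$ is a field and every module is free. If $d=1$ then $R$ is a discrete valuation ring; a reflexive module is in particular torsion-free, and finitely generated torsion-free modules over a principal ideal domain are free, so we are done. If $d=2$ this is precisely Serre's theorem, recalled in the introduction, that finitely generated reflexive modules over a two-dimensional regular local ring are free.

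For the direction $(\Rightarrow)$, assume every finitely generated reflexive $R$-module is free. The key input is that over a quasi-reduced ring second syzygy modules are reflexive; this was the fact used in the proof of the Remark following the definition of quasi-reduced. Applying it to the residue field, $\Syz_2(k)$ is reflexive, hence free by hypothesis. From the exact sequence $0\to\Syz_2(k)\to F_1\to F_0\to k\to 0$ with $F_0,F_1$ free and $\Syz_2(k)$ free, we read off $\pd_R(k)\leq 2$. By the Auslander--Buchsbaum--Serre characterization of regularity, finiteness of $\pd_R(k)$ forces $R$ to be regular, and then $\dim R=\gd(R)=\pd_R(k)\leq 2$. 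This is exactly the low-dimensional regularity required, and it mirrors the argument already carried out in Corollary \ref{qreff}.

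The main obstacle is the single nontrivial structural fact feeding the forward direction, namely that $\Syz_2(k)$ is reflexive over a quasi-reduced ring; everything else is a standard invocation of Serre's freeness theorem together with the homological criterion for regularity. Since the reflexivity of second syzygies over quasi-reduced rings is already established earlier in the paper, the proof reduces to assembling these ingredients, and no genuinely new computation is required.
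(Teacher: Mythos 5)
Your proof is correct and follows essentially the same route as the paper: the forward direction rests on the reflexivity of second syzygies over quasi-reduced rings applied to $\Syz_2(k)$, and the converse on Serre's freeness theorem for reflexive modules over regular local rings of dimension at most two. The paper compresses both directions into the single observation that second syzygy modules are free exactly when $R$ is regular of dimension at most two, but the underlying ingredients are identical.
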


\begin{proof}
Over quasi-reduced rings, second syzygy modules  are reflexive modules  (see \cite[page 5809]{mas}). Trivially, second syzygy modules are free if and only if $R$ is regular and of dimension at most two.
\end{proof}

\begin{remark}The first item shows that  the local assumption is important. The second
	item shows that quasi-reduced assumption is needed.   
 \begin{enumerate}
	\item[i)] 
 Consider the ring  $R:=\mathbb{Z}/2\mathbb{Z}\oplus \mathbb{Z}/2\mathbb{Z}$ and $M:=\mathbb{Z}/2\mathbb{Z}\oplus 0$. It is projective and so reflexive. However, $M$ is not free. 	In order to find an example which is integral domain, we look at
	 $R:=\mathbb{Z}[\sqrt{5}]$ and $M:=(2,1+\sqrt{5})\lhd R$.
\item[ii)] See Observation 4.1.
\end{enumerate}
\end{remark}
One has $M^\ast=\Syz_2(D(M))$. Then $M$ is reflexive if $M\cong D_2(D_2(M))$ where $D_2(M):=\Syz_2(D(M))$. Following Auslander-Bridger, this is equivalent  to saying
that the map $$\Ext^1_R(D_2(D_2(M)),-)\stackrel{\cong}\lo \Ext^1_R(M,-)$$ induced by $M\to D_2(D_2(M))$ is an isomorphism.	 \begin{definition}Set $D_n(-):=\Syz_n(D(-))$. \begin{enumerate}
		\item[i)]
Following Auslander-Bridger, $M$ is called $n$-reflexive if  $\Ext^1(D_n(D_n(M)),-)\stackrel{\cong}\lo \Ext^1(M,-)$ induced by $M\to D_n(D_n(M))$ is an isomorphism.	\item[ii)] Following Masek, $R$ is called $n$-Gorenstein if  it satisfies $(\Se_n)$ and is Gorenstein in codimension $n-1$.\end{enumerate}
\end{definition}

\begin{proposition}
	Let $(R,\fm,k)$  be an
	$(n-1)$-Gorenstein  local ring. Then   $n$-reflexive modules are free if and only if $R$ is a regular ring of dimension at most $n$.
\end{proposition}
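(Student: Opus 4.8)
The plan is to promote the two-line argument behind Proposition \ref{4.1} from second syzygies to $n$-th syzygies, and from reflexivity to $n$-reflexivity. The single nontrivial input I would isolate first is the higher Auslander--Bridger/Masek fact that over an $(n-1)$-Gorenstein ring every $n$-th syzygy module is $n$-reflexive; this is precisely the $n$-th analogue of the statement \cite[page 5809]{mas} invoked in Proposition \ref{4.1}, where $n=2$, ``$1$-Gorenstein'' reads ``quasi-reduced'', and ``$n$-reflexive'' reads ``reflexive''. With that fact available, both implications are short.

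For the forward implication I would assume that $n$-reflexive modules are free and test the hypothesis on $\Syz_n(k)$. Since $R$ is $(n-1)$-Gorenstein, $\Syz_n(k)$ is $n$-reflexive by the fact above, hence free by assumption. But $\Syz_n(k)$ being free is equivalent to $\pd_R(k)\les n$, and because $\gd(R)=\pd_R(k)$ this forces $\gd(R)<\infty$; by the Auslander--Buchsbaum--Serre theorem $R$ is then regular with $\dim R=\gd(R)\les n$.

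For the converse I would assume $R$ is regular of dimension $d\les n$ and prove that then $D_n(D_n(M))=0$ for every finitely generated $M$. Indeed $\pd_R(\D(M))\les d\les n$, so the $n$-th syzygy $D_n(M)=\Syz_n(\D(M))$ is free (any $n$-th syzygy over a regular local ring of dimension at most $n$ is free); since the Auslander transpose of a free module vanishes, $D_n(D_n(M))=\Syz_n(\D(D_n(M)))=0$. Feeding this into the defining condition of $n$-reflexivity, the isomorphism $\Ext^1_R(D_n(D_n(M)),-)\stackrel{\cong}\lo\Ext^1_R(M,-)$ reduces to $0\cong\Ext^1_R(M,-)$, so $\Ext^1_R(M,-)$ vanishes identically and $M$ is projective, hence free over the local ring $R$.

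The main obstacle is the one ingredient I am quoting rather than reproving: that $n$-th syzygies are $n$-reflexive over $(n-1)$-Gorenstein rings. I would derive it from the Auslander--Bridger exact sequences comparing $M$ with $D_k(D_k(M))$ through the groups $\Ext^i_R(\D(M),R)$, using the condition $(\Se_{n-1})$ together with Gorensteinness in codimension $n-2$ to annihilate the $\Ext$ terms that would otherwise obstruct the map $M\to D_n(D_n(M))$ from inducing an isomorphism on $\Ext^1$. Everything else — the equivalence of ``$\Syz_n(k)$ free'' with $\pd_R(k)\les n$, the vanishing of the transpose of a free module, and the passage from $\Ext^1_R(M,-)=0$ to freeness — is routine and identical in spirit to the $n=2$ case.
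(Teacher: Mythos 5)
Your proof is correct, and the forward implication coincides with the paper's: both test the hypothesis on $\Syz_n(k)$, invoke the fact that over an $(n-1)$-Gorenstein ring an $n$-th syzygy is $n$-reflexive (the paper assembles this from \cite[Corollary 43]{mas} together with \cite[Theorem 2.17]{ABr}, exactly the ingredient you flag as your one quoted input), and conclude $\pd_R(k)\les n$. Where you genuinely diverge is the converse. The paper quotes \cite[Corollary 4.22]{ABr} --- over Gorenstein rings $n$-reflexivity coincides with being an $n$-th syzygy --- and then finishes with the depth count showing that $n$-th syzygies over a regular local ring of dimension at most $n$ are maximal Cohen--Macaulay, hence free. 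You instead work straight from the definition: since $\pd_R(\D(M))\les\dim R\les n$, the module $D_n(M)=\Syz_n(\D(M))$ is free (or zero), the transpose of a free module vanishes, so $D_n(D_n(M))=0$ and the defining isomorphism $\Ext^1_R(D_n(D_n(M)),-)\cong\Ext^1_R(M,-)$ forces $\Ext^1_R(M,-)=0$, i.e. $M$ is free. Your route is more self-contained --- it trades a nontrivial citation for a two-line computation and needs only that transposes and syzygies are well defined up to free summands, which is harmless since $\Ext^1$ kills free modules --- while the paper's route makes the structural point that $n$-reflexive $=$ $n$-syzygy over Gorenstein rings, which is reusable elsewhere. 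Both converses are valid; there is no gap in your argument.
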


\begin{proof}
	First, we assume that $n$-reflexive modules are free.
	Let $M:=\Syz_n(k)$. By definition, this is $n$-syzygy.
	Over $(n-1)$-Gorenstein   rings, $n$-syzygy modules  are $n$-torsionless (see \cite[Corollary 43]{mas}). In view of \cite[Theorem 2.17]{ABr} we observe that any $n$-torsionless module
	is $n$-reflexive. So, $M$ is $n$-reflexive. By the assumption, $M$ is free.
	From this, $\pd(k)\leq n$. Thus, $R$ is a regular ring of dimension at most $n$. 
	To see the converse part, let $R$ be regular of dimension at most $n$ and $M$ be $n$-reflexive.
	Recall from \cite[Corollary 4.22]{ABr} that over Gorenstein rings $n$-reflexivity coincides with $n$-syzygy.
	Since $R$ is regular  of dimension at most $n$ any $n$-syzygy module is free. From this,
	$M$ is free. 
\end{proof}

\begin{proposition}\label{mstac}
	Let $(R,\fm,k)$ be a local ring. Suppose any module of the from $M^\ast$ is totally reflexive. Then  $R$ is a Gorenstein ring of dimension at most two. 
\end{proposition}

\begin{proof} 
Recall that $M^\ast$ is isomorphic  to the second syzygy module of $\D(M)$. Due to the assumption, we know that $\Syz_2(\D(  M))$ is totally reflexive for every module $M$. 
In particular, $\Syz_2(\D(\D(  M)))$  is  totally reflexive  for every $R$-module $M$.
Since $\D (\D (M))\approx M$, $\Syz_2 (M)$ is  totally reflexive  for every $R$-module $M$. So, every $R$-module has G-dimension at most $2$. Therefore, $R$ is a Gorenstein ring of dimension at most $2$.
\end{proof}

\section{Reflexivity of multi-duals}

For simplicity, we set $M^{\ell\ast}:=M^{\overbrace{{\ast\ldots\ast}}^{\ell-times}}$ and we put $M^{\ell\ast}:=M$ if $\ell=0$. Reflexivity of $M^\ast$ is subject of \cite[1.3.6]{cam}.
Over any
non-Gorenstein artinian local ring, dual of the residue field is not reflexive. Despite of this, and as a
motivation, we recall  the following result of Vasconcelos:

\begin{fact}\label{5.1v}
Over quasi-normal rings, $M^\ast$ is reflexive. In particular, $M^{\ell\ast}$ is reflexive for all $\ell\in \mathbb{N}$.
\end{fact}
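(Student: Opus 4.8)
The plan is to deduce this directly from Vasconcelos' reflexivity criterion recalled in Observation \ref{m3COR} (Fact A): over a quasi-normal ring, a finitely generated module $N$ is reflexive if and only if every $R$-sequence of length at most two is an $N$-sequence. Thus it suffices to show that $N:=M^\ast=\Hom_R(M,R)$ enjoys this depth property, and in fact this holds over \emph{any} noetherian ring; the quasi-normal hypothesis enters only through the criterion itself.

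First I would treat sequences of length one. If $x\in\fm$ is $R$-regular, then applying the left-exact functor $\Hom_R(M,-)$ to $0\to R\stackrel{x}\lo R$ yields $0\to M^\ast\stackrel{x}\lo M^\ast$, so $x$ is $M^\ast$-regular. The substance is in length two. Given an $R$-sequence $x,y$, apply $\Hom_R(M,-)$ to $0\to R\stackrel{x}\lo R\to R/xR\to 0$. Left-exactness gives an exact sequence $0\to M^\ast\stackrel{x}\lo M^\ast\to \Hom_R(M,R/xR)$, whence $M^\ast/xM^\ast$ embeds into $\Hom_R(M,R/xR)$. Since $x\cdot(R/xR)=0$, every $R$-homomorphism $M\to R/xR$ factors through $M/xM$, so $\Hom_R(M,R/xR)\cong\Hom_{R/xR}(M/xM,R/xR)$. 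As $x,y$ is an $R$-sequence, $y$ is regular on $R/xR$, and the length-one argument applied over the ring $R/xR$ shows that $y$ is regular on $\Hom_{R/xR}(M/xM,R/xR)$; hence $y$ is regular on its submodule $M^\ast/xM^\ast$. Therefore $x,y$ is an $M^\ast$-sequence, and the criterion gives that $M^\ast$ is reflexive.

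The main obstacle I anticipate is precisely this length-two step: one must transport the regularity of $y$ through the duality, and the key device is the embedding $M^\ast/xM^\ast\hookrightarrow\Hom_{R/xR}(M/xM,R/xR)$ coming from the left-exact sequence above, which reduces the length-two claim to a length-one claim over $R/xR$. Everything else is formal left-exactness of $\Hom_R(M,-)$.

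For the \emph{in particular} statement I read $\mathbb{N}=\{1,2,\ldots\}$. For each $\ell\geq 1$ the module $M^{\ell\ast}=(M^{(\ell-1)\ast})^\ast$ is the $R$-dual of a finitely generated module, so the first part, applied to $M^{(\ell-1)\ast}$ in place of $M$, shows at once that $M^{\ell\ast}$ is reflexive. One cannot include $\ell=0$, since $M^{0\ast}=M$ itself need not be reflexive over a general quasi-normal ring.
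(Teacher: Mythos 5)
Your argument is correct. Note, however, that the paper does not prove Fact \ref{5.1v} at all: it is quoted as a known result of Vasconcelos, with no argument supplied. So the useful comparison is twofold. First, your route derives this Vasconcelos result from the \emph{other} Vasconcelos result the paper recalls (Fact A in Observation \ref{m3COR}, the depth criterion over quasi-normal rings), by the standard computation that duals inherit regular sequences of length at most two: left-exactness of $\Hom_R(M,-)$ handles length one, and the embedding $M^\ast/xM^\ast\hookrightarrow\Hom_{R/xR}(M/xM,R/xR)$ reduces length two to length one over $R/xR$. This is sound, self-contained given what the paper already cites, and makes transparent that the dual's depth property holds over any noetherian ring while quasi-normality is used only in the criterion. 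Second, when the paper does prove the parallel statement for quasi-reduced rings (Fact \ref{mas}), it takes a genuinely different route: from a minimal presentation $F_1\to F_0\to M\to 0$ one gets $0\to M^\ast\to F_0^\ast\to F_1^\ast\to \D(M)\to 0$, exhibiting $M^\ast$ as a second syzygy, and then Masek's theorem identifying $2$-syzygies with $2$-torsionless modules over rings Gorenstein in small codimension gives reflexivity. That syzygy argument generalizes more readily (it is what underlies the $n$-reflexive statements in \S 4.5), whereas yours is more elementary and stays entirely within depth-counting. Your final caveat about $\ell=0$ is consistent with the paper's convention $M^{0\ast}:=M$ and its use of $\mathbb{N}_0$ versus $\mathbb{N}$ elsewhere. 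One minor point worth a sentence in a polished write-up: to say a sequence is an $M^\ast$-sequence one also needs $M^\ast/(x,y)M^\ast\neq 0$ when $M^\ast\neq 0$, which is immediate from Nakayama in the local setting the paper works in.
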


The following extends \cite[Page 518]{Lam} where Lam worked with $\frac{k[x,y]}{(x,y)^2}$ and $M:=k$.

\begin{corollary} \label{ca} Let  $0\neq M$ be any nonfree
over one of the following local rings:
 \begin{enumerate}
\item[i)]  $R:=\frac{k[X_1,\ldots,X_m]}{(X_1,\ldots,X_m)^n}$ with $m>1$, or
\item[ii)] $R$ is such that $\fm^2=0$ and $\mu(\fm)>1$, or
\item[iii)] $R:=\frac{\mathbb{Q}[X,Y]}{(X^2,XY,Y^3)}$.
\end{enumerate}
Then $M^{\ell\ast}$ is not reflexive
  for all $\ell\in \mathbb{N}_0$.
  \end{corollary}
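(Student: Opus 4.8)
The plan is to reduce the whole statement to two ingredients already in hand for these three rings: that every reflexive module over them is free, and that each of them has depth zero, so that freeness can be transported from a dual down to the original module. Once both are available, a short descending induction on $\ell$ finishes the job.

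First I would record the two structural facts. All three rings are artinian local (the maximal ideal is nilpotent in (i), $\fm^2=0$ in (ii), and $\fm^3=0$ in (iii)), so in every case $\depth(R)=0$. For the freeness of reflexive modules, cases (i) and (ii) are exactly Corollary \ref{lam}, while case (iii) is Example \ref{2113}; thus over each of these rings a reflexive module is automatically finitely generated and free.

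Then I would argue by contradiction. Suppose $M^{\ell\ast}$ is reflexive for some $\ell\in\mathbb{N}_0$. By the freeness fact just recalled, $M^{\ell\ast}$ is free. Now I invoke Ramras' descent, \cite[Lemma 2.6]{ram}: over a local ring of depth zero, freeness of a dual $N^\ast$ forces freeness of $N$. Writing $M^{\ell\ast}=(M^{(\ell-1)\ast})^\ast$, freeness of $M^{\ell\ast}$ yields freeness of $M^{(\ell-1)\ast}$; iterating this peeling-off exactly $\ell$ times produces in turn that $M^{(\ell-1)\ast},\ldots,M^{\ast},M$ are all free, contradicting the nonfreeness of $M$. (For $\ell=0$ the hypothesis says directly that $M$ is reflexive, hence free, so the contradiction is immediate with no descent needed.)

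The one delicate point — really the only place where something could go wrong — is the direction and the hypothesis of the descent step. Over rings of positive depth, freeness of $N^\ast$ does \emph{not} imply freeness of $N$ (take $N=k\oplus R$), so the depth-zero hypothesis is genuinely essential and must be used at each stage of the induction rather than assumed for free. I would also note, for completeness, that every module appearing in the chain $M^{\ell\ast},\ldots,M^{\ast}$ is finitely generated, being a dual over an artinian ring, so the cited freeness results apply at each step; this is automatic and poses no real obstacle.
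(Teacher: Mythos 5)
Your proof is correct and follows essentially the same route as the paper's: both reduce to the fact that reflexive modules over these rings are free (via the $\BNSI$ results of \S 4, packaged in Corollary \ref{lam} and Example \ref{2113}) and then use Ramras' depth-zero descent \cite[Lemma 2.6]{ram} to push freeness from $M^{\ell\ast}$ down to $M$, contradicting nonfreeness. The only cosmetic difference is that the paper reduces by induction to the case $\ell=1$ and records separately (via associated primes) that duals of nonzero modules over artinian rings are nonzero, whereas you peel off the duals one at a time; the substance is identical.
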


  \begin{proof}We may assume $\ell\in \mathbb{N}$.
  We argue by induction on $\ell$. Without loss of the generality we may assume that $\ell=1$.
   \begin{enumerate}
\item[Claim]  A) Let $(S,\fn)$ be an artinian  local ring and $N\neq 0$ be  finitely generated. Then $N^{\ast}$ is nonzero. Indeed,
since $N$ is finitely generated and nonzero, $$\Ass(\Hom(N,S))=\Supp(N)\cap\Ass(S)=\{\fn\}.$$ In particular, $N^{\ast}\neq 0$.  \end{enumerate}

In view of Claim A),  we see that $M^{\ast}$ is  nonzero. One may find easily that $M^{\ast}$ is finitely generated.
  Suppose on the contradiction that $M^{\ast}$ is  reflexive. The ring $R$ is $\BNSI$ (see \S 4). By Fact \ref{www}, $M^{\ast}$ is free. Since $0\leq\depth(R)\leq \dim(R)=0$ we have  $\depth(R)=0$.
 If depth of a ring is zero, then
 freeness descents from $(-)^\ast$
to $(-)$, see    \cite[Lemma 2.6]{ram}.
This immediately implies that $M$  is free which is excluded from the assumption. This is a contradiction.
  \end{proof}

The above proof shows:

\begin{observation}  Let $(R,\fm)$ be  artinian $\BNSI$  and $M$ be nonfree. Then  $M^{\ell\ast}$ is  not reflexive.
\end{observation}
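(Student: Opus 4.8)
The plan is to package the argument of Corollary~\ref{ca} in its natural generality, leaning on exactly two ingredients already in hand: Fact~\ref{www}, which over a $\BNSI$ ring upgrades weak reflexivity (hence reflexivity) to freeness, and Ramras' depth-zero descent \cite[Lemma 2.6]{ram}, by which freeness of $N^\ast$ forces freeness of $N$. Stringing the latter together yields a finite descending induction that terminates at $M$ itself.

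Before the main step I would record the routine bookkeeping. As $R$ is artinian it is noetherian, so $\Hom_R(N,R)$ is finitely generated whenever $N$ is; consequently every iterated dual $M^{j\ast}$ with $0\le j\le\ell$ is finitely generated and the identity $M^{j\ast}=(M^{(j-1)\ast})^\ast$ holds on the nose. Being artinian also gives $\dim R=0$ and therefore $\depth R=0$, which is precisely the hypothesis that the descent lemma requires.

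Now for the heart of the matter. Assume, for contradiction, that $M^{\ell\ast}$ is reflexive. A reflexive module is weakly reflexive, so Fact~\ref{www} makes $M^{\ell\ast}$ free. Writing $M^{\ell\ast}=(M^{(\ell-1)\ast})^\ast$ and invoking the depth-zero descent, I get $M^{(\ell-1)\ast}$ free; repeating the same step down the chain $M^{(\ell-2)\ast},\dots,M^\ast$ and finally $M$, each link reading ``$N^\ast$ free $\Rightarrow$ $N$ free'', I conclude that $M$ is free, against the hypothesis. For $\ell=0$ no descent is needed: reflexivity of $M$ alone already forces freeness through Fact~\ref{www}.

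I do not anticipate a genuine obstacle; the content is entirely in the two cited facts and the statement is really their bookkeeping. The only points needing a moment's care are that each $M^{j\ast}$ be finitely generated (so Fact~\ref{www} and the descent lemma apply verbatim) and the degenerate chance that some intermediate dual vanishes --- but the zero module is free, so the descent still propagates and would drive $M=0$, once more contradicting nonfreeness. Should one prefer an ascending induction on $\ell$, the same two facts suffice: apply the inductive hypothesis to $M^\ast$ when it is nonfree, and the descent lemma when $M^\ast$ is free, both branches terminating in the freeness of $M$.
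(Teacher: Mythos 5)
Your proposal is correct and follows essentially the same route as the paper: the paper derives this observation directly from the proof of Corollary \ref{ca}, which likewise assumes $M^{\ell\ast}$ reflexive, applies Fact \ref{www} to get freeness, and then uses Ramras' depth-zero descent (organized there as an induction on $\ell$ reduced to the case $\ell=1$, which is just your descending chain read in the other direction) to force $M$ free, a contradiction.
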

  It may be natural to ask:

\begin{question}
Let $R$ be  artinian non-Gorenstein  and $M$ be nonfree. When is  $M^{\ell\ast}$  (non-)reflexive?
\end{question}

	Here, is the answer:

	\begin{proposition}\label{gf2}
  Let $(R,\fm)$ be artinian and $M$ be finitely generated. If  $M^{\ell\ast}$  is reflexive
  for some $\ell>1$, then $M^{n\ast}$  is reflexive
  for all $n>0$.
\end{proposition}

\begin{proof}
First, assume $n\geq \ell$. Taking $(n-\ell)^{th}$ times dual from $M^{\ell\ast}\stackrel{\cong}\lo M^{(\ell+2)\ast}$
yields that  $M^{n\ast}\stackrel{\cong}\lo M^{(n+2)\ast}$. From this,
$M^{n\ast}$  is reflexive. Now, assume that $n< \ell$. We set $N:=M^{(\ell-2)\ast}$. Then $N^{2\ast}$ is reflexive. We are going to show that $N^\ast$ is reflexive. 
By definition, the natural map $\phi:=\phi_{N^{2\ast}}:N^{2\ast}\lo N^{4\ast}$
is an isomorphism. This gives a map $\varphi:=N^{4\ast}\lo N^{2\ast}$
such that $\varphi\phi=\id_{N^{2\ast}}.$ 
Since dual modules are torsionless,
there is an exact sequence $$0\lo N^\ast\stackrel{\phi_1}\lo N^{3\ast}\lo C\lo 0,$$
where $\phi_1:=\phi_{N^\ast}$. Taking dual, it yields that $$0\lo C^\ast\lo N^{4\ast}\stackrel{\phi^\ast}\lo N^{2\ast}.$$It is easy to see $\phi^\ast=\varphi$. In particular,
$C^\ast=\ker(\phi^\ast)=0$. Thus,
$$\emptyset=\Ass(C^\ast)=\Ass(\Hom(C,R))=\Supp(C)\cap \Ass(R)=\Supp(C)\cap\{\fm\},$$this is equivalent
to saying that $\fm\notin \Supp(C)$, i.e., $C=0$. Consequently,  $N^\ast\stackrel{\phi_1}\lo N^{3\ast}$ is an isomorphism. So, $M^{n\ast}=N^\ast$  is reflexive,
as claimed.
\end{proof}

 Suppose the ring is artinian and $M$ is nonfree. It is  easy to see that $\limsup \frac{\ell(M^{n\ast})}{\type(R)^{n}}\leq \ell(M).$
 Indeed, via induction on $n$, it is  enough to show $\ell(M^{\ast})\leq \type(R)  \ell(M)$.  To show this it is enough to use induction on $\ell(M)$, or see Menzin's PhD-thesis.
If the module is simple or $R$ is Gorenstein, then $\lim_{n\to\infty} \frac{\ell(M^{n\ast})}{\type(R)^{n}}=\ell(M)$. We ask:

 \begin{question}
 When does the limit $\lim_{n\to\infty} \frac{\ell(M^{n\ast})}{\type(R)^{n}}$ exist? when is it equal to $\ell(M)$?
 \end{question}

 In general, this is not the case:

 \begin{example}
Let $(R,\fm)$ be    artinian  non-Gorenstein and $M$ be free. Then $\lim_{n\to\infty} \frac{\ell(M^{n\ast})}{\type(R)^{n}}=0.$
\end{example}

\begin{proof}
Recall that $M^{n\ast}=M$ for all $n$, since $M$ is free. Also, $\type(R)>1$ because $R$ is  not Gorenstein. Thus, $\lim_{n\to\infty} \frac{\ell(M^{n\ast})}{\type(R)^{n}}=\lim_{n\to\infty} \frac{\ell(M)}{\type(R)^{n}}=0.$
\end{proof}

In particular,
we consider to modules with no free
direct summands:

\begin{example}
Let $R:=\frac{\mathbb{Q}[X,Y]}{(X^2,XY,Y^3)}$ and $M:=\fm$. Then $\lim_{n\to\infty} \frac{\ell(M^{n\ast})}{\type(R)^{n}}\neq\ell(M)$.
\end{example}

\begin{proof}
Note that $R=\mathbb{Q}\oplus \mathbb{Q}x\oplus \mathbb{Q}y\oplus \mathbb{Q}y^2$ and $(0:\fm)=(x,y^2)$. From this,  type of $R$ is $2$. Also, $\ell(\fm)=3$. Since $xy=0$, we have
$$\fm=(x,y)=xR\oplus yR=R/(0:x)\oplus R/(0:y)=R/(x,y)\oplus R/(x,y^2)\quad(\ast)$$Now, we compute dual of $R/(x,y^2)$:$$\Hom_R(R/(x,y^2),R)= \{r:r(x,y^2)=0\}=(x,y)=\fm\quad(\ast,\ast)$$
We combine $(\ast)$ along with $(\ast,\ast)$ to see that
 $\fm^\ast=\mathbb{Q}^\ast\oplus\fm$.  We take another dual to see $$\fm^{\ast\ast}=\mathbb{Q}^{\ast\ast}\oplus\fm^\ast=\mathbb{Q}^{\ast\ast}\oplus \mathbb{Q}^\ast\oplus\fm.$$ By   induction,
$$\fm^{n\ast}=\mathbb{Q}^{n\ast}\oplus \mathbb{Q}^{(n-1)\ast}\oplus\ldots\oplus\mathbb{Q}^\ast\oplus\fm\quad(\ast\ast\ast)$$Recall that $\mathbb{Q}^\ast=\oplus_{\type(R)} \mathbb{Q}$ and that $\mathbb{Q}^{n\ast}=\oplus_{\type(R)^n} \mathbb{Q}=\mathbb{Q}^{2^n}$. We put this along with $(\ast\ast\ast)$ to see
$$\ell(\fm^{n\ast})=\sum_{j=1}^n 2^j+\ell(\fm)=\sum_{j=0}^n 2^j+2=\frac{1-2^{n+1}}{1-2}+2=2^{n+1}+1.$$ Consequently,
$\lim_{n\to\infty} \frac{\ell(M^{n\ast})}{\type(R)^{n}}=\lim_{n\to\infty} \frac{2^{n+1}+1}{2^{n}}=2<3=\ell(\fm).$
\end{proof}

\begin{example}
Let $R:=\frac{\mathbb{Q}[X,Y]}{(X^2,XY,Y^2)}$ and $M:=\fm$. Then $\lim_{n\to\infty} \frac{\ell(M^{n\ast})}{\type(R)^{n}}=\ell(M)$.
\end{example}

\begin{proof}
Note that $R=\mathbb{Q}\oplus \mathbb{Q}x\oplus \mathbb{Q}y$ and $(0:\fm)=(x,y)$. From this, type of $R$ is $2$. Since $xy=0$, we have
$$\fm=(x,y)=xR\oplus yR=R/(0:x)\oplus R/(0:y)=R/(x,y)\oplus R/(x,y)=\mathbb{Q}\oplus \mathbb{Q}.$$By an easy induction,
 $\fm^{n\ast}=\mathbb{Q}^{n\ast}\oplus \mathbb{Q}^{n\ast}$. Recall that $\mathbb{Q}^\ast=\oplus_{\type(R)} \mathbb{Q}$ and that $\mathbb{Q}^{n\ast}=\mathbb{Q}^{2^n}$.
This means that $\ell(\fm^{n\ast})=2^{n+1}.$ Consequently,
$$\lim_{n\to\infty} \frac{\ell(M^{n\ast})}{\type(R)^{n}}=\lim_{n\to\infty} \frac{2^{n+1}}{2^{n}}=2=\ell(\fm),$$as claimed.
\end{proof}

\begin{proposition}\label{lim2m}
Let  $(R,\fm)$ be such that $\fm^2=0$ and $M$ be a finitely generated module with no free
direct summands. Then $\lim_{n\to\infty} \frac{\ell(M^{n\ast})}{\type(R)^{n}}=\beta_0(M)$.
\end{proposition}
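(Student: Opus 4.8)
The plan is to compute the asymptotic length of the iterated duals $M^{n\ast}$ directly, using the special structure imposed by $\fm^2=0$. Over such a ring every module is a $k$-vector space after killing $\fm$, and duals are controlled by the socle. The key structural fact I would invoke is the decomposition used already in Discussion (the Gover argument): any finitely generated $M$ over a ring with $\fm^2=0$ splits as $M=M_t\oplus F$ with $F$ free and $M_t^\ast=\Hom(M_t,\fm)$, where $M_t$ has no free direct summand. Since $M$ is assumed to have \emph{no} free direct summand, $M=M_t$ and $M^\ast=\Hom(M,\fm)$. Because $\fm$ is a $k$-vector space of dimension $\mu(\fm)=:t$, and $\type(R)=\dim_k(0:\fm)=\dim_k\fm=t$ (as $(0:\fm)=\fm$ when $\fm^2=0$), we get $M^\ast=\oplus_t\Hom(M,k)$, i.e.\ $M^\ast\cong\oplus_t M^{\vee}$ where $M^\vee:=\Hom_k(M,k)$.

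\textbf{Reducing to $k$-linear duality.} The heart of the computation is that iterated $R$-duality of a module with no free summand reduces to iterated $k$-vector-space duality, multiplied by a factor of $t=\type(R)$ at each step. First I would verify that $V:=\Hom(M,k)$ has no free $R$-direct summand (this is the point Gover checks: a free summand of $V$ would force one in $M$ after dualizing), so that the same formula $N^\ast=\oplus_t\Hom(N,k)$ applies again to $N=M^\ast$. Iterating, one obtains
\[
\ell(M^{n\ast})=t^{\,n}\cdot\dim_k\big(\underbrace{\Hom_k(\cdots\Hom_k}_{n}(M,k)\cdots,k)\big)=t^{\,n}\cdot\dim_k M,
\]
since $k$-linear duality preserves dimension and $\dim_k M=\ell(M)$ only when $\fm M=0$—so the accurate bookkeeping is that $k$-dualizing returns a vector space of the \emph{same} $k$-dimension as the previous dual, and the relevant dimension stabilizes. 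The correct invariant that survives is $\beta_0(M)=\mu(M)=\dim_k(M/\fm M)$: I expect that after the first dualization the module $M^\ast$ becomes a $k$-vector space (killed by $\fm$), because $\fm\cdot\Hom(M,\fm)\subseteq\Hom(M,\fm^2)=0$. Thus $M^\ast$ is annihilated by $\fm$, making $M^\ast$ and all higher duals genuine $k$-vector spaces, for which length equals $k$-dimension.

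\textbf{The clean computation.} Once $M^\ast$ is a $k$-vector space, say of dimension $d_1$, every subsequent dual is $\oplus_t$ of its $k$-dual, so $d_{n}=t\cdot d_{n-1}$, giving $\ell(M^{n\ast})=t^{\,n-1}d_1$ for $n\ge1$. It remains to pin down $d_1=\dim_k M^\ast=\dim_k\Hom(M,\fm)=t\cdot\dim_k\Hom(M,k)=t\cdot\mu(M)=t\cdot\beta_0(M)$, where $\Hom_R(M,k)=\Hom_k(M/\fm M,k)$ has $k$-dimension $\mu(M)=\beta_0(M)$. Therefore $\ell(M^{n\ast})=t^{\,n}\beta_0(M)$ for $n\ge1$, and dividing by $\type(R)^n=t^n$ yields exactly $\beta_0(M)$ for every $n\ge1$, so the limit is $\beta_0(M)$.

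\textbf{Main obstacle.} The delicate point—and the step I would write out most carefully—is the claim that $M^\ast$ is annihilated by $\fm$ and that the no-free-summand property propagates to each dual, so that the recursion $d_n=t\,d_{n-1}$ holds uniformly rather than only eventually. The annihilation is immediate from $\fm^2=0$, but one must also confirm that passing to $\Hom(-,k)$ does not reintroduce a free summand and that $\Hom_R(M,\fm)\cong\oplus_t\Hom_k(M/\fm M,k)$ is canonical (not merely a dimension count), which follows because $\fm$ is a trivial $R$-module isomorphic to $k^t$. The Examples immediately preceding (with $\type(R)=2$) serve as sanity checks: for $M=\fm$ over $\mathbb{Q}[X,Y]/(X^2,XY,Y^2)$ one has $\beta_0(\fm)=2=\ell(\fm)$ matching the limit $2$, confirming the formula in the $\fm^2=0$ case.
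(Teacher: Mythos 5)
Your proof is correct, and it reaches the same key intermediate facts as the paper --- namely $\ell(N^\ast)=\ell(\fm)\,\mu(N)$, $\fm N^\ast=0$, and the persistence of the no-free-summand condition under dualizing --- but by a genuinely more elementary route. The paper establishes $\ell(M^\ast)=\ell(\fm)\mu(M)$ by citing Menzin, proves $\fm M^\ast=0$ by embedding the torsion-less module $M^\ast$ into a free module, realizing it as a first syzygy, stripping free summands via Krull--Schmidt, and invoking Ramras's Lemma 2.6 to rule out free summands of $M^\ast$; you instead observe that the no-free-summand hypothesis forces every map $M\to R$ to land in $\fm$, so $M^\ast=\Hom_R(M,\fm)\cong\Hom_k(M/\fm M,k)^{\oplus t}$ with $t=\dim_k\fm=\type(R)$ (using $(0:\fm)=\fm$), which delivers the length formula, the annihilation $\fm M^\ast\subseteq\Hom_R(M,\fm^2)=0$, and the absence of free summands of $M^\ast$ (a nonzero module killed by $\fm$ cannot contain a copy of $R$) all in one stroke; the recursion $\ell(M^{(n+1)\ast})=t\cdot\ell(M^{n\ast})$ and the conclusion $\ell(M^{n\ast})=t^n\beta_0(M)$ for $n\ge 1$ then follow exactly as in the paper. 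What your approach buys is self-containedness --- no appeal to Menzin, Ramras, or Krull--Schmidt --- and it even shows the quotient $\ell(M^{n\ast})/\type(R)^n$ is \emph{constant} equal to $\beta_0(M)$ for all $n\ge1$, not merely convergent. The only blemish is presentational: the displayed formula in your middle paragraph asserting $\ell(M^{n\ast})=t^n\dim_kM$ is wrong as written (the correct invariant is $\mu(M)$, not $\dim_kM$, since $M$ itself need not be killed by $\fm$), but you immediately flag and correct this, and your final computation is the right one.
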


\begin{proof}We may assume that $\fm\neq 0$.
Due to \cite[Proposition 1]{men}, $\ell(M^{\ast})= \ell(\fm)\mu(M)$.
We remark that $M^\ast$ is torsion-less. It is submodule of a free module $F$.
Let $f:M^\ast\hookrightarrow F$. Then $M^\ast$ is a first syzygy  of $\coker(f)$ with respect to
a free resolution of $\coker(f)$. Let $\Syz_1(\coker(f))$ be the first syzygy  of $\coker(f)$ with respect to
the minimal free resolution of $\coker(f)$. There are  free modules $R^n$ and   $R^m$ $(m\leq n)$ such that
$$\Syz_1(\coker(f))\oplus R^m\simeq M^\ast \oplus R^n.$$
 By Krull-Schmidt theorem over complete rings, 
 $$M^\ast\simeq \Syz_1(\coker(f)) \oplus R^{n-m}.$$
We recall from \cite[Lemma 2.6]{ram} that $M^\ast$  has   no free
direct summands, because $M $  has   no free
direct summands and depth of the ring is zero. We conclude that $M^\ast\simeq \Syz_1(\coker(f))$, and so that $M^\ast\subset \fm F$ for some free module $F$. In particular,
$\fm M^{\ast}\subset\fm^2F=0$. Hence $$\mu(M^\ast)=\dim (\frac{M^\ast}{\fm M^{\ast}})=\dim  M^\ast=\ell(M^\ast).$$
By repeating this, $M^{n\ast}$ has   no free
direct summands for all $n\geq1$.  Also,
$\ell(M^{n\ast})=\mu(M^{n\ast})$ for all $n\geq1$. By the mentioned result of Menzin, $\ell(M^{(n+1)\ast})= \ell(\fm)\mu(M^{n\ast})$. An easy induction implies that
$$\ell(M^{(n+1)\ast})= \ell(\fm)^n\mu(M^{\ast}).$$   We use $\fm^2=0$ to deduce $\ell(\fm)=\type(R)$. Therefore,
$$\lim_{n\to\infty} \frac{\ell(M^{(n+1)\ast})}{\type(R)^{n+1}}=\lim_{n\to\infty} \frac{\type(R)^{n}\mu(M^{\ast})}{\type(R)^{n+1}}=\frac{\mu(M^{\ast})}{\type(R)}=\frac{\ell(M^\ast)}{\ell(\fm)}=\mu(M)=\beta_0(M),$$as claimed.
\end{proof}

\begin{corollary}
In addition to Proposition \ref{lim2m} assume that $M$ is torsion-less. Then $$\lim_{n\to\infty} \frac{\ell(M^{n\ast})}{\type(R)^{n}}=\ell(M).$$
\end{corollary}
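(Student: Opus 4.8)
The plan is to reduce everything to the single identity $\beta_0(M)=\ell(M)$. Indeed, Proposition \ref{lim2m} already supplies $\lim_{n\to\infty}\frac{\ell(M^{n\ast})}{\type(R)^n}=\beta_0(M)$, so it suffices to prove that for a torsion-less $M$ with no free direct summand over a ring with $\fm^2=0$ one has $\beta_0(M)=\ell(M)$, equivalently $\fm M=0$ (that is, $M$ is a $k$-vector space). Note $\beta_0(M)=\dim_k(M/\fm M)=\mu(M)$, while $\ell(M)=\dim_k M$, so the two agree precisely when $\fm M=0$.

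First I would use torsion-lessness to place $M$ inside a free module: since $M$ is finitely generated and $\varphi_M$ is injective, there is a monomorphism $f\colon M\hookrightarrow F$ with $F$ finitely generated and free. This is exactly the situation that the proof of Proposition \ref{lim2m} analyzes for the dual $M^\ast$, and the point is that here we are handed torsion-lessness of $M$ itself, so I would simply run that argument with $M$ playing the role of $M^\ast$.

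Concretely, I would realize $M$ as a first syzygy of $\coker(f)$, compare it with the minimal first syzygy $\Syz_1(\coker(f))$, and invoke the Krull--Schmidt theorem (over the complete ring) together with \cite[Lemma 2.6]{ram}. Because $\fm^2=0$ forces $\depth R=0$ (the case $\fm=0$ being trivial, giving $R=k$), the cited lemma guarantees that a module with no free direct summand retains this property after passing to syzygies; hence $M\simeq\Syz_1(\coker(f))$ and therefore $M\subseteq\fm F$ for some free $F$. Consequently $\fm M\subseteq\fm^2 F=0$, so $M$ is annihilated by $\fm$ and is thus a $k$-vector space.

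With $\fm M=0$ in hand we get $\ell(M)=\dim_k M=\mu(M)=\beta_0(M)$, and substituting this into Proposition \ref{lim2m} yields the claimed limit. The only genuine content lies in the third paragraph, where the syzygy and Krull--Schmidt bookkeeping forces $M$ into $\fm F$; this is the step where I expect to have to be careful, namely to verify that the no-free-summand hypothesis (available by assumption) and the completeness needed for Krull--Schmidt are genuinely in force for $M$ itself rather than merely for a dual. Everything surrounding that step is formal.
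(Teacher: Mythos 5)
Your proposal is correct and is essentially the paper's intended argument: the corollary is stated without proof, and the natural route is exactly yours, namely to rerun the syzygy/Krull--Schmidt step from the proof of Proposition \ref{lim2m} on $M$ itself (torsion-lessness supplying the embedding $M\hookrightarrow F$, the no-free-summand hypothesis being part of the proposition's assumptions) to get $M\subseteq\fm F$, hence $\fm M=0$ and $\beta_0(M)=\mu(M)=\ell(M)$. The citation of \cite[Lemma 2.6]{ram} is superfluous here since the no-free-summand property is assumed for $M$ directly rather than deduced for a dual, but this does not affect correctness.
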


Let $(R,\fm)$ be non-Gorenstein   such that $\fm^2=0$ and $M$ be a module with no free
direct summands.  Note that $\ell(R)\neq 2$, i.e., $(\ell(R)-1)^2-1\neq 0.$ It follows by  \cite[Proposition 1]{men} that
$$\lim_{i\to\infty}\frac{\ell(\Ext^i_R(M,R))}{\ell(R)^i}=\lim_{i\to\infty}\frac{(\ell(R)-1)^{i-2}\beta_1(M)((\ell(R)-1)^2-1)}{\ell(R)^i}=\beta_1(M).$$

\begin{question}Let $(R,\fm)$ be an artinian non-Gorenstein ring.
When is the limit  $\lim_{i\to\infty}\frac{\ell(\Ext^i_R(M,R))}{\ell(R)^i}$  exist?
\end{question}

The following extends the \textit{Third Dual Theorem}, see \cite[19.38]{Lam} (and it is well-known over abelian groups, see \cite[Ex. 12.11.(3)]{fuchs}):

\begin{proposition}\label{gfu}
Let $(R,\fm)$ be a  local ring and $M$ be weakly reflexive. Then $M^{\ell\ast}$ is reflexive  for all $\ell\in \mathbb{N}$.
\end{proposition}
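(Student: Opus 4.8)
The plan is to isolate one formal statement and then iterate it: \emph{if $N$ is any weakly reflexive module, then $N^\ast$ is reflexive.} Granting this, the proposition follows by induction on $\ell$. The base case $\ell=1$ is the claim applied to $N=M$. For $\ell\geq 2$ the inductive hypothesis gives that $M^{(\ell-1)\ast}$ is reflexive, hence in particular weakly reflexive (an isomorphism is surjective), so applying the claim to $N=M^{(\ell-1)\ast}$ shows that $M^{\ell\ast}=(M^{(\ell-1)\ast})^\ast$ is reflexive. Thus everything reduces to the single lemma.

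To prove the lemma I would work with the two canonical maps attached to $N$. Besides $\varphi_N\colon N\to N^{\ast\ast}$ there is its dual $(\varphi_N)^\ast\colon N^{\ast\ast\ast}\to N^\ast$, which runs \emph{opposite} to the reflexivity map $\varphi_{N^\ast}\colon N^\ast\to N^{\ast\ast\ast}$ of $N^\ast$. The first step is the purely formal triangle identity
$$(\varphi_N)^\ast\circ\varphi_{N^\ast}=\id_{N^\ast},$$
checked by evaluating both sides on $f\in N^\ast$ and $m\in N$ and unwinding the definitions; it holds over any ring and needs no finiteness hypothesis. This identity already shows that $\varphi_{N^\ast}$ is a split monomorphism and, dually, that $(\varphi_N)^\ast$ is a split epimorphism (so $N^\ast$ is always torsion-less, which is the content behind the \emph{Third Dual Theorem}).

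The remaining step is where the hypothesis enters. Since $N$ is weakly reflexive, $\varphi_N$ is surjective; applying the left-exact contravariant functor $(-)^\ast=\Hom_R(-,R)$ turns this surjection into the \emph{injection} $(\varphi_N)^\ast$. Combined with the previous paragraph, $(\varphi_N)^\ast$ is simultaneously injective and surjective, hence bijective. Feeding this back into the triangle identity forces $\varphi_{N^\ast}$ to equal the inverse of $(\varphi_N)^\ast$, so $\varphi_{N^\ast}$ is bijective and $N^\ast$ is reflexive, as desired.

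I expect the only point requiring care, rather than a genuine obstacle, to be bookkeeping: one must keep straight that $\varphi_{N^\ast}$ and $(\varphi_N)^\ast$ run in opposite directions, verify the triangle identity with the correct evaluations, and apply contravariant left-exactness in the right direction (a surjection dualizes to an injection, not the reverse). Because the whole argument is diagram-theoretic, no finiteness or ring-theoretic input is used, consistent with the classical abelian-group version cited from Fuchs.
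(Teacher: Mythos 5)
Your proof is correct, and its skeleton is the same as the paper's: reduce to $\ell=1$ by induction (the paper invokes the Third Dual Theorem for this reduction), then combine the triangle identity $(\varphi_M)^\ast\circ\varphi_{M^\ast}=\id_{M^\ast}$ with the fact that dualizing the surjection $\varphi_M$ yields an injection $(\varphi_M)^\ast$. The only real divergence is the endgame. The paper uses the splitting to write $M^{\ast\ast\ast}\simeq M^\ast\oplus\ker((\varphi_M)^\ast)\simeq M^\ast$ and then appeals to \cite[1.1.9(b)]{kris}, which says that a \emph{finitely generated} module abstractly isomorphic to its bidual is reflexive; this last citation genuinely needs noetherianity and finite generation (the paper itself notes in \S 5 that a non-reflexive abelian group can satisfy $G\simeq G^{\ast\ast}$). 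You instead observe that $(\varphi_M)^\ast$ is both a split epimorphism and injective, hence bijective, so $\varphi_{M^\ast}$ is its inverse and is itself bijective. That shows the \emph{canonical} map is an isomorphism directly, bypasses the cited criterion, and therefore proves the statement for arbitrary (not necessarily finitely generated) modules over arbitrary rings --- a small but genuine strengthening, and arguably the cleaner way to finish.
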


\begin{proof} In view of the Third Dual Theorem, we need to proof the claim only for $\ell:=1$.
To this end, we take dual from  $M\stackrel{\varphi_M}\lo M^{\ast\ast}\to 0$  to observe that  $0\to M^{\ast\ast\ast}\stackrel{(\varphi_M)^\ast}\lo M^\ast$.
We look at
$\varphi_{M^\ast}:M^\ast\stackrel{}\lo  M^{\ast\ast\ast}$ and realize that $(\varphi_M)^\ast\varphi_{M^\ast}=1_{M^\ast}$. That
 is the monomorphism $\varphi_{M^\ast}:M^\ast\stackrel{}\lo M^{\ast\ast\ast}$ splits. We have $$M^{\ast\ast\ast}\simeq M^\ast\oplus \coker(\varphi_{M^\ast})\simeq
M^\ast\oplus\ker((\varphi_M)^\ast)\simeq M^\ast\oplus 0\simeq M^\ast.$$
By \cite[1.1.9(b)]{kris} $M$ is reflexive if it is isomorphic to its bidual.
\end{proof}

\begin{fact} \label{mas}(Bass 1963, Hartshorne  1992, Masek 2000) Let $(R,\fm)$ be a locally  quasi-reduced  ring. Then
$M^{\ell\ast}$ is reflexive  for all $\ell\in \mathbb{N}$. Conversely, if $M^{\ast}$ is reflexive  for all $M$, then $R$ is quasi-reduced.
\end{fact}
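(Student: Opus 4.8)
The plan is to reduce everything to a single statement about the cokernel of the biduality map and to exploit the third--dual splitting established in the previous Proposition. For a finitely generated $M$ write $C_M:=\coker(\varphi_M)$; by the Auslander--Bridger sequence $0\to\Ext^1_R(\D M,R)\to M\to M^{\ast\ast}\to\Ext^2_R(\D M,R)\to 0$ one has $C_M\cong\Ext^2_R(\D M,R)$. Dualizing $M\xrightarrow{\varphi_M}M^{\ast\ast}\to C_M\to 0$ and using that $\varphi_{M^\ast}$ is split injective with $(\varphi_M)^\ast\varphi_{M^\ast}=1_{M^\ast}$ (exactly as in the proof of the preceding Proposition) gives $\coker(\varphi_{M^\ast})\cong (C_M)^\ast$. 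Hence the whole statement rests on the single equivalence: $M^\ast$ is reflexive if and only if $(C_M)^\ast=0$. Since $M^{\ell\ast}=(M^{(\ell-1)\ast})^\ast$ is itself a dual for every $\ell\ge 1$, it suffices to treat $\ell=1$.

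For the forward implication I would show $\Supp(C_M)\cap\Ass(R)=\varnothing$. Local quasi-reducedness forces $(\Se_1)$, so $\Ass(R)=\Min(R)$, and it forces $R_\fp$ to be zero-dimensional Gorenstein, hence self-injective, at every minimal prime $\fp$. Therefore $(C_M)_\fp=\Ext^2_{R_\fp}((\D M)_\fp,R_\fp)=0$ for every $\fp\in\Min(R)=\Ass(R)$, so $\Supp(C_M)$ misses $\Ass(R)$. A standard argument then yields $(C_M)^\ast=0$: a nonzero map $C_M\to R$ would have image whose associated primes lie in $\Ass(R)\cap\Supp(C_M)$. By the equivalence above, $M^\ast$, and hence each $M^{\ell\ast}$, is reflexive.

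For the converse I would localize, since both reflexivity and the formation of $C_M$ commute with localization: if $M^\ast$ is reflexive for all $M$ over $R$, the same holds over every $R_\fp$. At a minimal prime $\fp$ the ring $R_\fp$ is artinian, and testing the hypothesis on the residue field $\kappa(\fp)$ gives $\kappa(\fp)^\ast\cong\Soc(R_\fp)\cong\kappa(\fp)^{t}$ with $t$ the type; a direct double-dual count ($(\kappa^{a})^\ast\cong\kappa^{ta}$) shows this is reflexive only when $t=1$, i.e. $R_\fp$ is Gorenstein, giving generic Gorensteinness. To obtain $(\Se_1)$, suppose $R$ had an embedded prime $\fp$ and pass to $A:=R_\fp$, a local ring of depth $0$ and dimension $\ge 1$. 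Because $\fm_A\in\Ass(A)$, every nonzero finitely generated module maps onto the residue field, which embeds into $\Soc(A)\subseteq A$; hence $(C_M)^\ast=0$ forces $C_M=0$ for all $M$. Choosing $\D M=\Syz_{i-2}(\kappa)$ and shifting dimensions gives $\Ext^i_A(\kappa,A)=0$ for all $i\ge 2$, so $\id_A(A)\le 1<\infty$. By Bass' theorem $A$ is then Cohen--Macaulay with $\dim A=\depth A=0$, contradicting $\dim A\ge 1$.

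The routine ingredients (the transpose sequence, the commutation of $\Hom$ and of reflexivity with localization, and the self-injectivity of artinian Gorenstein rings) I would simply quote. The step I expect to be the real obstacle is the $(\Se_1)$ half of the converse: one must manufacture the right test modules to convert ``all duals are reflexive'' into the vanishing $\Ext^i_A(\kappa,A)=0$ for all $i\ge 2$, after which Bass' finiteness-of-injective-dimension theorem collapses the dimension. The generic-Gorenstein half, by contrast, is only the type-one computation at minimal primes.
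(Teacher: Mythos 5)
Your proof is correct, but it deliberately routes around the two external results the paper leans on. For the forward direction the paper simply observes that $0\to M^{\ast}\to F_0^{\ast}\to F_1^{\ast}\to \D(M)\to 0$ exhibits $M^{\ast}$ as a second syzygy and then quotes Masek's theorem that over a $1$-Gorenstein (i.e.\ quasi-reduced) ring second syzygies are $2$-torsionless, hence reflexive; your identification $\coker(\varphi_{M^{\ast}})\cong \bigl(\Ext^2_R(\D(M),R)\bigr)^{\ast}$ together with the support computation $\Supp\bigl(\Ext^2_R(\D(M),R)\bigr)\cap\Ass(R)=\varnothing$ is in effect a self-contained proof of exactly the instance of Masek's result that is needed, so the two arguments share the transpose machinery but yours unpacks the black box. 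The genuine divergence is in the converse: the paper invokes Bass's Proposition 6.1 wholesale to conclude that the total quotient ring $S^{-1}R$ is Gorenstein and then reads off $(\Se_1)$ and generic Gorensteinness, whereas you reprove the substance of that proposition directly --- the length count $\kappa^{t}\cong\kappa^{t^3}$ at minimal primes forcing type one, and, at a putative embedded prime, the dimension shift $\Ext^2_A(\Syz_{i-2}(\kappa),A)=\Ext^i_A(\kappa,A)=0$ forcing $\id_A(A)\le 1$ (note you do not even need the full Bass conjecture here: $\id_A(A)<\infty$ makes $A$ Gorenstein by definition, hence Cohen--Macaulay, contradicting $\depth A=0<\dim A$). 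What the paper's version buys is brevity; what yours buys is independence from \cite{mas} and \cite{bass} and a single visible mechanism --- the vanishing of $(\coker\varphi_M)^{\ast}$ --- that drives both directions of the equivalence at once.
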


\begin{proof}We may assume that $R$ is local.
	In the sense of Masek, $R$ is 1-Gorenstein. Over a such ring, $2-$torsionless is the same as of $2-$syzygy, see
	\cite[Corollary 45]{mas}.  Let $F_1\to F_0\to M\to 0$ be a minimal presentation of $M$.
	In view of $$0\lo M^{\ast}\lo F_0^{\ast}\lo F_1^{\ast}\lo \D(M)\lo 0 $$ we know $M^\ast$ is second syzygy. Thus, $M^\ast$ is $2-$torsionless, and so reflexive.
From this, $M^{\ell\ast}$ is reflexive  for all $\ell\in \mathbb{N}$.

Conversely, by a result of Bass (see \cite[Proposistion 6.1]{bass}) we know $S^{-1}R$ is a Gorenstein ring where $S$ is the multiplicative closed subset  of all  regular elements.
Note that $S=R\setminus \cup _{\fp\in\Ass(R)}\fp$. Since $S^{-1}R$ is equai-dimensional, $\min(S^{-1}R)=\Ass(S^{-1}R)$.
From this, $\min(R)=\Ass(R)$. This property is just $(\Se_1)$. Let $\fp\in\min(R)$. Since $(S^{-1}R)_{S^{-1}\fp}=R_{\fp}$ we deduce that $R$
is $(G_0)$. Recall that quasi-reduced means $(G_0)$+$(\Se_1)$. This completes the proof.
\end{proof}

\begin{remark} The quasi-reduced assumption is important, see Observation 4.1.  Also,
the finitely generated assumption on $M$ is important even over $\mathbb{Z}$:

 \begin{enumerate}
\item[i)]
There is an abelian group $G$ such that $G^\ast$ is not reflexive, see e.g. \cite[XI, Theorem 1.13]{ek}.
\item[ii)] There is a non-reflexive abelian group $G$ such that $G\simeq G^{\ast\ast}$, see \cite[Page 355]{ek}
(by \cite[Proposition 1.1.9]{kris} such a thing never happens in the setting of noetherian modules).
The origin proof uses topological methods.
\end{enumerate}
\end{remark}
Recall that there are natural maps $M^{m\ast}\to (M^{m\ast} )^{\ast\ast}= M^{(m+2)\ast} $. We set
$$
\begin{CD}
M^{\infty\ast}_{even}:=\vil\big( M@>\varphi_{{M}}>> M^{2\ast} @>\varphi_{{M^{2\ast}}}>>M^{4\ast} @>\varphi_{{M^{4\ast}}}>> M^{6\ast} @>>> \ldots\big)\\
\end{CD} $$ 

 and

$$
\begin{CD}
M^{\infty\ast}_{odd}:=\vil\big( M^{\ast}@>\varphi_{{M^{\ast}}}>> M^{3\ast} @>\varphi_{M^{3\ast}}>>M^{5\ast} @>\varphi_{{M^{5\ast}}}>> M^{7\ast} @>>> \ldots\big).\\
\end{CD} $$

\begin{question}  When is  $M^{\infty\ast}_{odd}\cong M^{\infty\ast}_{even}$? 
\end{question}

The question is not true on this generality:

\begin{example}
	Let $R:=\mathbb{Z}$ and $M:=\bigoplus_{\mathbb{N}}\mathbb{Z}$. Recall that  $M^\ast=\prod_{\mathbb{N}}\mathbb{Z}$ and  $(\prod_{\mathbb{N}}\mathbb{Z})^\ast=\bigoplus_{\mathbb{N}}\mathbb{Z}$. From these, 
	 \begin{enumerate}
		\item $M^{\infty\ast}_{even}=\vil \big( M\to M^{2\ast}\to M^{4\ast}\to M^{6\ast}\to \cdots\big)\cong \bigoplus_{\mathbb{N}}\mathbb{Z}$
		\item $M^{\infty\ast}_{odd}=\vil \big(M^{\ast}\to M^{3\ast}\to M^{5\ast}\to M^{7\ast}\to \cdots\big)\cong\prod_{\mathbb{N}}\mathbb{Z}.$
		 \end{enumerate}
	So, $M^{\infty\ast}_{odd}\ncong M^{\infty\ast}_{even}$. However, both of $M^{\infty\ast}_{odd}$ and $ M^{\infty\ast}_{even}$ are reflexive.
\end{example}

\begin{remark}
Let $R$ be a local domain and $M$ be finitely generated. Then $ M^{\infty\ast}_{odd}\cong M^\ast$ and $ M^{\infty\ast}_{even}\cong M^{\ast\ast}$ are reflexive. In particular, there are situations for which:  \begin{enumerate}
	\item[$\bullet$]$M^{\infty\ast}_{odd}\ncong M^{\infty\ast}_{even}$, and also \item[$\bullet$]$M^{\infty\ast}_{odd}\cong M^{\infty\ast}_{even}$. \end{enumerate}
\end{remark}

\section{Treger's conjecture}
Here, modules are finitely generated.

\begin{example}\label{4.2}Let
$R:=\mathbb{C}[[ x^4,x^3y,x^2y^2,xy^3,y^4]]$. Then  $R$ is a complete normal domain of dimension $2$ and every nonzero
reflexive module  decomposes into a direct sum of rank one submodules.
\end{example}

\begin{proof}
Let $A := \mathbb{C}[[x, y]]$ and recall that $R$ is the $4$-Veronese subring  of $A$.  Recall that $R$ can be regarded as an invariant ring
 of cyclic group $G:=(g)$ of order $4$. Let $A_i$ be the invariant ring by $g^i$. We left to the reader to check
 that each $A_i$ is reflexive as an  $R$-module and of rank one. This is well-known that
$A=A_0\oplus\ldots\oplus A_{3}$. Let $M$ be any indecomposable reflexive $R$-module. By \cite[Proposition 6.2]{lw},
any  indecomposable reflexive $R$-module is a direct summand of $A$ as an $R$-module.
From this, there is an $R$-module $N$ such that $$M\oplus N\cong A=A_0\oplus\ldots\oplus A_{3}.$$ 
Krull-Remak-Schmidt property holds over complete rings.
	It follows   that $M\cong A_{i}$ for some $0\leq i \leq3$. Recall that $A_i$ is  of rank one. 
In particular, any indecomposable reflexive module is of rank one.
\end{proof}
 The following  demonstrates the role of 2-dimensional assumption in  Treger's conjecture.

\begin{observation}
Let $A:=\mathbb{C}[[X_1,\ldots,X_n]]$ and let $(R,\fm)$ be its  $m$-Veronese subring. Any  nonzero reflexive module  decomposes into a direct sum of rank one submodules
if and only if $n\leq2$.
\end{observation}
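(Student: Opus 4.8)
The plan is to prove both implications, using throughout that $R=A^{G}$ where $G=\mathbb{Z}/m\mathbb{Z}$ acts on $A$ by scalar multiplication with a primitive $m$-th root of unity, so that $A=\bigoplus_{i=0}^{m-1}A_{i}$ as $R$-modules with each eigen-summand $A_{i}$ a reflexive $R$-module of rank one (exactly as in Example \ref{4.2}). The only structural fact I need about the decomposition condition is that if a reflexive $M$ splits as $M=\bigoplus_{j}L_{j}$ with each $L_{j}$ of rank one, then each $L_{j}$, being a direct summand of a reflexive module, is itself reflexive; hence each $L_{j}$ is a rank-one reflexive (divisorial) module. Thus the decomposition property is equivalent to: every indecomposable reflexive $R$-module has rank one.

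For the direction $n\leq 2$ I would argue as follows. If $n=1$ then $R\cong\mathbb{C}[[t]]$ is a discrete valuation ring, every finitely generated reflexive module is free, and the claim is immediate. If $n=2$ then $R$ is a two-dimensional cyclic quotient singularity, and I would run the argument of Example \ref{4.2} verbatim for general $m$: by \cite[Proposition 6.2]{lw} every indecomposable reflexive $R$-module is a direct summand of $A$; since $A=A_{0}\oplus\cdots\oplus A_{m-1}$ and $R$ is complete, Krull--Remak--Schmidt forces every indecomposable reflexive to be isomorphic to some $A_{i}$, which has rank one. Hence every nonzero reflexive module decomposes into rank-one submodules.

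For the direction $n\geq 3$ I would prove the contrapositive, so the task reduces, by the reformulation above, to exhibiting a single indecomposable reflexive $R$-module of rank at least two. Here I would pass to the projective picture: $R$ is an $m$-Veronese of the homogeneous coordinate ring of $\PP^{n-1}$, so reflexive $R$-modules correspond to reflexive sheaves on $\PP^{n-1}$ together with their modules of twisted global sections. Choosing an indecomposable vector bundle $\mathcal{E}$ on $\PP^{n-1}$ of rank $n-1\geq 2$---for instance the tangent bundle $T_{\PP^{n-1}}$, which is indecomposable since $n-1\geq 2$---the associated module of sections $M$ over $R$ is reflexive of rank $n-1\geq 2$, and is indecomposable because $\operatorname{End}_{R}(M)$ agrees, up to finite length, with the local ring $\operatorname{End}(\mathcal{E})$. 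An indecomposable module of rank at least two cannot be written as a direct sum of rank-one submodules, so the decomposition property fails. Alternatively, one may simply invoke \cite{lw}, where higher-rank indecomposable reflexive modules over Veronese rings of dimension at least three are produced.

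I expect the only-if direction to be the crux. The subtle points are the reflexivity of $M$ over the non-regular ring $R$ and, more seriously, its indecomposability, both of which rest on the dictionary between reflexive sheaves on $\PP^{n-1}$ and reflexive $R$-modules and on the compatibility of this dictionary with endomorphism rings; the $n\leq 2$ direction, by contrast, is just finite Cohen--Macaulay type for cyclic surface quotient singularities and needs nothing beyond Example \ref{4.2} and \cite{lw}.
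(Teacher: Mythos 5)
Your reduction of the statement to ``every indecomposable reflexive module has rank one'' and your treatment of $n\leq 2$ coincide with the paper's: $n=1$ is the one-dimensional regular case, and $n=2$ is exactly the Krull--Remak--Schmidt argument of Example \ref{4.2} run for a general $m$, via \cite[Proposition 6.2]{lw}. Where you genuinely diverge is the direction $n\geq 3$. The paper splits it in two: for $n=3$, $m=2$ it quotes Auslander--Reiten's computation that $\omega_R$ needs exactly three generators, so $\Syz(\omega_R)$ is an explicit indecomposable reflexive module of rank two; for the remaining cases it argues non-constructively, playing the infinitude of indecomposable maximal Cohen--Macaulay modules (\cite[Theorem 3.1]{ar}) against the finiteness of $\Cl(R)\cong\mathbb{Z}/m\mathbb{Z}$, hence of rank-one reflexives. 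Your tangent-bundle construction is more uniform --- it even covers $m=1$, where the paper's counting argument cannot work (a regular ring has only free MCM modules) and the paper must dismiss that case as trivial --- but it shifts the whole burden onto the sheaf--module dictionary, and there the one step you assert rather than prove is also misstated: $\operatorname{End}_R(M)$ does \emph{not} agree up to finite length with $\operatorname{End}(\mathcal{E})$, since the former has rank $(n-1)^2$ over $R$ while $\operatorname{End}(T_{\PP^{n-1}})=\mathbb{C}$. What is true, and what you need, is that $\operatorname{End}_R(M)\cong\bigoplus_{d}\HH^0\bigl(\PP^{n-1},(T\otimes\Omega)(md)\bigr)$ is non-negatively graded with degree-zero part $\mathbb{C}$ (simplicity of the tangent bundle together with the vanishing of $\HH^0((T\otimes\Omega)(d))$ for $d<0$), which excludes nontrivial idempotents and survives completion; similarly, reflexivity of $M$ requires checking $\depth M\geq 2$ at the vertex in addition to local freeness off it. With those verifications supplied your route closes and is arguably more explicit than the paper's counting argument; without them, the indecomposability step is the one genuine gap.
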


\begin{proof}
In the case $n=1$, the ring $R$ is regular and is of dimension one. Remark \ref{4.1} shows that reflexive modules are free.
Also, the case $m=1$ is trivial. The case $n=2$ is a modification of Example \ref{4.2} (it may be worth to note that there is a geometric proof  in \cite[Lemma 1]{ile}). Next we deal with
 $n=3$ and $m=2$.  This is stated in \cite[Theorem 4.1]{ar} that
 the canonical module of $R$ is generated by $3$ elements and not less. Recall that
 $\omega_R$ is of rank $1$. However, if we pass to its first syzygy we get a rank $2$
 indecomposable maximal Cohen-Macaulay module $\Syz(\omega_R)$. In particular,
$\Syz(\omega_R)$ is a reflexive module and of rank two. Since $\Syz(\omega_R)$ indecomposable, it does not decomposable into a direct sum of rank one submodules.
Finally we  assume either $n>3$ or ($n=3\leq m)$. Again, we use a result of Auslander and Reiten (\cite[Theorem 3.1]{ar}) to see  there are infinity many
indecomposable maximal Cohen-Macaulay modules. Hence, there are infinity many
indecomposable reflexive modules. The classical group of $R$
is $\mathbb{Z}/m\mathbb{Z}$. By definition,
there are finitely many rank one reflexive modules. Now, we find  an indecomposable reflexive module $M$
of rank bigger than one. In particular, $M$ is not direct sum of its rank one submodules.
\end{proof}

\begin{proposition}\label{plane}Let $(R,\fm)$ be a singular standard graded normal  hypersurface ring of dimension $2$ where $k$
is algebraically closed  with $\Char k\neq2$.  Then $R=\frac{k[x,y,z]}{(x^2+y^2+z^2)}$ (after a suitable linear change of variables) if and only if every nonzero
graded reflexive module  decomposes  into a direct sum of rank one submodules.
\end{proposition}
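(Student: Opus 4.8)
The plan is to reduce everything to vector bundles on the curve $C:=\Proj R$ and then to separate the two cases by the genus of $C$. First I would record the standard reductions: being a standard graded hypersurface of dimension two, $R=\frac{k[x,y,z]}{(f)}$ with $f$ homogeneous of degree $d$, and the word \emph{singular} forces $f\in\fm^2$, i.e. $d\geq2$; normality is equivalent, by Serre's criterion together with the Cohen--Macaulayness of a hypersurface, to $(R_1)$, i.e. to the smoothness of the plane curve $C=\Proj R$ of degree $d$. Over the two-dimensional normal ring $R$ a finitely generated graded module is reflexive if and only if it is maximal Cohen--Macaulay, and for such a module $M$ one has $\depth M=2$, so $\HH^0_{\fm}(M)=\HH^1_{\fm}(M)=0$ and hence $M\cong\Gamma_\ast(\widetilde M):=\bigoplus_{n\in\BZ}\HH^0(C,\widetilde M(n))$, where $\widetilde M$ is the (locally free, since $C$ is smooth) sheafification of $M$. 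Sheafification preserves rank and carries a graded direct sum to a direct sum of sheaves, and since a reflexive module has no nonzero finite-length summand, $M=M_1\oplus M_2$ is a nontrivial splitting exactly when $\widetilde M=\widetilde{M_1}\oplus\widetilde{M_2}$ is. Thus the splitting hypothesis is equivalent to: every bundle of the form $\widetilde M$ on $C$ is a direct sum of line bundles; in particular every indecomposable reflexive module then has rank one.

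For the implication that the splitting property forces $R$ to be the quadric cone, I would argue by contraposition on $d$. If $d\geq3$ the genus of the smooth plane curve $C$ is $g=\binom{d-1}{2}\geq1$, so $C$ carries an indecomposable vector bundle $\mathcal E$ of rank two (an Atiyah bundle when $g=1$, a stable rank-two bundle when $g\geq2$). Then $M:=\Gamma_\ast(\mathcal E)$ is a finitely generated reflexive $R$-module with $\widetilde M\cong\mathcal E$, so by the dictionary above $M$ is indecomposable of rank two and cannot be a direct sum of rank one submodules, contradicting the hypothesis. Hence $d=2$, so $f$ is a ternary quadratic form; smoothness of the conic $C$ is nondegeneracy of $f$, and since $k=\overline k$ and $\Char k\neq2$ every nondegenerate ternary quadratic form is equivalent, after a linear change of variables, to $x^2+y^2+z^2$. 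Therefore $R\cong\frac{k[x,y,z]}{(x^2+y^2+z^2)}$.

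Conversely, if $R=\frac{k[x,y,z]}{(x^2+y^2+z^2)}$ then $C=\Proj R$ is a smooth conic, hence $C\cong\PP^1_k$. By Grothendieck's splitting theorem every vector bundle on $\PP^1$ is a direct sum of line bundles, so for any graded reflexive $M$ we get $\widetilde M\cong\bigoplus_i\mathcal O_{\PP^1}(a_i)$ and therefore $M\cong\Gamma_\ast(\widetilde M)\cong\bigoplus_i\Gamma_\ast(\mathcal O_{\PP^1}(a_i))$, a direct sum of rank one reflexive submodules. (Equivalently one may identify $R$ with the $2$-Veronese subring $k[s^2,st,t^2]$ and run the argument of Example \ref{4.2} with $A=k[s,t]=A_0\oplus A_1$.) This proves both implications.

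The step I expect to be the main obstacle is the case $d\geq3$: producing an indecomposable reflexive module of rank $\geq2$ and certifying its indecomposability. The clean route is the bundle-theoretic one above, but it rests on two inputs that must be stated with care, namely the existence of indecomposable rank-two bundles on a smooth curve of positive genus and the exactness of the correspondence $M\leftrightarrow\widetilde M$ on direct summands (together with the finite generation and reflexivity of $\Gamma_\ast(\mathcal E)$ and the identification $\widetilde{\Gamma_\ast(\mathcal E)}\cong\mathcal E$). Once the reflexive $=$ maximal Cohen--Macaulay $=$ bundle-on-$C$ dictionary is set up, both implications are short.
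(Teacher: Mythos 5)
Your proposal is correct, and its overall skeleton coincides with the paper's: both proofs set up the dictionary between graded reflexive (equivalently, maximal Cohen--Macaulay) modules over the two-dimensional normal cone $R$ and vector bundles on the smooth plane curve $\mathcal{C}=\Proj(R)$, and both finish the identification of the quadric with the genus formula $g=\frac{(d-1)(d-2)}{2}$ and the classification of nondegenerate ternary quadratic forms when $k=\overline{k}$, $\Char k\neq 2$. The difference lies in the two key inputs. For the forward implication the paper quotes Ballico's theorem that a projective curve on which every vector bundle is a sum of line bundles must be isomorphic to $\PP^1$, and only then computes the genus; you instead argue by contraposition and produce the obstruction by hand, exhibiting an indecomposable rank-two bundle on any smooth curve of genus $\geq 1$ (Atiyah bundle for $g=1$, a stable bundle for $g\geq 2$) and transporting it to an indecomposable rank-two graded reflexive module via $\Gamma_\ast$. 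This buys independence from Ballico's result at the cost of invoking Atiyah's classification and the nonemptiness of moduli of stable bundles; it also requires you to verify (as you do) that $\Gamma_\ast(\mathcal{E})$ is finitely generated, reflexive, and indecomposable, and that $\widetilde{\Gamma_\ast(\mathcal{E})}\cong\mathcal{E}$. For the converse the paper appeals to the representation theory of the $A_1$ singularity (exactly two indecomposable maximal Cohen--Macaulay modules, both of rank one), whereas you use Grothendieck's splitting theorem on $\mathcal{C}\cong\PP^1$ and pull the decomposition back through $\Gamma_\ast$; the two are equally short, and your route has the mild virtue of staying entirely inside the sheaf-theoretic dictionary already established. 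One small point both treatments leave implicit is that the hypothesis speaks of decompositions into (not necessarily graded) submodules, so one tacitly uses that an ungraded decomposition of a finitely generated graded module over a standard graded $k$-algebra can be taken graded; this does not affect the validity of either argument.
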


\begin{proof} Suppose every
graded reflexive module is decomposable into a direct sum of rank one submodules. Set $\mathcal{C}:=\Proj(R)$. In view of  Serre's criterion of normality (\cite[Page 185]{har}),
$\mathcal{C}$ is a smooth projective  plane curve.  According to \cite[Proposition I.7.6]{har},
 $\mathcal{C}$ is of degree equal to $d:=\deg(f)$. Also, $\mathcal{C}$ is reduced, irreducible and connected.
Recall that there is no difference between reflexive modules
and maximal Cohen-Macaulay  modules. Maximal Cohen-Macaulay are locally free over punctured spectrum and their are of constant rank, since $R$ is domain and is regular over punctured spectrum. In this regards, graded reflexive modules correspondence to locally free sheaves. Let us explain a little more. Namely,
an $\mathcal{O}_\mathcal{C}$-module $\mathcal{F}$ is called \textit{free} if it is isomorphic to a direct sum of copies of
$\mathcal{O}_\mathcal{C}$. It is called \textit{locally free} if $\mathcal{C}$ can be covered by open (affine) sets $U$ for which $\mathcal{F}|_U$
is a free $\mathcal{O}_U$-module. In particular, $\mathcal{F}$ is quasi-coherent. We look at the reflexive module $$M:=\bigoplus_{n\in \mathbb{Z}}\HH^0(\mathcal{C},\mathcal{F}\otimes\mathcal{O}_\mathcal{C}(n)),$$see \cite[6.49. Exercise]{lw}. According to \cite[Proposition II.5.15]{har}
$\mathcal{F}\simeq\widetilde{M}$. Here, we use the fact that  $R$ is  finitely generated by $R_1$ as
an $k$-algebra. Recall from \cite[Ex. II.5.18(d)]{har} that there is a one-to-one correspondence
between isomorphism classes of locally free sheaves   and isomorphism
classes of vector bundles.
In sum, we observed that any vector bundle on $\mathcal{C}$ is
isomorphic to a direct sum of line bundles. Again, we are going to use the fact that $k=\overline{k}$: Thanks to \cite[Theorem 1.1]{bal}, $\mathcal{C}$ is isomorphic
to $\PP^1$. In the light of \cite[V.5.6.1]{har} we know
 genus is a birational invariant. 
Since  $\mathcal{C}$ is smooth,  $$g_{\mathcal{C}}=\frac{(d-1)(d-2)}{2}$$ (see \cite[Ex. I.7.2]{har}). This is zero, because $g_{\PP^1}=0$. This implies that $d\leq2$.
It $d=1$ this implies that $R$ is nonsingular which is excluded by the assumption. Then we may assume that $d=2$. Since $\Char k\neq2$ and in view of \cite[Ex. I.5.2]{har} and after a suitable linear change of variables,  $\mathcal{C}$ is defined by $x^2+y^2+z^2=0$.

Conversely, assume $R=\frac{k[x,y,z]}{(x^2+y^2+z^2)}$. In the sense of representation theory, $R$ has singularity of type $A_1$. In particular,  there are only two indecomposable maximal Cohen-Macaulay modules. Both of them are of rank one. The proof is now complete.
\end{proof}

The standard-graded assumption is really important:

\begin{example} i) Any  nonzero reflexive module over $R_n:=\frac{\mathbb{C}[x,y,z]}{(x^n+y^2+z^2)}$  with  $n\geq2$ decomposable into a direct sum of rank one reflexive submodules (see \cite[Example 5.25]{lw}).
In fact any  nonzero indecomposable   reflexive $R_n$-module is of rank $1$. The same thing works for
$\frac{\mathbb{C}[[x,y,z]]}{(x^n+y^2+z^2)}$.

ii) Look at $\mathcal{R}=\frac{k[x,y,z]}{(x^2+y^2+z^2)}$ equipped with graded ring structure given by $G:=\mathbb{Z}/2\mathbb{Z}\oplus\mathbb{Z}/2\mathbb{Z}\oplus\mathbb{Z}/2\mathbb{Z}$. Auslander-Reiten presented an indecomposable $G$-graded reflexive module $M$. For more details, see \cite[Page 191]{AR}.
 Also, they remarked that $\widehat{M}$ decomposes into a direct sum of two rank one reflexive submodules.
\end{example}

\begin{remark}Treger remarked that there are indecomposable reflexive modules over $R=\frac{k[[x,y,z]]}{(x^3+y^5+z^2)}$ of rank bigger than one, see \cite[Remark 3.12]{t} (here we adopt
some restriction on the characteristic). Let us determine the rank of such modules.
\begin{enumerate}
\item[i)]Let $(R,\fm)$ be a complete $2$-dimensional $\UFD$ which is not regular and containing a field. Then any indecomposable reflexive module is of rank $1\leq i\leq 6$.

\item[ii)] Let $(R,\fm)$ be a complete $2$-dimensional ring of Kleinian singularities. Suppose any reflexive module over $R$  decomposes into a direct sum of rank one reflexive submodules.
Then $R\simeq \frac{k[[x,y,z]]}{(x^n+y^2+z^2)}$ for some $n>1$.

\item[iii)] If we allow the ring is not local, then we can construct a  $2$-dimensional $\UFD$
with a projective module which is not free.
\end{enumerate}
\end{remark}

\begin{proof} Since $R$ is $\UFD$, its classical group is trivial. This implies that any rank one reflexive module is free. According to Remark  \ref{4.1},
there is a reflexive module which is not free. By this, there is an indecomposable reflexive module of rank bigger than one.

i)   Lipman proved over any
algebraically closed field $k$ of characteristic $>5$ the only non-regular normal
complete $2$-dimensional local ring which is a $\UFD$ is $R=\frac{k[[x,y,z]]}{(x^3+y^5+z^2)}$ (see e.g. \cite[Ex. V.5.8]{har}).
  According to \cite{dan}
 there is  a complete list of representatives of the isomorphism classes of indecomposable  maximal Cohen-Macaulay modules.
The ranks are  $[1,6]$.

ii) The proof of this is similar  to i) and we leave it to reader.

iii) This item was proved by many authors. For example, the tangent bundle of a 2-dimensional sphere is not trivial, see Swan's monograph \cite{s}.
\end{proof}

\begin{remark}
Let  $R=\frac{k[x_0,\ldots,x_3]}{I}$   be a singular standard graded normal  ring of dimension $2$ where $k=\overline{k}$
with $\Char k\neq2$. Suppose $\deg(R)\leq 7$. Then $I$ can be generated by degree-two elements  if every nonzero
graded reflexive module  decomposes into a direct sum of rank one submodules.
\end{remark}

\begin{proof} Let $\mathcal{C}:=\Proj(R)$. We may assume that $\mathcal{C}$ is genus is zero, see Proposition \ref{plane}. The case $\deg(R)\leq 4$ follows from the following fact:
 \begin{enumerate}
\item[Fact] (Treger-Nagel) Let $X\subset\PP ^n$ be an arithmetically Cohen-Macaulay variety of degree $d$ and codimension $c>1$. Then the defining ideal  is generated by forms
of degree $\lceil \frac{d}{c}\rceil$.
\end{enumerate}For the simplicity we recall the following classification result: Let $X\subset\PP ^n$
 be an irreducible and reduced normal projective subvariety of
dimension one and degree five  which is not
contained in any hyperplane of $\PP ^n$. Then $X$ is one of the following four types:
i) a  curve of genus $6$ in $\PP ^2$,
ii) a curve of genus $2$ in $\PP ^3$,
 iii)  an elliptic curve of degree $5$ in  $\PP ^4$, or
 iv)  a rational normal curve in $\PP ^5$.
These are not isomorphic with $\mathcal{C}$  (we use the presentation of \cite[page 4324]{tir}).
The only projectively normal curve of degree  $6$ in $\PP ^3$  not contained in any plane
are of genus $3$ or $4$ (see \cite[Ex.V.6.6]{har}). The same citation  shows that the only projectively normal curve of degree  $7$  in $\PP ^3$  not contained in any plane
are of genus  $ 5$ or $6$. The proof is now complete.
\end{proof}

\section{Remarks on  a question by Braun}

 \begin{question} (Braun, \cite[Question 16]{braun}) Let $(R,\fm)$ be a  normal  domain and $I\lhd R$ a reflexive ideal with $\id_R( I) <
\infty$. Is $I$ isomorphic to a canonical module?
\end{question}

By \cite[Page 682]{braun}, the only positive evidence we have is when $R$ is also Gorenstein.

 \begin{proposition} \label{7.2} Let $(R,\fm)$ be an analytically normal  domain  of dimension $2$ and $I\lhd R$ be reflexive with $\id_R( I) <
\infty$. Then $I$ isomorphic to a canonical module.
\end{proposition}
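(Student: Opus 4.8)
The plan is to identify $I$ as a maximal Cohen--Macaulay module of finite injective dimension and then invoke the structure theory of such modules, cutting the resulting direct sum down to a single copy by a rank count.

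First I would fix the ambient geometry. Since $R$ is normal it satisfies $(\Se_2)$, and as $\dim R=2$ this gives $\depth R=2$, so $R$ is Cohen--Macaulay; because $R$ is analytically normal, its completion $\hat R$ is again a two-dimensional normal Cohen--Macaulay local domain, so a canonical module is available, with $\omega_{\hat R}=\widehat{\omega_R}$. As $I$ is a reflexive module over the normal domain $R$ it satisfies $(\Se_2)$, and since it is a nonzero ideal of a domain it has full support and is therefore maximal Cohen--Macaulay (depth $2=\dim R$). Reflexivity, the depth, and finite injective dimension are all preserved under the faithfully flat completion map, so I would carry out the main computation over $\hat R$ and descend the final isomorphism at the end.

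The core step uses the canonical duality $(-)^{\dagger}:=\Hom_{R}(-,\omega_R)$. For the maximal Cohen--Macaulay module $I$ one has $I^{\dagger}$ maximal Cohen--Macaulay and $I^{\dagger\dagger}\cong I$. The input I would lean on is the interchange, valid on maximal Cohen--Macaulay modules over a Cohen--Macaulay ring with a canonical module, between finite injective dimension and finite projective dimension under $\omega$-duality (equivalently, Sharp's theorem that a maximal Cohen--Macaulay module of finite injective dimension is a direct sum of copies of $\omega_R$). Since $\id_R I<\infty$, this yields $\pd_R I^{\dagger}<\infty$; but $I^{\dagger}$ is maximal Cohen--Macaulay, so Auslander--Buchsbaum gives $\pd_R I^{\dagger}=\depth R-\depth I^{\dagger}=0$, i.e. $I^{\dagger}$ is free, say $I^{\dagger}\cong R^{n}$. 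Dualizing back, $I\cong I^{\dagger\dagger}\cong(R^{n})^{\dagger}\cong\omega_R^{\,n}$.

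Finally I would pin down $n$ by rank: $I$ is a nonzero ideal of the domain $R$, hence of rank one, while $\omega_R$ has rank one since it localizes to the canonical module of the field $Q(R)$ at the generic point. Thus $1=\rank I=n\cdot\rank\omega_R=n$, forcing $n=1$ and $I\cong\omega_R$. The step I expect to be the main obstacle is securing the two structural inputs over the given (not necessarily complete) ring, namely the existence and good behaviour of $\omega_R$ and the $\id$/$\pd$ interchange under $\omega$-duality; this is precisely where analytic normality is used, allowing me to run the duality over the complete normal domain $\hat R$, conclude $\hat I\cong\omega_{\hat R}=\widehat{\omega_R}$, and then descend to $I\cong\omega_R$ via the fact that finitely generated modules over a Noetherian local ring that become isomorphic after completion are already isomorphic.
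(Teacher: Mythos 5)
Your proof is correct and follows essentially the same route as the paper: establish that $R$ is Cohen--Macaulay and $I$ is maximal Cohen--Macaulay, pass to the completion, and invoke the structure theorem for maximal Cohen--Macaulay modules of finite injective dimension (which you derive via $\omega$-duality and Auslander--Buchsbaum, and which the paper cites directly from Leuschke--Wiegand) to get $I\cong\omega_R^{\,n}$. The only cosmetic difference is the last step: you force $n=1$ by a rank count, whereas the paper observes that a nonzero ideal of a domain cannot split as a direct sum of two nonzero ideals since their product would vanish; both are fine.
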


 \begin{notation}For an $R$-module $M$, the $i^{th}$ local cohomology of $M$ with
	respect to an ideal $\fa$ is defined by
	$\HH^{i}_{\fa}(M):={\varinjlim}_n\Ext^{i}_{R} (R/\fa^{n},
	M)$.\end{notation}
Let us present the proof of proposition:
\begin{proof}
Due to the Serre's criterion of normality, the ring is $(\Se_2)$. Since $\dim R=2$, we see that $R$ is Cohen-Macaulay.
  In the light of Fact A) in Observation \ref{m3COR},
$\depth I\geq2$. Since $\dim R=2$ we deduce that $I$ is maximal Cohen-Macaulay.
 We may assume that the ring is complete:  Let $$K_R:=\Hom_R(\HH^2_{\fm}(R),E(R/\fm)).$$ Thanks to   \cite[Definition 5.6]{kunz}, $K_R\otimes \widehat{R}\simeq K_{\widehat{R}}$.
Also, recall that if $M \otimes \widehat{R}\simeq N \otimes \widehat{R}$, then $M$ is also isomorphic to $N$ (see \cite[Lemma 5.8]{kunz}).
Since $R$ is complete and Cohen-Macaulay, $K_R=\omega_R$, in the sense that it is maximal
Cohen-Macaulay module  of type one and of finite injective dimension. For the simplicity, we bring the following from  \cite[Proposition 11.7]{lw}:
 \begin{enumerate}
\item[Fact A)] Let $A$ be a Cohen-Macaulay local ring with canonical module
$\omega_A$. If a module $M$ is both Maximal Cohen-Macaulay and
of finite injective dimension, then $M\simeq \oplus_n\omega_A$ for some $n$.\end{enumerate}
We apply the above fact to see $I\simeq\oplus_n\omega_R$ for some $n$. Since the ring is domain,
$I_1 I_2\neq0$ where   $0\neq I_i\subset I$ are ideals of $R$. Thus, $n=1$ and that $I\simeq\omega_R$.
\end{proof}

 \begin{proposition}  Let $(R,\fm)$ be an analytically normal  domain  and $I\lhd R$ be totally reflexive with $\id_R( I) <
\infty$. Then $I$ isomorphic to a canonical module.
\end{proposition}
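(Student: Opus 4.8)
The plan is to follow the scheme of Proposition \ref{7.2}, using the totally reflexive hypothesis to replace the two-dimensional hypothesis that was used there to force maximal Cohen-Macaulayness. First I would show that $R$ is Cohen-Macaulay. In Proposition \ref{7.2} this came for free from normality together with $\dim R=2$ (normal gives $(\Se_2)$, and $(\Se_2)$ is Cohen-Macaulay in dimension two); in arbitrary dimension normality yields only $(\Se_2)$, so that route is unavailable. Instead I would invoke the (now settled) Bass conjecture: since $0\neq I$ has finite injective dimension, $R$ is Cohen-Macaulay.

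Next, since $I$ is totally reflexive we have $\Gdim_R I=0$, so the Auslander-Bridger formula gives $\depth_R I=\depth R=\dim R$; hence $I$ is maximal Cohen-Macaulay. Thus $I$ is a maximal Cohen-Macaulay module of finite injective dimension, which is exactly the situation in which Fact A) in the proof of Proposition \ref{7.2} applies.

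As in Proposition \ref{7.2}, I would then reduce to the complete case. Writing $K_R:=\Hom_R(\HH^d_{\fm}(R),E(R/\fm))$ with $d:=\dim R$, one has $K_R\otimes\widehat{R}\simeq K_{\widehat{R}}$, and by \cite[Lemma 5.8]{kunz} it suffices to prove $\widehat{I}:=I\otimes\widehat{R}\simeq K_{\widehat{R}}$. Completion preserves depth, dimension and finiteness of injective dimension, so $\widehat{I}$ is a maximal Cohen-Macaulay $\widehat{R}$-module of finite injective dimension over the complete Cohen-Macaulay local ring $\widehat{R}$, whose canonical module is $\omega_{\widehat{R}}=K_{\widehat{R}}$. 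Fact A) in the proof of Proposition \ref{7.2} then gives $\widehat{I}\simeq\omega_{\widehat{R}}^{\oplus n}$ for some $n$.

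Finally I would pin down $n=1$ exactly as in Proposition \ref{7.2}. Since $R$ is analytically normal, $\widehat{R}$ is a normal domain and $\widehat{I}$ is a nonzero ideal of $\widehat{R}$; any splitting $\widehat{I}\simeq A\oplus B$ with $A,B\neq 0$ would exhibit $A,B$ as nonzero ideals with $AB\subseteq A\cap B=0$, contradicting that $\widehat{R}$ is a domain (equivalently, $\widehat{I}$ and $\omega_{\widehat{R}}$ both have rank one). Hence $n=1$, so $\widehat{I}\simeq K_{\widehat{R}}$ and therefore $I\simeq K_R$ by faithfully flat descent. I expect the only genuinely new point, compared with Proposition \ref{7.2}, to be the first step: in higher dimension normality no longer delivers Cohen-Macaulayness, and one must appeal to the resolution of the Bass conjecture; the remaining steps are a faithful adaptation.
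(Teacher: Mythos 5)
Your proposal is correct and follows essentially the same route as the paper: Bass' conjecture (now a theorem) gives Cohen--Macaulayness, the Auslander--Bridger formula upgrades total reflexivity to maximal Cohen--Macaulayness, the completion reduction and Fact A) from Proposition \ref{7.2} yield $I\simeq\oplus_n\omega_R$, and the domain hypothesis forces $n=1$. No substantive differences.
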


\begin{proof}Due to Bass' conjecture (which is a theorem), $R$ is Cohen-Macaulay because there is a finitely generated module of finite injective dimension.
Due to the above argument, we may assume  $R$ is  complete. By definition, totally reflexive are of zero $G$-dimension.
According to  Auslander-Bridger formula, $$\depth(I)=\Gdim(I)+\depth(I)=\depth(R)=\dim(R).$$ In other words,  $I$ is maximal Cohen-Macaulay.
We apply  Fact A) in Proposition \ref{7.2} to see $I\simeq\oplus_n\omega_R$ for some $n$. Since the ring is domain, $I\simeq\omega_R$.
\end{proof}

\begin{remark}
	We recently find an affirmative answer to the Bruan's question in the full setting, see \cite{ufd}.
\end{remark}

\section{Descent from (to)  the endomorphism ring and tensor products}

A module $M$ satisfied $(\Se_n)$ if
$M_{\fp}$ is either maximal Cohen-Macaulay or $0$ when $\Ht(\fp)\leq n$, and
$\depth(M_{\fp})\geq n$ if  $\Ht(\fp)>n$ with the convenience that $\depth(0)=\infty$.
In \cite[Proposition 4.6]{ag}, there is a criterion of reflexivity over normal local domains. Here, we prove this  for Gorenstein rings:

\begin{proposition}\label{decref}
Let $(R,\fm)$ be a Gorenstein ring. The following holds:
\begin{enumerate}
\item[i)] Assume   $\Ext^1_{R}(M,M)=0$ and  $\Hom_{R}(M,M)$ is reflexive. Then  $M$  is  reflexive.
\item[ii)] If $M$ is  reflexive, then $\Hom_{R}(M,M)$  is reflexive.
\end{enumerate}
\end{proposition}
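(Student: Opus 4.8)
The plan is to reduce both parts to the Serre condition $(\Se_2)$. Over a Gorenstein ring $R$ a finitely generated module $N$ is reflexive if and only if it is a second syzygy, equivalently $N$ satisfies $(\Se_2)$, i.e. $\depth N_{\fp}\ges\min\{2,\dim R_{\fp}\}$ for every $\fp\in\Spec(R)$ (the $1$-Gorenstein instance of Masek's comparison of $2$-torsionless and $2$-syzygy modules recalled in Fact \ref{mas}). Thus for ii) I only need to check that $\Hom_R(M,M)$ satisfies $(\Se_2)$, while for i) I must upgrade the $(\Se_2)$-information carried by $\Hom_R(M,M)$ to the reflexivity of $M$ itself.

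For ii) the key is the depth estimate $\depth\Hom_R(M,N)_{\fp}\ges\min\{2,\depth N_{\fp}\}$, valid for any nonzero finitely generated $M$ whenever $\depth N_{\fp}\ges1$. I would prove it by choosing a presentation $R^{a}\lo R^{b}\lo M\lo0$, applying $\Hom_R(-,N)$ to realize $\Hom_R(M,N)$ as the kernel of a map $N^{b}\lo N^{a}$, and then running the depth lemma twice: first on the cokernel of $\Hom_R(M,N)\hookrightarrow N^{b}$, which is a submodule of $N^{a}$, and then on $\Hom_R(M,N)$ itself. Taking $N=M$ and using that reflexivity of $M$ means $M$ is $(\Se_2)$ gives $\depth\Hom_R(M,M)_{\fp}\ges\min\{2,\depth M_{\fp}\}\ges\min\{2,\dim R_{\fp}\}$, so $\Hom_R(M,M)$ is $(\Se_2)$, hence reflexive.

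For i) I would argue through the biduality sequence. First, $\Hom_R(M,M)$ reflexive is in particular $(\Se_1)$; a torsion element of $M$ at a height-one prime $\fp$ would produce, via $M_{\fp}\twoheadrightarrow k(\fp)\hookrightarrow M_{\fp}$, a nonzero finite-length endomorphism of $M_{\fp}$, forcing $\depth\Hom_R(M,M)_{\fp}=0$, a contradiction. Hence $M$ is torsion-free and already reflexive at every prime of height $\les1$, so the natural map $\varphi_M$ is injective with cokernel $C$ supported in codimension $\ges2$, giving $0\lo M\lo M^{\ast\ast}\lo C\lo0$. Applying $\Hom_R(M,-)$ and using $\Ext^1_R(M,M)=0$ yields $\Hom_R(M,C)\cong\coker\bigl(\Hom_R(M,M)\lo\Hom_R(M,M^{\ast\ast})\bigr)$. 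Now $\Hom_R(M,M)$ (by hypothesis) and $\Hom_R(M,M^{\ast\ast})$ (reflexive by the estimate of ii) applied with $N=M^{\ast\ast}$) are both reflexive, and the displayed map is an isomorphism at all primes of height $\les1$ because $\varphi_M$ is; since a map of reflexive, hence $(\Se_2)$, modules that is an isomorphism in codimension $\les1$ is an isomorphism, we get $\Hom_R(M,C)=0$. Finally, if $C\neq0$ pick $\fq\in\Ass(C)$; then $k(\fq)\hookrightarrow C_{\fq}$ together with $M_{\fq}\twoheadrightarrow k(\fq)$ gives a nonzero map $M_{\fq}\lo C_{\fq}$, contradicting $\Hom_R(M,C)=0$. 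Thus $C=0$ and $M$ is reflexive.

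I expect the main obstacle to be this last direction: the vanishing of $C$ is exactly the point where the two hypotheses must be combined. The reflexivity of $\Hom_R(M,M)$ trivializes the codimension-one behaviour and identifies $\Hom_R(M,M)$ with $\Hom_R(M,M^{\ast\ast})$, while the vanishing $\Ext^1_R(M,M)=0$ converts this identification into $\Hom_R(M,C)=0$; making these two inputs interact, and then reading off a contradiction at an associated prime of $C$, is the crux of the argument.
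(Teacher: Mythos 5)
Your proof is correct and is essentially the paper's: part ii) is the same depth-of-Hom/$(\Se_2)$ argument (the paper cites Auslander--Goldman for the estimate $\depth\Hom_R(M,N)\ges\min\{2,\depth N\}$ that you prove by hand), and part i) uses the same ingredients --- $\Ass(\Hom_R(M,M))=\Supp(M)\cap\Ass(M)$ to get torsion-lessness of $M$, the sequence $0\to M\to M^{\ast\ast}\to C\to 0$ combined with $\Ext^1_R(M,M)=0$ to present $\Hom_R(M,C)$ as the cokernel of a map of modules of depth at least two, and the depth lemma plus $\Ass(\Hom_R(M,C))=\Ass(C)$ to force $C=0$. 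The only difference is organizational: the paper inducts on $\dim R$ so that its cokernel has finite length and a single depth-lemma application at $\fm$ finishes, whereas you establish reflexivity in codimension $\les 1$ directly and package the same localize-and-apply-the-depth-lemma step as the principle that a map of $(\Se_2)$-modules which is an isomorphism in codimension one is an isomorphism.
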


\begin{proof}One can assume
that the ring is  local and $M$ is nonzero.

i):
We argue by induction on $d:=\dim R$. When $d=0$,  any module is totally reflexive. Suppose, inductively, that $d > 0$ and that the result has been proved
for  Gorenstein rings of smaller  dimensions. By the inductive assumption, $M$ is reflexive over the punctured spectrum.
Since $\Hom_{R}(M,M)$ is torsion-less, $$\fm\notin\Ass(\Hom_{R}(M,M))=\Supp(M)\cap \Ass(M)=\Ass(M).$$
If $d=1$, this means that $M$ is maximal Cohen-Macaulay. Since
these modules over Gorenstein rings are reflexive, we can assume that $d>1$. It follows
from definition that $M$ is $(\Se_1)$, i.e., $M$ is torsion-less. We conclude that there is an exact sequence $0\to M\to M^{\ast\ast}\to L\to 0$.
Since $M$ is reflexive over the punctured spectrum, $L$ is of finite length. In particular, $$\Ass(L)\subset\Supp(L)\subset \{\fm\}\subseteq\Supp(M).$$
We are going to show that $L=0$. Keep in mind that a module $(-)$ is zero if $\Ass(-)=\emptyset$. Since $$\Ass(\Hom(M,L))=\Supp(M)\cap\Ass(L)=\Ass(L),$$ we see  $L=0$ if and only if $\Hom(M,L)=0$.
There is an exact sequence $$0\to \Hom_{R}(M,M)\to\Hom_{R}(M, M^{\ast\ast})\to \Hom(M,L)\to \Ext^1_{R}(M,M)=0.$$
Since $\Hom_{R}(M,M)$ is reflexive, $\depth(\Hom_{R}(M,M))\geq2$. In the same vein, $\depth(M^{\ast\ast})\geq2$. In view of \cite[Proposition 4.7]{ag} we observe that
$$\depth(\Hom_{R}(M, M^{\ast\ast}))\geq2.$$ Suppose on the way of contradiction that $\Hom(M,L)\neq 0$. Since $\Hom(M,L)$ is artinian, $\depth(\Hom(M,L))=0$. We use the \textit{depth lemma} to see that $$\depth(\Hom(M,L))\geq\big\{\depth(\Hom_{R}(M,M))-1,\depth(\Hom_{R}(M, M^{\ast\ast}))\big\}\geq 1.$$
This  contradiction shows that $M$ is  reflexive.

 ii):  Recall that $M$ is $(\Se_2)$. Let $\fp\in\Supp(\Hom_{R}(M,M))$ of height at most two. Then $\fp\in\Supp(M)$.
It follows from $(\Se_2)$ that $M_{\fp}$ is maximal Cohen-Macaulay. Due to \cite[Proposition 4.7]{ag},
$$\depth(\Hom_{R_{\fp}}(M_{\fp},M_{\fp}))\geq\Ht(\fp).$$ This means that $ \Hom_{R}(M,M)_{\fp}$ is  maximal Cohen-Macaulay. Let $\fp$ be of height greater than two.
It follows from $(\Se_2)$ that $\depth(M_{\fp})\geq 2$. Another use of \cite[Proposition 4.7]{ag} implies that $\depth(\Hom_{R_{\fp}}(M_{\fp},M_{\fp}))\geq2$.
This means that $\Hom_{R}(M,M)$ is $(\Se_2)$. This condition over Gorenstein rings implies the reflexivity.
\end{proof}

\begin{example}Proposition \ref{decref} (ii) is true over rings that are Gorenstein in codimension one. The Gorenstein assumption in Proposition \ref{decref} (i) is important:
Let $R:=k[[X^3,X^4,X^5]]$. Recall that
for any maximal Cohen-Macaulay module $M$, $\Ext^+_R(M,\omega_R)=0$. Applying this for the canonical module,  $\Ext^+_{R}(\omega_R,\omega_R)=0$.
Also, $\Hom_{R}(\omega_R,\omega_R)\simeq R$  is reflexive.
By \cite[4.8]{tensor}, $\omega_R$ is not reflexive.
\end{example}

The ring  in the next result is more general than \cite[Proposition 4.9]{ag}:

\begin{lemma}\label{vext}
Let $(R,\fm)$ be local, $M$ be  locally free on the punctured spectrum  of depth at least two such that
$\depth (\Hom_{R}(M,M))\geq 3$. Then   $\Ext^1_{R}(M,M)=0$.
\end{lemma}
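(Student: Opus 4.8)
The plan is to realize $\Ext^1_R(M,M)$ as the cokernel of a map between two modules whose depths we can control, and then to force this cokernel to vanish by a local-cohomology bookkeeping. First I would record that since $M$ is locally free on the punctured spectrum, $\Ext^1_R(M,M)_{\fp}\cong\Ext^1_{R_{\fp}}(M_{\fp},M_{\fp})=0$ for every $\fp\neq\fm$; hence $E:=\Ext^1_R(M,M)$ is supported only at $\fm$ and is therefore $\fm$-torsion, so that $\HH^0_{\fm}(E)=E$. Next, choosing a free presentation and writing $M':=\Syz_1(M)$, I would apply $\Hom_R(-,M)$ to $0\to M'\to F\to M\to 0$. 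Because $F$ is free we have $\Ext^1_R(F,M)=0$, and the long exact sequence collapses to the four-term exact sequence
$$0\lo \Hom_R(M,M)\lo \Hom_R(F,M)\lo \Hom_R(M',M)\lo E\lo 0.$$

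Writing $H:=\Hom_R(M,M)$, $A:=\Hom_R(F,M)\cong M^{\oplus n}$ and $B:=\Hom_R(M',M)$, I would split this into the two short exact sequences $0\to H\to A\to C\to 0$ and $0\to C\to B\to E\to 0$, where $C:=\coker(H\to A)$. Since $A\cong M^{\oplus n}$ we have $\depth A=\depth M\geq 2$, while $\depth H\geq 3$ by hypothesis; applying the depth lemma (or, equivalently, comparing the local-cohomology long exact sequences, using $\HH^0_{\fm}(A)=\HH^1_{\fm}(A)=0$ and $\HH^0_{\fm}(H)=\HH^1_{\fm}(H)=\HH^2_{\fm}(H)=0$) to the first sequence yields $\HH^0_{\fm}(C)=\HH^1_{\fm}(C)=0$, i.e. $\depth C\geq 2$.

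The crucial observation is the associated-prime computation $\Ass(B)=\Ass\big(\Hom_R(M',M)\big)=\Supp(M')\cap\Ass(M)$. Since $\depth M\geq 2>0$ forces $\fm\notin\Ass(M)$, we obtain $\fm\notin\Ass(B)$ and hence $\HH^0_{\fm}(B)=0$. Feeding $\HH^0_{\fm}(B)=0$ and $\HH^1_{\fm}(C)=0$ into the local-cohomology long exact sequence of $0\to C\to B\to E\to 0$ produces an injection $E=\HH^0_{\fm}(E)\hookrightarrow \HH^1_{\fm}(C)=0$, so $E=\Ext^1_R(M,M)=0$, as desired. The main obstacle is the depth bookkeeping: one must extract $\depth C\geq 2$ from the two inequalities $\depth H\geq 3$ and $\depth M\geq 2$, and then pair this with the elementary but essential fact that $\Hom_R(M',M)$ has positive depth precisely because $\fm$ is not an associated prime of $M$.
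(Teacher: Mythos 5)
Your argument is correct, but it follows a genuinely different route from the paper's. You dualize a free presentation: applying $\Hom_R(-,M)$ to $0\to M'\to F\to M\to 0$ gives the four-term sequence $0\to H\to A\to B\to E\to 0$ with $H=\Hom_R(M,M)$, $A\cong M^{\oplus n}$, $B=\Hom_R(M',M)$, $E=\Ext^1_R(M,M)$, and then you kill $E=\HH^0_{\fm}(E)$ by chasing local cohomology through the two short exact sequences, using $\depth H\geq 3$ and $\depth A=\depth M\geq 2$ to get $\HH^0_{\fm}(C)=\HH^1_{\fm}(C)=0$ for $C=\coker(H\to A)$, and $\Ass(B)=\Supp(M')\cap\Ass(M)\not\ni\fm$ to get $\HH^0_{\fm}(B)=0$. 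The paper instead argues by contradiction: it picks an $M$-regular element $x$, applies $\Hom_R(M,-)$ to $0\to M\stackrel{x}\to M\to M/xM\to 0$ to realize $\ker(x\colon E\to E)$ as the cokernel in $0\to \Hom_R(M,M)/x\Hom_R(M,M)\to\Hom_R(M,M/xM)\to C\to 0$, and then the depth lemma forces $\depth(C)\geq 1$, contradicting the fact that a nonzero submodule of the finite-length module $E$ has depth zero. The two proofs consume the hypotheses at the same places (the bound $\depth\Hom_R(M,M)\geq 3$ loses one in each argument, and $\depth M\geq 2$ is what guarantees positive depth of the middle Hom module, via $\Ass\Hom_R(N,M)=\Supp(N)\cap\Ass(M)$ in both cases), but yours is direct rather than by contradiction, needs no choice of regular element, and makes visible exactly which local cohomology modules each hypothesis annihilates; the paper's is shorter on the page because the reduction modulo $x$ compresses the bookkeeping into a single application of the depth lemma.
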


\begin{proof} We have $\Ext^1_{R}(M,M) $ is of finite length.
Suppose on the contradiction that $\Ext^1_{R}(M,M)\neq0$. Then $\depth(\Ext^1_{R}(M,M))=0$.
Let $x$ be $M$-regular and let $$C:=\ker\bigg(x:\Ext^1_{R}(M,M)\to\Ext^1_{R}(M,M)\bigg).$$ We look at the exact sequence
$$0\to \Hom_R(M,M)/x\Hom_R(M,M) \to\Hom_R(M, M/xM )\to C\to 0.$$Note that \begin{enumerate}
	\item[$\bullet$] $\depth(\frac{\Hom_R(M,M)}{x\Hom_R(M,M)})\geq2$, and
	\item[$\bullet$]  $\depth(\Hom_R(M,\frac{M}{xM}))>0$.
\end{enumerate} 
We use depth lemma to see
$$\depth(C)\geq\left\{\depth\left(\frac{\Hom_R(M,M)}{x\Hom_R(M,M)}\right)-1,\depth\left(\Hom_R(M,\frac{M}{xM})\right)\right\}\geq 1,$$
a contradiction.\end{proof}

The following deals with the reflexivity assumption of \cite[Theorem 2.3]{ces} and may regard
as a generalization of \cite[Theorem 4.4]{ag}:

\begin{corollary}\label{decproc1}
Let $(R,\fm)$ be an abstract complete intersection of dimension at least
$4$. Suppose $M$ is  locally free on the punctured spectrum and
$\depth (\Hom_{R}(M,M))\geq 4$. The following are equivalent: \begin{enumerate}
\item[i)]  $M$ is free,\item[ii)]    $M$ is reflexive,\item[iii)]$\Ext^1_{R}(M,M)=0$.
\end{enumerate}
\end{corollary}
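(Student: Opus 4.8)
The plan is to prove the three-cycle $(i)\Rightarrow(iii)\Rightarrow(ii)\Rightarrow(i)$, feeding the two technical inputs already in hand, Lemma \ref{vext} and Proposition \ref{decref}, into the cited rigidity theorem over complete intersections. Before the cycle I would record the standing facts. An abstract complete intersection is Gorenstein, so both parts of Proposition \ref{decref} apply, and it is Cohen-Macaulay. Moreover, since $M$ is locally free on the punctured spectrum, so is $\Hom_R(M,M)$, because $\Hom$ commutes with localization on a finitely generated module; at each non-maximal $\fp$ the localization $\Hom_{R_\fp}(M_\fp,M_\fp)$ is then free, hence maximal Cohen-Macaulay over $R_\fp$. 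Together with the hypothesis $\depth(\Hom_R(M,M))\geq 4>2$ at $\fm$ (note $\Ht(\fm)=\dim R\geq 4$), this shows $\Hom_R(M,M)$ satisfies $(\Se_2)$, and over the Gorenstein ring $R$ that forces $\Hom_R(M,M)$ to be reflexive. I would prove this once at the outset, since it holds under the standing hypotheses alone and is used repeatedly.

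The implication $(i)\Rightarrow(iii)$ is immediate: if $M$ is free then $\Hom_R(M,M)$ is free, so $\Ext^1_R(M,M)=0$. For $(iii)\Rightarrow(ii)$ I would simply invoke Proposition \ref{decref}(i): its two hypotheses are $\Ext^1_R(M,M)=0$, which is exactly (iii), and reflexivity of $\Hom_R(M,M)$, which is the standing fact just established; hence $M$ is reflexive. Nothing here uses the deep structure of $R$ beyond its being Gorenstein, and this is the part that trades the cohomological condition (iii) for reflexivity.

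The genuine content, and the step I expect to be the main obstacle, is $(ii)\Rightarrow(i)$. Starting from $M$ reflexive, reflexivity over a Gorenstein ring gives $(\Se_2)$ and in particular $\depth_R M\geq 2$; combined with the local freeness on the punctured spectrum and $\depth(\Hom_R(M,M))\geq 3$, Lemma \ref{vext} yields $\Ext^1_R(M,M)=0$. At this point $M$ is a reflexive module, locally free on the punctured spectrum, with vanishing first self-extension over an abstract complete intersection of dimension at least four, and freeness is precisely the conclusion supplied by \cite[Theorem 2.3]{ces}. The hypotheses $\dim R\geq 4$ and $\depth(\Hom_R(M,M))\geq 4$ are exactly what make the rigidity of complete intersections available here, so this is where the argument leaves the elementary homological manipulations and relies on the cited theorem; this is also the generalization of \cite[Theorem 4.4]{ag} that the corollary advertises. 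The upshot of the cycle is that the reflexivity hypothesis of \cite[Theorem 2.3]{ces} is interchangeable with the single condition $\Ext^1_R(M,M)=0$.
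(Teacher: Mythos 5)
Your proof is correct and is essentially the paper's argument: both rest on the same three ingredients (the $(\Se_2)$-implies-reflexive fact for $\Hom_R(M,M)$ over a Gorenstein ring, Lemma \ref{vext}, Proposition \ref{decref}(i), and finally \cite[Theorem 2.3]{ces}), with your cycle $(i)\Rightarrow(iii)\Rightarrow(ii)\Rightarrow(i)$ merely running the paper's cycle $(i)\Rightarrow(ii)\Rightarrow(iii)\Rightarrow(i)$ in the opposite orientation. Isolating the reflexivity of $\Hom_R(M,M)$ as a standing fact at the outset is a tidy presentational improvement but not a mathematical difference.
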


\begin{proof}
 $i)\Rightarrow ii)$ is trivial and  $ii)\Rightarrow iii)$  is a special case of  Lemma \ref{vext}.

 $iii)\Rightarrow i)$
 Since $\Hom_{R}(M,M)$ is $(S_4)$, it is particularly $(S_2)$.
This condition over Gorenstein rings implies the reflexivity. We are in a position to apply Proposition \ref{decref}(i). Due to Proposition \ref{decref}(i) $M$ is reflexive.
This allow us to use \cite[Theorem 2.3]{ces} to show  $M$ is free.
\end{proof}

\begin{fact}\label{vasthm}(See \cite[Theorem 3.1]{wol}) Let $(R,\fm)$ be a one-dimensional Gorenstein ring and $M$ a
finitely generated $R$-module. Then $M$  is projective provided $\Hom_{R}(M,M)$ is  projective.
\end{fact}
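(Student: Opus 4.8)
The plan is to reduce to a local ring and then split into the Artinian and the one--dimensional case, proving a clean statement in the former and reducing the latter to it. Since projectivity of $\Hom_R(M,M)$ and of $M$ may both be tested after localizing at a maximal ideal, and projective $=$ free over a local ring (Kaplansky), it suffices to treat $(R,\fm)$ local Gorenstein of dimension $\le 1$ with $\Hom_R(M,M)$ free and deduce $M$ free. Two harmless reductions come first. Writing $M=M_0\oplus F$ with $F$ the maximal free summand, the $\Hom$ decomposition exhibits $\Hom_R(F,M_0)\cong M_0^{\,\rank F}$ as an $R$--module direct summand of the free module $\Hom_R(M,M)$; a summand of a free module over a local ring is free, so $M_0^{\,\rank F}$ is free, hence so is $M_0$, forcing $M_0=0$ once $F\ne 0$. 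Thus I may assume $M$ has \emph{no} free summand and must prove $M=0$. Moreover $M$ is maximal Cohen--Macaulay: if $\HH^0_{\fm}(M)\ne 0$, composing a surjection $M\twoheadrightarrow k$ with an inclusion $k\cong\Soc(\HH^0_{\fm}(M))\hookrightarrow M$ produces $0\ne\varphi\in\Hom_R(M,M)$ with $\fm\varphi=0$, i.e. a nonzero element of $\Soc(\Hom_R(M,M))$; but a free module over a local ring of positive depth has trivial socle, a contradiction.

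The Artinian core is the cleanest step. If $\dim R=0$, then $A:=R$ is self--injective and $D:=\Hom_A(-,A)$ is Matlis duality ($D^2\cong\mathrm{id}$, $\ell(DN)=\ell(N)$). Tensor--Hom adjunction gives $\Hom_A(M,M)\cong\Hom_A(M,DDM)\cong D(M\otimes_A DM)$, so $\Hom_A(M,M)$ is free iff $M\otimes_A DM$ is. I would then run a length count: if $M\otimes_A W$ is free, freeness and multiplicativity of $\mu$ give $\ell(M\otimes_A W)=\mu(M)\mu(W)\ell(A)$, while the surjection $W^{\mu(M)}\twoheadrightarrow M\otimes_A W$ gives $\ell(M\otimes_A W)\le\mu(M)\ell(W)$; comparing forces $\ell(W)=\mu(W)\ell(A)$, i.e. $W$ free. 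With $W=DM$ this yields $DM$ free, hence $M=D(DM)$ free.

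For $\dim R=1$ I would reduce to the Artinian case by cutting with a regular element. Choose $x\in\fm$ regular on $R$ and on $M$ (prime avoidance); then $\bar R:=R/xR$ is Artinian Gorenstein, $\bar M:=M/xM$, and a standard lifting argument shows $M$ is free as soon as $\bar M$ is free over $\bar R$. As $M$ is maximal Cohen--Macaulay over the Gorenstein $R$ it is totally reflexive, so $\Ext^{>0}_R(M,R)=0$; dualizing $0\to M\xrightarrow{x}M\to\bar M\to 0$ identifies $\overline{M^\ast}\cong\Hom_{\bar R}(\bar M,\bar R)$, the $\bar R$--Matlis dual of $\bar M$, which is exactly the input the Artinian computation needs. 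Applying instead $\Hom_R(M,-)$ to the same sequence gives $\Hom_R(M,M)/x\Hom_R(M,M)\hookrightarrow\Hom_{\bar R}(\bar M,\bar M)$ with cokernel the $x$--torsion of $\Ext^1_R(M,M)$.

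The main obstacle is precisely this last cokernel. Localizing at the height--zero primes and invoking the Artinian core already shows $M$ is free on the punctured spectrum, so $\Ext^1_R(M,M)$ has finite length; but the $x$--torsion of a nonzero finite--length module is never zero for regular $x$, so $\Hom_R(M,M)/x\cong\Hom_{\bar R}(\bar M,\bar M)$ holds \emph{exactly} when $\Ext^1_R(M,M)=0$, and only then does freeness of $\Hom_R(M,M)$ descend to freeness of $\Hom_{\bar R}(\bar M,\bar M)$, hence of $\bar M$ and of $M$. Thus the theorem concentrates in the vanishing $\Ext^1_R(M,M)=0$ for a maximal Cohen--Macaulay $M$ with $\Hom_R(M,M)$ free and no free summand; this is essentially equivalent to the conclusion and is the substantive content of \cite[Theorem 3.1]{wol}. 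I expect to reach it by combining the no--free--summand reduction with the self--duality $\Hom_R(M,M)\cong(M\otimes_R M^\ast)^\ast$ and a trace--ideal analysis, extracting a contradiction from $M\ne 0$.
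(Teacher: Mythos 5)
The paper itself gives no proof of this statement --- it is recorded as a Fact with a citation to Vasconcelos \cite[Theorem 3.1]{wol} --- so the only question is whether your argument stands on its own. Most of it does. The localization to the local case, the splitting off of the maximal free summand via $\Hom_R(F,M_0)\cong M_0^{\rank F}$ being a summand of a free module, and the observation that $M$ must be maximal Cohen--Macaulay (a nonzero element of $\Soc(M)$ yields a nonzero element of $\Soc(\Hom_R(M,M))$, impossible for a nonzero free module over a depth-one ring) are all correct. The Artinian case is a complete and correct proof: self-injectivity gives $\Hom_A(M,M)\cong D(M\otimes_A DM)$, and the comparison of $\ell(M\otimes_A W)=\mu(M)\mu(W)\ell(A)$ with the bound $\ell(M\otimes_A W)\leq\mu(M)\ell(W)$ does force $W=DM$, and hence $M$, to be free. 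The bookkeeping in dimension one is also right: $M$ is free at the minimal primes by the Artinian case, $\Ext^1_R(M,M)$ has finite length, and $\Hom_R(M,M)/x\Hom_R(M,M)\to\Hom_{R/xR}(M/xM,M/xM)$ is an isomorphism exactly when $\Ext^1_R(M,M)=0$.

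The gap is that you stop at precisely the step that carries the content of the theorem in dimension one: the vanishing $\Ext^1_R(M,M)=0$ for a maximal Cohen--Macaulay $M$ with no free summand and free endomorphism ring. Your closing paragraph is a statement of intent (``I expect to reach it by \dots a trace-ideal analysis''), not an argument, and --- as you yourself note --- this vanishing is essentially equivalent to the conclusion, so the reduction has not actually gained ground. Nothing earlier in the proposal can supply it: the Artinian case only controls $M$ at the minimal primes, and the finite-length observation cuts the wrong way, showing that the obstruction, if nonzero, genuinely obstructs the descent to $\overline{R}$. Closing the argument requires a real new input, for instance exploiting $\Hom_R(M,M)\cong(M\otimes_R M^{\ast})^{\ast}$ together with control of the kernel and cokernel of $M\otimes_R M^{\ast}\to(M\otimes_R M^{\ast})^{\ast\ast}$ and the fact that the trace ideal of a module with no free summand lies in $\fm$ --- and one must also be careful with rank bookkeeping when $R$ is not a domain. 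This is where Vasconcelos's proof does its work, and where yours, as written, is incomplete.
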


Due to an example of Vasconcelos,  Fact \ref{vasthm} can't be extended to higher-dimensional Gorenstein rings.
However, we show:

\begin{proposition}\label{decpro}
Let $(R,\fm)$ be a $d$-dimensional Gorenstein ring. Assume the following conditions:
\begin{enumerate}
\item[i)] $\Ext^i_{R}(M,M)=0$  for all $0<i<d$,\footnote{there is nothing when $d=1$.}
\item[ii)] $\Hom_{R}(M,M)$ is  projective.
\end{enumerate}
Then $M$  is projective.
\end{proposition}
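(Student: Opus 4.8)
The plan is to reduce to the local case and then induct on $d:=\dim R$. As in the proof of Proposition \ref{decref}, projectivity is a local property and both hypotheses survive localization (with $d$ replaced by $\dim R_{\fm}\leq d$), so we may assume $(R,\fm)$ is local and $M\neq 0$. The base case $d=1$ is exactly Fact \ref{vasthm}: hypothesis (i) is vacuous, (ii) says $\Hom_R(M,M)$ is projective, and Fact \ref{vasthm} then gives that $M$ is projective. So assume $d\geq 2$ and that the proposition holds over Gorenstein rings of dimension $d-1$.

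First I would manufacture an element that is regular on both $R$ and $M$, in order to cut down the dimension. Since $\Hom_R(M,M)$ is free and nonzero, $\Ass\Hom_R(M,M)=\Ass R$; on the other hand, exactly as in Proposition \ref{decref}, $\Ass\Hom_R(M,M)=\Supp M\cap\Ass M=\Ass M$. Hence $\Ass M=\Ass R$, and since $\depth R=d\geq 1$ we have $\fm\notin\Ass R$. By prime avoidance there is $x\in\fm$ that is a nonzerodivisor on $R$ and on $M$. Put $\bar R:=R/xR$ and $\bar M:=M/xM$; then $\bar R$ is Gorenstein of dimension $d-1$.

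The heart of the argument is to verify that the pair $(\bar R,\bar M)$ again satisfies hypotheses (i) and (ii), so that the inductive hypothesis yields that $\bar M$ is free over $\bar R$. Because $x$ is $M$-regular, reducing a free resolution of $M$ modulo $x$ gives a free $\bar R$-resolution of $\bar M$, whence the change-of-rings isomorphism $\Ext^i_{\bar R}(\bar M,N)\cong\Ext^i_R(M,N)$ for every $\bar R$-module $N$. Taking $N=\bar M$ and applying $\Hom_R(M,-)$ to $0\lo M\xra{x}M\lo\bar M\lo 0$, the long exact sequence shows $\Ext^i_R(M,\bar M)=0$ for $0<i<d-1$, since in that range both $\Ext^i_R(M,M)$ and $\Ext^{i+1}_R(M,M)$ vanish by (i); and at $i=0$, using $\Ext^1_R(M,M)=0$ (which holds because $0<1<d$), it gives $\Hom_R(M,\bar M)\cong\Hom_R(M,M)/x\Hom_R(M,M)$. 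Transporting through the change-of-rings isomorphism, $\Ext^i_{\bar R}(\bar M,\bar M)=0$ for $0<i<d-1$, and $\Hom_{\bar R}(\bar M,\bar M)\cong\Hom_R(M,M)/x\Hom_R(M,M)$ is free over $\bar R$. The inductive hypothesis then forces $\bar M$ to be free.

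Finally I would lift freeness from $\bar M$ to $M$: lifting a basis of $\bar M$ to a minimal generating set of $M$ gives, by Nakayama, a surjection $R^n\twoheadrightarrow M$ with finitely generated kernel $K$; as $x$ is $M$-regular, $\Tor^R_1(M,\bar R)=0$, so $\bar R^{\,n}\to\bar M$ is a surjection of free $\bar R$-modules of equal rank and hence an isomorphism, giving $K/xK=0$ and then $K=0$ by Nakayama. Thus $M\cong R^n$ is free, completing the induction. The main obstacle is the middle paragraph: one must guarantee that projectivity of $\Hom_R(M,M)$ is preserved under reduction modulo $x$, and this is precisely where hypothesis (i) is indispensable — the identification $\Hom_{\bar R}(\bar M,\bar M)\cong\Hom_R(M,M)/x\Hom_R(M,M)$ needs $\Ext^1_R(M,M)=0$, while the correct vanishing range $0<i<d-1$ over $\bar R$ emerges from the range $0<i<d$ over $R$ only through the index shift in the long exact sequence. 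Pinning down this bookkeeping (and the exact form of the change-of-rings isomorphism) is the crux; the regular-element and descent steps are routine.
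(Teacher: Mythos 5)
Your proof is correct and follows essentially the same route as the paper's: produce an element $x$ regular on $R$ and $M$ via $\Ass(\Hom_R(M,M))=\Ass M$, use $\Ext^1_R(M,M)=0$ and the change-of-rings isomorphism to show that both hypotheses descend to $(\bar R,\bar M)$, apply induction with Fact \ref{vasthm} as the base case, and lift freeness back to $M$ by Nakayama. The only difference is that you carry out the inductive step in full generality with the index bookkeeping made explicit, whereas the paper writes out only the cases $d=2$ and $d=3$.
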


\begin{proof}One can assume
that the ring is  local.
We argue by induction on $d$. When $d\leq1$, the claim is in Fact \ref{vasthm}. Suppose, inductively, that $d > 1$ and that the result has been proved
for Gorenstein  rings of smaller  dimensions. Recall that  $$\fm\notin\Ass(\Hom_{R}(M,M))=\Supp(M)\cap \Ass(M)=\Ass(M).$$ Let $x$ be a regular element
both on $R$ and on $M$. We set $\overline{(-)}:=(-)/x(-)$ and we look at $$0\lo M\stackrel{x}\lo M\lo \overline{M} \lo 0 \quad(\ast).$$ Since $\Ext^1_{R}(M,M)=0$,
 $\Hom_{R}(M,\overline{M})\simeq\overline{\Hom(M,M)}$. We combine this along with
$\Hom_{R}(M,\overline{M})\simeq\Hom_{\overline{R}}(\overline{M},\overline{M})$ to deduce that $\overline{\Hom(M,M)}$ is  free over $\overline{R}$.
First, we may assume $d=2$. Thanks to Fact \ref{vasthm}, $\overline{M}$ is free over $\overline{R}$. The exact sequence
$$0\lo \Syz(M)\lo F\lo M\lo 0,$$induces $$\Tor^{R}_1(M,\overline{R})\lo \overline{\Syz(M)}\lo \overline{F}\lo \overline{M}\lo 0.$$ Since $\Tor^{R}_1(M,\overline{R})=\ker(x:M\to M)=0$ and $\overline{F}\simeq \overline{M}$,
we see $\overline{\Syz(M)}=0$.   Nakayama's lemma says that $\Syz(M)=0$.
By definition, $M$  is free.
For simplicity, we may assume $d=3$.  Again, $(\ast)$
induces the  exact sequence $$0=\Ext^1_R(M,M)\lo\Ext^1_R(M,\overline{M})\lo \Ext^2_R(M,M)=0.$$ Hence
$\Ext^1_R(M,\overline{M})=0$. On the other hand, we have  $\Ext^1_R(M,\overline{M})\simeq\Ext^1_{\overline{R}}(\overline{M},\overline{M})$, and consequently,  $\Ext^1_{\overline{R}}(\overline{M},\overline{M})=0$.
Due to the $2$-dimensional case, $\overline{M}$ is free over $\overline{R}$, and so $M$  is free.
\end{proof}

\begin{example}
 The Gorenstein assumption in Proposition \ref{decpro}  is important. Indeed, let $(R,\fm)$ be any  local complete Cohen-Macaulay ring which is not Gorenstein.
We look at the canonical module.  Recall that $\Ext^+_{R}(\omega_R,\omega_R)=0$ and $\Hom_{R}(\omega_R,\omega_R)\simeq R$. But, $\omega_R$ is not projective.
\end{example}

The following may be extend \cite[Theorem 3.1]{tensor} by weakening of $(\Se_{n})$ to  $(\Se_{n-1})$.

\begin{proposition}\label{n-1}
Let $(R,\fm)$ be a hypersurface of dimension $n>2$  and $0\neq M$ be of constant rank. If $M\otimes M$ is $(\Se_{n-1})$,
then $M$ is free.
\end{proposition}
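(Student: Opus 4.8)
The plan is to reduce everything to the vanishing of the self-Tor modules $\Tor_i^R(M,M)$ and then feed this into the depth formula. First I would pass to the local case and, since $M$ has constant rank, assume $R$ is a domain (the general case is analogous, using that $M$ localizes to a free module of the same rank at each minimal prime). The first goal is to prove that $M$ is free on the punctured spectrum. Fix $\fp\neq\fm$ and write $d:=\dim R_\fp\le n-1$. Then $R_\fp$ is again a hypersurface, $M_\fp$ has constant rank, and $(M\otimes_R M)_\fp\cong M_\fp\otimes_{R_\fp}M_\fp$ inherits the condition $(\Se_{n-1})$; since $d\le n-1$, this forces $M_\fp\otimes_{R_\fp}M_\fp$ to be maximal Cohen-Macaulay over $R_\fp$. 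Now I would invoke the maximal Cohen-Macaulay (i.e. $(\Se_d)$) case, which is exactly \cite[Theorem 3.1]{tensor} when $d>2$, the classical one- and two-dimensional Huneke-Wiegand results when $d\in\{1,2\}$, and trivial over the domain $R$ when $d=0$, to conclude that $M_\fp$ is free. Hence $M$ is locally free on the punctured spectrum.

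Consequently the modules $\Tor_i^R(M,M)$ have finite length for every $i\ge1$, and the torsion of $M\otimes_R M$ is supported only at $\fm$. On the other hand, the hypothesis $(\Se_{n-1})$ at $\fm$ gives $\depth_R(M\otimes_R M)\ge\min\{n-1,n\}=n-1\ge2$, so $M\otimes_R M$ is in particular torsion-free. The next step is the crux: using the Auslander-Huneke-Wiegand comparison between the torsion submodule of $M\otimes_R M$ and $\Tor_1^R(M,M)$, valid here because $M$ has a rank over the domain $R$, the torsion-freeness of $M\otimes_R M$ yields $\Tor_1^R(M,M)=0$. Rigidity of Tor over hypersurfaces (Lichtenbaum, Huneke-Wiegand) then propagates this to $\Tor_i^R(M,M)=0$ for all $i\ge1$.

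With this Tor-independence in hand, the depth formula (Auslander, Huneke-Wiegand) gives $2\depth_R M=\depth R+\depth_R(M\otimes_R M)=n+\depth_R(M\otimes_R M)$; combined with $\depth_R(M\otimes_R M)\ge n-1$ this forces $\depth_R M\ge n$, so $M$ is maximal Cohen-Macaulay and $M\otimes_R M$ is maximal Cohen-Macaulay of depth $n$. A final application of the maximal Cohen-Macaulay case \cite[Theorem 3.1]{tensor} (or, equivalently, the observation that vanishing self-Tor over a hypersurface forces $\pd_R M<\infty$, whence freeness by Auslander-Buchsbaum) shows that $M$ is free. The main obstacle I anticipate is precisely the crux step of the second paragraph: converting the depth estimate coming from $(\Se_{n-1})$, which only delivers torsion-freeness rather than the full maximal Cohen-Macaulay property used in \cite[Theorem 3.1]{tensor}, into the exact vanishing $\Tor_1^R(M,M)=0$, and verifying that the hypotheses of the torsion comparison and of Tor-rigidity are genuinely met. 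The point is that the ``$-1$'' in $(\Se_{n-1})$ is affordable exactly because torsion-freeness of $M\otimes_R M$ is all that is needed to ignite this comparison.
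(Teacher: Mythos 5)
Your overall architecture --- kill the self-Tors, apply the depth formula, and finish with Huneke--Wiegand's \cite[Theorem 3.1]{tensor} --- is the same as the paper's, and your endgame (using $\depth(M\otimes M)\geq n-1$ in the depth formula to force $\depth M\geq n$) is just a rephrasing of the paper's parity contradiction $2\depth M=n+(n-1)$. The gap is exactly where you feared it: the claim that \emph{torsion-freeness} of $M\otimes M$ alone yields $\Tor_1^R(M,M)=0$. The comparison that actually exists is via the pushforward $0\to M\to F\to M_1\to 0$, which embeds $\Tor_1(M_1,M)$ (not $\Tor_1(M,M)$) into $M\otimes M$; torsion-freeness therefore kills only $\Tor_1(M_1,M)$, and since $\Tor_{i+1}(M_1,M)\cong\Tor_i(M,M)$ for $i\geq1$, descending to the vanishing you want requires \emph{all} positive Tors of the pair $(M_1,M)$ to vanish. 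Over a regular local ring one vanishing Tor rigidly propagates (Auslander--Lichtenbaum), which is why the statement you are recalling is true there; over a hypersurface, minimal resolutions are eventually $2$-periodic and single-Tor rigidity fails in general (e.g.\ $R=k[[x,y]]/(xy)$, $M=R/(x)$, $N=R/(y)$ has $\Tor_1=0$ but $\Tor_2\neq0$), so one needs two consecutive vanishing Tors. The second vanishing is supplied precisely by a second pushforward, which requires $M\otimes M$ to be reflexive, i.e.\ $(\Se_2)$ --- this is why \cite[Theorem 2.7]{tensor} (the Second Rigidity Theorem) is stated with ``reflexive'' and not ``torsion-free,'' and your closing moral that ``torsion-freeness is all that is needed'' is the wrong lesson.

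The fix is immediate and is what the paper does: since $n>2$, the hypothesis $(\Se_{n-1})$ already gives $(\Se_2)$, which over the Gorenstein (complete intersection) ring $R$ implies $M\otimes M$ is reflexive; now invoke \cite[Theorem 2.7]{tensor} directly to get $\Tor_i^R(M,M)=0$ for $i\geq1$, and your depth-formula bootstrap (equivalently, the paper's even-versus-odd contradiction with $\depth(M\otimes M)=n-1$) finishes the proof. Two smaller points: your reduction ``assume $R$ is a domain'' is not automatic for a hypersurface with modules of constant rank and is never actually used, and the entire first paragraph establishing local freeness on the punctured spectrum is unnecessary once Theorem 2.7 is applied --- the paper skips it, needing only $\Supp(M\otimes M)=\Spec(R)$, which follows from the constant positive rank.
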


\begin{proof}Note that $\Supp(M)=\Spec(R)$. From this, $\Supp(M\otimes M)=\Spec(R)$. In the light of \cite[Theorem 3.1]{tensor} we need to show $\depth(M\otimes M)=n$. Suppose on the contradiction that $\depth(M\otimes M)\neq n$.
Since $M\otimes M$ is $(\Se_{n-1})$, we deduce that $\depth(M\otimes M)=n-1$.
Since $n>2$, $M\otimes M$ is $(\Se_2)$. This condition over complete-intersection rings implies the reflexivity.
Due to the reflexivity we are in a position to use Second Rigidity Theorem \cite[Theorem 2.7]{tensor} to see that  $\Tor_i^+ (M,M)=0$. This vanishing result allow us to apply the depth formula (\cite[Proposition 2.5]{tensor}):
$$\depth(M) + \depth(M)
= \depth(R) + \depth(M \otimes M)
=n+(n-1).$$ The left hand side is an even number and the right hand side is odd. This is a contradiction.
\end{proof}

\begin{remark}The first item shows that having the constant rank in Proposition \ref{n-1} is really needed. The second item shows that the assumption  $(\Se_{n-1})$ can't be weekend to  $(\Se_{n-2})$:
	\begin{enumerate}
		\item[i)]  We look at $R:=\mathbb{Q}[X,Y,Z,W]/(XY)$ and $M:=R/ xR $. It is easy to see
		that  $M^{\ell\otimes}$ is $(\Se_{2})$ (resp. reflexive) for all $\ell>0$ but $M$ is not free. Also, this shows that \cite[Theorem 3.1]{miller2}
		needs the extra assumption: the module $M$ has constant rank.
		\item[ii)]    Let $(R,\fm,k)$ be a $3$-dimensional regular local
		ring and let $M:=\Syz_2(k)$. We left to the reader to check that 
		$M\otimes_RM$ satisfies $(\Se_{1})$. Clearly, $M$ is not free.
	\end{enumerate}
\end{remark}

\section{Reflexivity and UFD}

As an application to Corollary \ref{decproc1}, we recover the following:

\begin{theorem}\label{groth}(Grothendieck 1961)
	Let $(R,\fm)$ be a local  complete intersection domain. If $R_P$ is $\UFD$ for all $P$ of height $\leq3$, then $R$ is $\UFD$.
\end{theorem}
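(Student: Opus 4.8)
The plan is to reduce the statement to the freeness of rank-one reflexive modules and then feed those into Corollary \ref{decproc1}, running an induction on $d:=\dim R$. First I would establish that $R$ is a normal domain. For each height-one prime $P$ the ring $R_P$ is a one-dimensional $\UFD$, hence a $\DVR$, so $R$ is regular in codimension one; being a complete intersection it is Cohen--Macaulay and thus satisfies $(\Se_2)$, so Serre's criterion for normality applies. Consequently it suffices to show that the divisor class group $\Cl(R)$ vanishes, which for a normal local domain amounts to proving that every rank-one reflexive module $M$ (equivalently, every divisorial ideal) is free.

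Next I would set up the induction on $d$. If $d\leq 3$, then $\fm$ itself has height $\leq 3$, so $R=R_{\fm}$ is a $\UFD$ by hypothesis and there is nothing to prove; this is exactly the range excluded by the hypothesis $\dim\geq 4$ in Corollary \ref{decproc1}. Suppose $d\geq 4$. For every prime $P\neq\fm$ the localization $R_P$ is again a local complete intersection domain, its further localizations at primes of height $\leq 3$ are $\UFD$s, and $\dim R_P=\Ht(P)<d$ (using that the Cohen--Macaulay ring $R$ is catenary). The inductive hypothesis therefore yields that $R_P$ is a $\UFD$, so $\Cl(R_P)=0$ and $M_P$ is free. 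In other words, the induction delivers precisely the statement that every rank-one reflexive $M$ is locally free on the punctured spectrum.

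It then remains to check the depth hypothesis of Corollary \ref{decproc1}. Here is the one step requiring an idea: realizing $M$ as a divisorial fractional ideal, one has $\Hom_R(M,M)\cong\{x\in Q(R):xM\subseteq M\}$, and every such $x$ is integral over $R$ by the determinant trick (since $M$ is finitely generated); as $R$ is integrally closed, this forces $\Hom_R(M,M)\cong R$. Because $R$ is Cohen--Macaulay, $\depth\Hom_R(M,M)=\depth R=d\geq 4$. Now all the hypotheses are in place: $R$ is in particular an abstract complete intersection of dimension $\geq 4$, the module $M$ is reflexive and locally free on the punctured spectrum, and $\depth\Hom_R(M,M)\geq 4$. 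The implication (ii)$\Rightarrow$(i) of Corollary \ref{decproc1} then gives that $M$ is free. Since $M$ was an arbitrary rank-one reflexive module, $\Cl(R)=0$ and $R$ is a $\UFD$.

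The main obstacle is supplying the two inputs the corollary demands without circularity. Local freeness on the punctured spectrum is not available \emph{a priori}, but it is exactly what the dimension induction produces, once one observes that the hypotheses descend to each $R_P$. The depth condition $\depth\Hom_R(M,M)\geq 4$ looks like an independent geometric assumption, yet for rank-one reflexive modules over the (now established) normal domain $R$ it collapses to the identity $\Hom_R(M,M)\cong R$; arranging these two facts so that they mesh with the $\dim\geq 4$ range of Corollary \ref{decproc1} is the crux of the argument.
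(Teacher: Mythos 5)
Your proposal is correct and follows essentially the same route as the paper: induction on $\dim R$, normality via Serre's criterion, local freeness on the punctured spectrum from the inductive hypothesis, the identification $\Hom_R(M,M)\cong R$ to supply the depth hypothesis, and then Corollary \ref{decproc1} to conclude freeness. The only cosmetic difference is that you run the argument for arbitrary rank-one reflexive (divisorial) modules and conclude $\Cl(R)=0$, whereas the paper applies the identical chain of reasoning directly to height-one prime ideals and concludes they are principal.
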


\begin{proof}
	The proof is by induction on $d:=\dim R$. We may assume that $d>3$.
	First, we deal with the case $d=4$. Let $\fp$ be any height one prime ideal. 
	Since $\dim R=4$ and $R_{P}$ is $\UFD$ for every prime ideal  $P$ of height $\leq3$ we deduce that $\fp$ is locally free on the punctured spectrum.
	One dimensional normal rings are regular. Thus, $R$ is $(\Se_{2})$ and $(\R_1)$. According to Serre, $R$ is normal. From this, $\Hom_R(\fp,\fp)=R$. In particular,
	$\depth_R(\Hom_R(\fp,\fp))\geq 4$.  Then, $$\fp^{\ast\ast}=\bigcap_{\fq\in \Spec^1(R)}\fp R_{\fq}={\fp}R_{\fp}\cap\bigg(\bigcap_{\fq\in\Spec^1(R)\setminus\{\fp\}}\fp R_{\fq}\bigg)  = \fp R_{\fp}\cap R=\fp.$$
	In view of Corollary \ref{decproc1}, $\fp$ is free. Thus, $\fp$ is principal. Since any height-one prime ideal is principal, it follows that $R$ is $\UFD$. Suppose inductively that the claim holds for $d-1$. This implies that $\fp$ is locally free.
	Similar to the case $d=4$, we know $\depth_R (\Hom_{R}(\fp,\fp))\geq 4$ and  also $\fp$ is reflexive. 
	In view of Corollary \ref{decproc1}, $\fp$ is  free and so principal. In sum, $R$ is $\UFD$. \end{proof}

\begin{fact}\label{samu}(Samuel, see \cite{sam})
Let $R=\bigoplus_{n=0}^{\infty}R_n$ be  an integral domain and $Q$ be the fraction field of $R_0$. Then $R$ is $\UFD$ if and only if $R_0$ is $\UFD$, each $R_n$ is reflexive and $ R\otimes_{R_0} Q$ is $\UFD$.
\end{fact}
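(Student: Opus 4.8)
The plan is to translate factoriality into the divisor class group and to run everything through the single localization at $S:=R_0\setminus\{0\}$, noting that $R\otimes_{R_0}Q=S^{-1}R$. Over a Noetherian domain one has the equivalence: the ring is a $\UFD$ precisely when it is a Krull domain with $\Cl=0$ (a normal Noetherian domain being Krull, and a Krull domain being factorial iff every height-one prime is principal). The engine I would use is the split exact sequence $0\to\Cl(R_0)\to\Cl(R)\to\Cl(S^{-1}R)\to 0$ attached to a graded Krull domain, where the right map is localization (surjective by Nagata) and the left map is divisorial extension $[\fa]\mapsto[(\fa R)^{\ast\ast}]$. Granting this sequence, both implications are a matter of chasing zeros, so the work concentrates in (a) producing the Krull property of $R$ and (b) justifying the sequence; the reflexivity hypothesis on the $R_n$ is exactly what powers both.

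For the converse, assume $R_0$ is a $\UFD$, each $R_n$ is reflexive over $R_0$, and $S^{-1}R$ is a $\UFD$. First I would show $R$ is Krull. Since $R_0$ is factorial it is Krull, and as each $R_n$ is reflexive, Yuan's description (\cite[Proposition 1]{yuan}) gives $R_n=\bigcap_{\Ht\fq=1}(R_n)_{\fq}$, the intersection taken inside $R_n\otimes_{R_0}Q$. Assembling these strand by strand expresses $R$ as the intersection, inside $S^{-1}R$, of the graded overrings $R\otimes_{R_0}(R_0)_{\fq}$ as $\fq$ ranges over height-one primes of $R_0$; over the $\DVR$ $(R_0)_{\fq}$ each strand $(R_n)_{\fq}$ is free, and a local-finiteness check makes the intersection a Krull domain. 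Now the sequence applies: $\Cl(R_0)=0$ because $R_0$ is a $\UFD$ and $\Cl(S^{-1}R)=0$ because $S^{-1}R$ is a $\UFD$, so $\Cl(R)$ is squeezed between two zero groups and vanishes. Hence $R$ is Krull with trivial class group, i.e. a $\UFD$.

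For the forward direction, assume $R$ is a $\UFD$. Then $S^{-1}R$ is a localization of a $\UFD$, hence a $\UFD$, which is the third condition. Next $R_0=Q\cap R$: an element $a/b$ with $a,b\in R_0$ lying in $R$ forces $a=br$ with $r$ necessarily of degree zero, so $r\in R_0$; since $R$ is normal this shows $R_0$ is integrally closed, and $R_0\cong R/R_+$ is Noetherian, hence $R_0$ is Krull. The sequence then gives $\Cl(R_0)\hookrightarrow\Cl(R)=0$, so $R_0$ is a $\UFD$. Finally the reflexivity of each $R_n$ follows from normality of $R$: a standard computation identifies $R_n$ with $\bigcap_{\Ht\fq=1}(R_n)_{\fq}$ (compatibly with the Krull representation $R=\bigcap_{\Ht P=1}R_P$), which is reflexivity by Yuan's criterion.

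The main obstacle is the exact sequence itself, and within it the injectivity of $\Cl(R_0)\to\Cl(R)$ together with the identification of the kernel of the localization map with the image of $\Cl(R_0)$. Concretely, one must show that every divisor class of $R$ admits a homogeneous representative---so that the $\mathbb{G}_m$-action coming from the grading acts trivially on $\Cl(R)$---and that the ``vertical'' height-one primes of $R$ (those meeting $S$, lying over height-one primes $\fq=tR_0$ of $R_0$) are governed precisely by $\Cl(R_0)$ via divisorial extension. It is here that reflexivity of the strands $R_n$ is indispensable: it is what couples the graded structure to the divisorial structure over $R_0$, pins down the multiplicities of the primes above each $\fq$, and secures the local finiteness needed for the Krull property. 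Everything else is formal zero-chasing.
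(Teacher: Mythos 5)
The paper offers no proof of this Fact---it is quoted from Samuel \cite{sam}---so there is nothing internal to compare against; the divisor-class-group strategy you outline is indeed Samuel's. The genuine gap is the very first step of your ``if'' direction: the claim that the hypotheses force $R$ to be a Krull domain. They do not. Take $R=k[t,\,tx,\,x^{2},\,x^{3}]\subset k[t,x]$, graded by the degree in $x$. Then $R_{0}=k[t]$ is a $\UFD$; the strands $R_{1}=tk[t]\cdot x$ and $R_{n}=k[t]\cdot x^{n}$ for $n\geq 2$ are free, hence reflexive, $R_{0}$-modules; and $R\otimes_{R_{0}}k(t)=k(t)[x]$ is a $\UFD$. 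Yet $x=(tx)/t$ lies in the fraction field, is integral over $R$ (as $x^{2}\in R$), and is not in $R$, so $R$ is not normal and in particular not a $\UFD$. This shows two things at once: your reduction (``over the $\DVR$ $(R_{0})_{\fq}$ each strand is free, and a local-finiteness check makes the intersection a Krull domain'') is false, since the rings $R\otimes_{R_{0}}(R_{0})_{\fq}$ occurring in that intersection need not themselves be Krull (here $R\otimes_{R_{0}}k[t]_{(t)}$ is not integrally closed); and the Fact as transcribed in the paper is missing a hypothesis, namely Samuel's standing assumption that $R$ is a Noetherian \emph{integrally closed} (equivalently Krull) domain. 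No argument can repair your first paragraph without restoring that assumption.

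Even granting Krullness, what remains is largely a statement of intent. The whole content of the theorem is the exactness of $0\to\Cl(R_{0})\to\Cl(R)\to\Cl(R\otimes_{R_{0}}Q)\to 0$: injectivity on the left and the identification of the kernel of the localization map (generated by classes of height-one primes $P$ with $P\cap R_{0}\neq 0$) with the image of $\Cl(R_{0})$ under $[\fa]\mapsto[(\fa R)^{\ast\ast}]$. Your closing paragraph correctly lists what must be proved---homogeneous representatives for divisor classes, the bijection between height-one primes of $R_{0}$ and the ``vertical'' height-one primes of $R$, and the role of reflexivity of the strands in controlling $(\fq R)^{\ast\ast}$---but none of it is carried out, so the reflexivity hypothesis (respectively conclusion, in the ``only if'' direction) never enters a verifiable step. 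As it stands the proposal is a reasonable road map for Samuel's argument whose two load-bearing lemmas are left unproved, and one of them is unprovable from the hypotheses as stated.
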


\begin{corollary}\label{regufd}
Let $(R,\fm)$ be a $2$-dimensional regular local ring
and $M$ be finitely generated. Then $\Sym_R(M)$	is regular if and only if $\Sym_R(M)$ is $\UFD$.
\end{corollary}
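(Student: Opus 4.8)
The plan is to show that both properties are equivalent to the freeness of $M$, i.e. to establish the chain
$\Sym_R(M)\text{ regular}\iff M\text{ free}\iff\Sym_R(M)\text{ is }\UFD$. Write $S:=\Sym_R(M)=\bigoplus_{n\geq0}\Sym^n_R(M)$, a graded $R$-algebra with $S_0=R$. The easy half is the implication from freeness: if $M$ is free then $S=R[x_1,\ldots,x_r]$ is a polynomial ring over $R$; since $R$ is a $2$-dimensional regular local ring it is both regular and (by Auslander--Buchsbaum) a $\UFD$, and polynomial extensions preserve both properties. Thus $M$ free forces $S$ to be simultaneously regular and a $\UFD$, which takes care of one arrow in each direction.

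For the direction $S\ \UFD\Rightarrow M$ free, I would apply Samuel's criterion (Fact \ref{samu}) to the graded domain $S$ with $S_0=R$, where $Q$ denotes the fraction field of $R$. Two of the three conditions in that criterion hold automatically: $R$ is a $\UFD$, and $S\otimes_R Q=\Sym_Q(M\otimes_R Q)$ is a polynomial ring over the field $Q$, hence a $\UFD$. Therefore the hypothesis that $S$ is a $\UFD$ feeds back through Fact \ref{samu} to give that every homogeneous component $\Sym^n_R(M)$ is reflexive. Reading this off in degree one shows $M=\Sym^1_R(M)$ is reflexive, and Serre's theorem over the $2$-dimensional regular local ring $R$ (reflexive modules are free) gives that $M$ is free.

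The substantive direction is $S\text{ regular}\Rightarrow M$ free, which I would handle by an embedding-dimension count at the graded maximal ideal $\fM:=\fm S+S_+$, so that $S/\fM=k$. Since $S$ is regular it is normal, and being $\NN$-graded with local degree-zero part $R$ it is connected, hence a domain; localizing, $S_{\fM}$ is a regular local ring. Because $\Sym^n_R(M)=(\Sym^1_R(M))^n\subset\fM^2$ for $n\geq2$, one computes $\fM/\fM^2\cong\fm/\fm^2\oplus M/\fm M$, whence $\emd(S_\fM)=2+\mu(M)$ and so $\dim S_\fM=2+\mu(M)$ by regularity. On one hand $S$ is generated over $R$ by $\mu(M)$ elements, so $\dim S\leq\dim R+\mu(M)=2+\mu(M)$, forcing $\dim S=2+\mu(M)$. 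On the other hand $S$ is a finitely generated domain over $R$ whose fraction field has transcendence degree $\rank(M)$ over $Q$ (read off from $S\otimes_R Q=Q[t_1,\ldots,t_{\rank(M)}]$), so the dimension formula for finitely generated algebras over a Noetherian domain gives $\dim S=\dim R+\rank(M)=2+\rank(M)$. Comparing, $\mu(M)=\rank(M)$; a finitely generated module over a local domain with $\mu=\rank$ is free, since the kernel of a surjection $R^{\mu(M)}\twoheadrightarrow M$ is a torsion submodule of a free module and hence zero. Thus $M$ is free, and then $S$ is a polynomial ring over the $\UFD$ $R$, so $S$ is a $\UFD$.

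I expect the main obstacle to be the bookkeeping in this final count: one must be sure that $S$ is genuinely a domain (so that the transcendence-degree form of the dimension formula applies) and that the computation of $\fM/\fM^2$ is correct, namely that the symmetric algebra contributes no generators in degrees $\geq2$. Both points rest on the defining feature that $\Sym_R(M)$ is generated in degree one over $R$, together with the connectedness and normality that regularity supplies. Once these are pinned down, the equivalence $\mu(M)=\rank(M)\iff M\text{ free}$ closes the argument, and combining it with the two freeness implications above yields the stated biconditional.
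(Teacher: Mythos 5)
Your proposal is correct, and it is actually more complete than the proof printed in the paper. For the implication $\Sym_R(M)$ is a $\UFD$ $\Rightarrow M$ is free $\Rightarrow \Sym_R(M)$ is regular you follow exactly the paper's route: Samuel's criterion (Fact \ref{samu}) yields reflexivity of $M=\Sym_1(M)$, Serre's theorem over the two-dimensional regular local ring yields freeness, and then $\Sym_R(M)$ is a polynomial ring over $R$, hence regular and (by Auslander--Buchsbaum plus Gauss) a $\UFD$. Where you genuinely diverge is the remaining implication: the paper's proof stops after the $\UFD\Rightarrow$ regular direction and never addresses regular $\Rightarrow\UFD$, whereas you supply an argument via a dimension count at the graded maximal ideal $\fM=\fm S+S_+$. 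That count is sound: regularity plus connectedness (no nontrivial homogeneous idempotents, since $S_0=R$ is local) makes $S$ a normal domain; $\Sym_{\geq 2}(M)\subseteq\fM^2$ gives $\emd(S_\fM)=2+\mu(M)$; and the height formula at $\fM$ (legitimate because $R$ is regular, hence universally catenary, and $S/\fM=k$) gives $\Ht(\fM)=2+\rank(M)$, so regularity forces $\mu(M)=\rank(M)$ and hence $M$ free. Two pieces of bookkeeping are worth making explicit: (a) the identification $\dim S=\dim S_{\fM}=\Ht(\fM)$ rests on $S$ being graded with unique graded maximal ideal $\fM$; and (b) the global statement ``$\dim S=\dim R+\operatorname{tr.deg}_{Q}Q(S)$'' fails for general finitely generated algebras over a local domain --- what you actually need, and have, is the local height formula $\Ht(\fM)=\Ht(\fm)+\operatorname{tr.deg}_{Q}Q(S)-\operatorname{tr.deg}_{k}(S/\fM)$. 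With those points spelled out, your argument closes both directions of the biconditional, one of which the paper leaves implicit.
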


\begin{proof}  Assume that $\Sym_R(M)$ is $\UFD$.
	By Fact \ref{samu} $M$ is reflexive. Since
	$R$ is $2$-dimensional and regular, $M$ is free. Let $n:=\rank M$. Then
	$ \Sym_R(M)=R[X_1,\ldots,X_n]$ which is regular.
\end{proof}

The $2$-dimensional assumption is important:

\begin{example}\label{sym}
Let $(R,\fm,k)$ be a $3$-dimensional regular local ring and $M:=\Syz_2(k)$.
Then $\depth(\Sym_n(M))=2$ for all $n>0$. In particular, $\Sym_R(M)$ is a nonregular $\UFD$.
\end{example}

\begin{proof} Let $\{x,y,z\}$ be a regular parameter sequence of $R$.
In view of \cite[Proposition 5]{sam}	we know that $\Sym(M)\cong \frac{R[X,Y,Z]}{(xX+yY+zZ)}$ is $\UFD$. Due to Fact \ref{samu} we deduce that
$\Sym_n(M) $ is reflexive.  Since  the following relation $$(xX+yY+zZ) R[X,Y,Z]_{n-1}$$ is   nontrivial,  we deduce that $\Sym_n(M) $ is not free. Thus
 $\depth(\Sym_n(M))=2$. We set  $$\fn:=\fm\oplus (\oplus_{n>0}\Sym_n(M)).$$As $xX+yY+zZ\in\fn^2$, we see
 $\Sym_R(M)$ is  not regular.
\end{proof}

Samuel constructed a reflexive module $M$ such that
$\Sym_2(M)$ is not reflexive.
Here, there is another one:

\begin{example}\label{four}
	Let $(R,\fm,k)$ be a $4$-dimensional regular local ring.
	Then $M:=\Syz_2(k) $ is reflexive. In view of \cite[Proposition 3.1.11]{wol2},    $\pd(\Sym_2(M))=4$.
	By Auslander-Buchsbaum formula,
	$\depth(\Sym_2(M))=0$. In particular, $\Sym_2(M)$ is not reflexive.
\end{example}

\begin{conjecture}
(See \cite[Conjecture 6.1.4]{wol2}) Let $R$ be a regular local ring. If $\Sym_R(M)$ is $\UFD$, then $\pd(M)\leq 1$.
\end{conjecture}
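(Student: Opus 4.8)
The plan is to combine Samuel's criterion (Fact \ref{samu}) with an Auslander--Buchsbaum depth count, reducing the conjecture to a single asymptotic statement about symmetric powers. First I would record that $\Sym_R(M)$ being a $\UFD$ forces $M$ to be torsion-free: a nonzero torsion element $m\in M=\Sym_1(M)$ would satisfy $am=0$ with $0\neq a\in R$, producing zero-divisors in the domain $\Sym_R(M)$. Regarding $\Sym_R(M)=\bigoplus_{n\ge 0}\Sym_n(M)$ as a graded domain with degree-zero part $R$, the generic fibre $\Sym_R(M)\otimes_R Q\cong Q[T_1,\dots,T_r]$ (with $r:=\rank M$ and $Q$ the fraction field of $R$) is a polynomial ring over a field, hence a $\UFD$, while $R$ is a $\UFD$ since it is regular. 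Thus Fact \ref{samu} applies and yields that every graded piece $\Sym_n(M)$ is a reflexive $R$-module; in particular $M$ itself is reflexive.

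Next I would dispose of the low-dimensional range by a depth estimate. Write $d:=\dim R$. Since $R$ is regular, hence Gorenstein, reflexivity of $M$ is equivalent to Serre's condition $(\Se_2)$, so $\depth_R M\ge\min\{2,d\}$. For $d\le 3$ the Auslander--Buchsbaum formula gives $\pd_R M=d-\depth_R M\le d-2\le 1$, and we are done. Hence assume $d\ge 4$ and argue by contradiction: suppose $\pd_R M=p\ge 2$, equivalently $\depth_R M=d-p\le d-2$.

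The heart of the argument is to show that $\pd_R M\ge 2$ forces some symmetric power to fail $(\Se_2)$, contradicting that all $\Sym_n(M)$ are reflexive (so all have $\depth\ge 2$). I would present $M$ by its minimal free resolution, $R^{\beta_1(M)}\to R^{\beta_0(M)}\to M\to 0$, so that $\Sym_R(M)=R[T_1,\dots,T_{\beta_0(M)}]/I$, where $I$ is generated by the $\beta_1(M)$ linear forms read off the presentation matrix; since $\Sym_R(M)$ is a domain of dimension $d+r$, the ideal $I$ has $\Ht(I)=\beta_0(M)-r$. When $\pd_R M\le 1$ the relation module is free, so $\beta_1(M)=\beta_0(M)-r=\Ht(I)$, whence $I$ is a complete intersection of linear forms and the symmetric powers inherit good depth (this is exactly the benign case realised by $M=\Syz_2(k)$ over a $3$-dimensional regular ring in Example \ref{sym}). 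When $\pd_R M\ge 2$ the syzygy $\Syz_1(M)$ is torsion-free but non-free, so $\beta_1(M)=\mu(\Syz_1(M))>\rank\Syz_1(M)=\beta_0(M)-r=\Ht(I)$; thus $I$ is not a complete intersection, and I would invoke the projective-dimension estimate for symmetric powers underlying Example \ref{four} (the analogue of \cite[Proposition 3.1.11]{wol2}) to conclude that $\depth_R\Sym_n(M)$ drops to at most $1$ for a suitable $n$. This contradicts the reflexivity of $\Sym_n(M)$ and forces $p\le 1$.

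I expect this last step to be the genuine obstacle. Symmetric powers admit no functorial resolution as transparent as the Koszul or Eagon--Northcott complexes available once $\pd_R M\le 1$, so controlling $\Tor^R_i(\Sym_n(M),k)$, and hence $\depth_R\Sym_n(M)$, when $\pd_R M\ge 2$ is delicate. Concretely I would attack it through the approximation complexes of Herzog--Simis--Vasconcelos together with the failure of the sliding-depth (equivalently $\mathcal{F}_1$) condition forced by $\beta_1(M)>\Ht(I)$, aiming to exhibit a nonzero high $\Tor$ that survives and pins $\depth_R\Sym_n(M)\le 1$; alternatively one can feed the minimal resolution of $M$ into the standard spectral sequence computing the homology of the symmetric powers and track a surviving term. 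Making either route yield the depth collapse in full generality, rather than merely for syzygy modules as in Example \ref{four}, is the crux of the conjecture.
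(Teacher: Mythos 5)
This statement is not proved in the paper at all: it is recorded as an open problem, quoted verbatim from \cite[Conjecture 6.1.4]{wol2}, so there is no argument of the author's to compare yours against. Your proposal does not close it either, and you say as much yourself. The preliminary reductions are correct and worth having on record: $\Sym_R(M)$ a domain forces $M$ torsion-free; Fact \ref{samu} applies (the degree-zero part $R$ is a $\UFD$ and the generic fibre is a polynomial ring over a field), so every $\Sym_n(M)$ is reflexive, hence $(\Se_2)$ over the regular ring $R$; and for $\dim R\le 3$ the Auslander--Buchsbaum formula already gives $\pd_R(M)\le 1$ from $\depth_R(M)\ge 2$, consistently with Corollary \ref{regufd}. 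This shows the conjecture is equivalent to the assertion that $\pd_R(M)\ge 2$ forces $\depth_R(\Sym_n(M))\le 1$ for some $n$, which is a faithful and clean reformulation.

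The genuine gap is the step where you ``invoke the projective-dimension estimate for symmetric powers underlying Example \ref{four}.'' No such estimate exists in the generality you need: \cite[Proposition 3.1.11]{wol2} and Example \ref{four} compute $\pd(\Sym_2(M))$ only for the specific module $M=\Syz_2(k)$, and there is no known statement of the form ``if the defining ideal $I\subset R[T_1,\dots,T_{\beta_0}]$ of $\Sym_R(M)$ is not a complete intersection of linear forms, then some $\Sym_n(M)$ has depth at most one.'' The two routes you propose do not supply it: the approximation complexes of Herzog--Simis--Vasconcelos are acyclic (and hence compute depth) precisely under sliding-depth or $\mathcal{F}_1$-type hypotheses, which are exactly what fail when $\beta_1(M)>\Ht(I)$, so in the regime you need them they give no control; and the spectral-sequence approach to $\Tor^R_i(\Sym_n(M),k)$ requires exhibiting a surviving nonzero term, which is the whole difficulty. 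Since you explicitly identify this step as ``the crux of the conjecture,'' what you have is a correct reduction together with a program of attack, not a proof; the statement remains open both in the paper and in your proposal.
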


Samuel \cite{sam} remarked that there is no symmetric   analogue of Auslander's theorem
on torsion part of tensor powers.  We present a tiny remark:

\begin{proposition}\label{obsym}
	Let $(R,\fm)$ be a regular local ring  and $M$ be    of rank one. If
	$\Sym_n(M)$ is reflexive for some  $n\geq  \max\{2,\dim R\}$, then $M$ is free.
\end{proposition}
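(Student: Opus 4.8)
The plan is to reduce everything to a count of minimal generators. First I would observe that, since $R$ is a domain and $M$ has rank one, the symmetric power $\Sym_n(M)$ again has rank one: tensoring with the fraction field and using that the symmetric algebra commutes with base change gives $\Sym_n(M)\otimes_R Q(R)\cong \Sym_n^{Q(R)}(Q(R))\cong Q(R)$. So $\Sym_n(M)$ is a rank-one module which, by hypothesis, is reflexive.

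Next I would pin $\Sym_n(M)$ down completely. A regular local ring is a $\UFD$ by the Auslander--Buchsbaum theorem, so its divisor class group $\Cl(R)$ vanishes; hence every rank-one reflexive (equivalently, torsion-free divisorial) module is free. Therefore $\Sym_n(M)\cong R$, and in particular its minimal number of generators is $1$.

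The core step is then to transfer this single-generator information back to $M$. Since the symmetric algebra functor commutes with base change and respects the grading, there is an isomorphism of $k$-vector spaces $\Sym_n(M)\otimes_R k\cong \Sym_n^{k}(M\otimes_R k)\cong \Sym_n^{k}(k^{\,d})$ with $d=\dim_k(M\otimes_R k)$ the minimal number of generators of $M$; its dimension is $\binom{d+n-1}{n}$. By Nakayama this dimension is the minimal number of generators of $\Sym_n(M)$, which we have just seen equals $1$. Thus $\binom{d+n-1}{n}=1$, and since $n\ge 1$ this forces $d=1$. Hence $M$ is cyclic, say $M\cong R/I$; a cyclic module of rank one over the domain $R$ must have $I=0$ (otherwise $R/I\otimes_R Q(R)=0$ has rank zero), so $M\cong R$ is free.

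I expect no serious obstacle: the argument is essentially formal once the two standard inputs are in place, namely the triviality of the class group of a regular local ring and the base-change behaviour of symmetric powers. The only point requiring a little care is the generator identity $\dim_k\bigl(\Sym_n(M)\otimes_R k\bigr)=\binom{d+n-1}{n}$ and its correct application. It is worth noting that this route already succeeds for every $n\ge 1$, so the stated hypothesis $n\ge\max\{2,\dim R\}$ is more than the generator count needs; one might instead view that bound as the natural threshold for a depth-theoretic variant of the argument, in the spirit of Examples \ref{sym} and \ref{four}.
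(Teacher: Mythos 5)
Your proof is correct, and it takes a genuinely different route from the paper's. The paper argues by induction on $\dim R$: it localizes to reduce to the case where $M$ is free on the punctured spectrum, compares $M^{\otimes n}$ with $\Sym_n(M)$ through the finite-length kernel of the surjection $M^{\otimes n}\to\Sym_n(M)$ so as to transfer the vanishing $\HH^1_{\fm}(\Sym_n(M))=0$ (coming from $\depth(\Sym_n(M))\geq 2$, via reflexivity and Fact \ref{samu}) to $\HH^1_{\fm}(M^{\otimes n})=0$, and then invokes a cohomological rigidity theorem for tensor powers over complete intersections (Fact A, from \cite{acs}); that last input is exactly where the hypothesis $n\geq\max\{2,\dim R\}$ enters. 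Your argument instead exploits the $\UFD$ property of a regular local ring decisively: a finitely generated reflexive module of rank one over a ring with trivial class group is free, so $\Sym_n(M)\cong R$ and $\mu(\Sym_n(M))=1$, while base change of symmetric powers to the residue field gives $\mu(\Sym_n(M))=\binom{\mu(M)+n-1}{n}$, forcing $\mu(M)=1$ and hence, by the rank-one condition, $M\cong R$. This is more elementary (no induction, no local cohomology, no reliance on the unpublished rigidity result), and as you observe it works for every $n\geq 1$, so it also subsumes Corollary \ref{ob3} in the rank-one case without the torsion-free hypothesis there. What the paper's approach buys is portability: the local-cohomology comparison and the rigidity input still make sense over complete intersections and other rings with nontrivial class group, where your first step (rank-one reflexive $\Rightarrow$ free) breaks down; the class-group shortcut is special to factorial rings.
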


\begin{proof}We   proceed by induction
	on $d:=\dim R$. Suppose $d=1$. Due to the
	structure theorem for finitely generated modules over $\DVR$,
	$M=F\oplus T$ where $F$ is free and $T:=\bigoplus_{i=1}^t \frac{R}{\fm^{n_i}}$ is the torsion part.
	Suppose on the contradiction that $T\neq0$, and recall that	$$\Sym_n(M)=\bigoplus_{i+j=n}\Sym_i(F)\otimes_R\Sym_j(T)\cong L_1\oplus\big( L_2\otimes\Sym_1(\frac{R}{\fm^{n_i}})\big)=L_1\oplus( L_2\otimes \frac{R}{\fm^{n_1}}),$$ where $L_2\neq 0.$
Hence, $\Sym_n(M)$ is not reflexive.
This contradiction says that   $T=0$  and consequently, $M$ is free. Suppose now that $d>1$
and the claim holds for all regular rings of smaller dimension.
Let $\fp\in(\Spec(R)\setminus\max(R))$, and recall that $\Sym_n(M)_{\fp}\cong \Sym_n(M_{\fp})$
	is reflexive over $R_{\fp}$. Inductively, $M_{\fp}$ is free  over $R_{\fp}$.
	In particular, we may assume that $M$ is locally free over the punctured spectrum.
	
There is a surjective map $M^{\otimes n}\stackrel{\pi}\lo \Sym_n(M)\to 0$.
Set $K:=\ker(\pi)$. 
Again, let $\fp$ be a prime ideal which is not maximal. Then, we have
$$0\lo K_{\fp}\lo (M^{\otimes n})_{\fp}\lo \Sym_n(M)_{\fp}\lo 0\quad(\ast)$$
Since $M$ is locally free of rank  $1$,  we have $$\Sym_n(M)_{\fp}\cong\Sym_n(M_{\fp})\cong (R_{\fp}[X])_n.$$  
This is  free over $R_{\fp}$ and is of rank $1$. Also, $(M^{\otimes n})_{\fp}$
is  free over $R_{\fp}$ and is of rank $1$. In particular,
the sequence $(\ast)$ splits, i.e., $K_p\oplus R_{\fp}\cong R_{\fp}$.
It follows that $K_p=0$. Hence, $K$ is supported in $\fm$. In particular, it is of finite length. According to Grothendieck's vanishing theorem, we know $\HH^+_{\fm}(K)=0$. This yields the following exact sequence
$$0=\HH^1_{\fm}(K)\lo \HH^1_{\fm}(M^{\otimes n})\lo \HH^1_{\fm}(\Sym_n(M))\lo \HH^2_{\fm}(K)=0 \quad(+)$$
Another use of Fact \ref{samu} implies that $\Sym_n(M)$ is reflexive.
Since $\dim R>1$, we deduce that  $\depth(\Sym_n(M))\geq2$. Due to the cohomological characterization of depth,
 $\HH^1_{\fm}(\Sym_n(M))=0$. In view of (+) we see $\HH^1_{\fm}(M^{\otimes n})=0$. 
Now, we recall the following recent result:
\begin{enumerate}
	\item[Fact A)]  (See \cite{acs}) 	Let  $(A,\fn)$  be a local complete intersection ring and let $N$ be   rigid  and locally free. Let $m\geq  \max\{2,\dim A\}$. If $\HH^i_{\fn}(N^{\otimes m})=0$ for some $0\leq i\leq\depth_R(N)$, then $N$ is free.
\end{enumerate} 
Over regular rings any module is rigid.
Therefore, Fact A) implies that $M$ is free.
 \end{proof}

In the case of ideals the following fact is due to  Micali and Samuel \cite{sam}:
\begin{fact}\label{mi}
	Let $M$ be a module of rank one. If
	$\Sym_R(M)$ is $\UFD$, then $M$ is projective.
\end{fact}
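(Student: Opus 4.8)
The plan is to run the hypothesis through Samuel's graded criterion, Fact \ref{samu}, and then exploit the vanishing of the divisor class group of a unique factorization domain. Write $S := \Sym_R(M) = \bigoplus_{n\geq 0}\Sym_n(M)$, viewed as an $\mathbb{N}$-graded ring with $S_0 = R$ and degree-one component $S_1 = M$. Since a UFD is in particular an integral domain and $R = S_0$ sits inside $S$ as a subring, the base ring $R$ is a domain, so the hypotheses of Fact \ref{samu} are met.

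First I would apply Fact \ref{samu} to $S$. Because $S$ is a UFD, the criterion forces $R = S_0$ to be a UFD and forces every homogeneous piece $\Sym_n(M)$ to be reflexive as an $R$-module. Taking $n = 1$ gives that $M = \Sym_1(M)$ is a reflexive $R$-module of rank one. Now I would invoke the structure of rank-one reflexives over a UFD: embedding $M$ into $M\otimes_R Q \cong Q$, with $Q$ the fraction field of $R$, realizes $M$ as a divisorial fractional ideal, and since $\Cl(R) = 0$ every such ideal is principal. Hence $M$ is free of rank one, and in particular projective. This is precisely the class-group argument already used in the Remark of Section 6, where freeness of rank-one reflexives is deduced from the triviality of the class group of a UFD.

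The only step demanding genuine care is the bookkeeping around Fact \ref{samu}: one must confirm that $S$ is genuinely nonnegatively graded with $S_0 = R$, that the word ``reflexive'' in that statement refers to reflexivity over $R_0 = R$ (so that the conclusion transfers to $M$ itself), and that $R$ inherits enough of the standing Noetherian and domain hypotheses to identify isomorphism classes of rank-one reflexives with $\Cl(R)$. Once these points are settled, the implication that a rank-one reflexive module over a UFD is free is immediate, so I do not anticipate any serious obstacle beyond verifying the applicability of the cited criterion.
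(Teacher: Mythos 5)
Your proposal is correct and follows essentially the same route as the paper: apply Fact \ref{samu} to deduce that $R$ is a $\UFD$ and $M=\Sym_1(M)$ is reflexive, then use triviality of the (divisor) class group of a $\UFD$ to conclude that a rank-one reflexive module is free, hence projective. The paper's only cosmetic difference is that it first reduces to the local case and phrases the class-group step via $\Cl(R)=\pic(R)=0$.
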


\begin{proof} We may assume that $R$ is local.
	By Fact \ref{samu} $M$ is reflexive and $R$ is $\UFD$. Since $R$ is normal, $\Cl(R)=\pic(R)$. Recall that
	$\pic(R)$ is the isomorphism classes of rank $1$
	reflexive modules. Since $\Cl(R)=0$, $M$ is free.
 \end{proof}

\begin{corollary} \label{ob3}Let $(R,\fm)$  be a $3$-dimensional regular local ring,
	 $M$ be torsion-free and of rank one. If
	$\Sym_n(M)$ is reflexive for some  $n>0$, then $M$ is free.
\end{corollary}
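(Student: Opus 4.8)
The plan is to split on the value of $n$. Proposition \ref{obsym} already settles every $n\ge\max\{2,\dim R\}=3$, so only $n=1$ and $n=2$ remain. For $n=1$ the hypothesis is that $\Sym_1(M)=M$ itself is reflexive; since $R$ is a regular local ring it is a $\UFD$, so $\Cl(R)=\pic(R)=0$ and every rank-one reflexive module is free (exactly the class-group reasoning used in the proof of Fact \ref{mi}). Hence $M$ is free, and the whole weight of the corollary sits on the boundary case $n=2$.

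For $n=2$ I would first reduce to an $\fm$-primary ideal. If $\fp$ is a nonmaximal prime then $R_\fp$ is regular local of dimension at most $2$ and $\Sym_2(M)_\fp\cong\Sym_2(M_\fp)$ is reflexive, so $2\ge\max\{2,\dim R_\fp\}$ and Proposition \ref{obsym} forces $M_\fp$ to be free. Thus $M$ is locally free of rank one on the punctured spectrum, its reflexive hull $M^{\ast\ast}$ is a rank-one reflexive module, hence free by the $\UFD$ argument above, so $M^{\ast\ast}\cong R$. Since $M$ is torsion-free of rank one we may regard $M$ as an ideal $I\subseteq R$ with $R/I$ of finite length; if $I=R$ we are done, so assume $I$ is a proper $\fm$-primary ideal and aim for a contradiction.

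The plan for the core is to drop the dimension by one so as to reach the admissible range of \cite{acs}. Choose a regular parameter $x$, so that $\bar R:=R/xR$ is a $2$-dimensional regular (hence complete intersection) local domain; then $\bar M:=M/xM$ is again torsion-free of rank one, is locally free on the punctured spectrum of $\bar R$ (localizing the previous step), is rigid because $\bar R$ is regular, and satisfies $\Sym_2^{\bar R}(\bar M)\cong\Sym_2(M)\otimes_R\bar R$. Now $2=\max\{2,\dim\bar R\}$, so Fact A of \cite{acs} applies to $\bar M$ over $\bar R$, and freeness of $\bar M$ will lift to freeness of $M$ by Nakayama's lemma. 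To feed Fact A it suffices to find a single index $0\le i\le\depth_{\bar R}(\bar M)$ with $\HH^i_{\fn}(\bar M^{\otimes 2})=0$. To locate such an $i$ I would transport local-cohomology data through the two sequences $0\to K\to M\otimes M\to\Sym_2(M)\to 0$, where $K=\operatorname{tors}(M\otimes M)$ has finite length, and $0\to M\stackrel{x}\lo M\to\bar M\to 0$: reflexivity of $\Sym_2(M)$ gives $\depth\Sym_2(M)\ge2$, whence $\HH^0_{\fm}(M\otimes M)=K$ and $\HH^1_{\fm}(M\otimes M)=0$, and a diagram chase then identifies $\HH^0_{\fn}(\bar M^{\otimes 2})$ with $K/xK$ and embeds $\HH^1_{\fn}(\bar M^{\otimes 2})$ into $\HH^2_{\fm}(\Sym_2(M))$.

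The hard part will be exactly this vanishing in the range $0\le i\le\depth_{\bar R}(\bar M)$. When the torsion $K$ vanishes the index $i=0$ closes the argument immediately, so the genuine obstacle is the case $K\ne0$, i.e. the failure of $I$ to be of linear type in degree two: there $\HH^0_{\fn}(\bar M^{\otimes 2})=K/xK\ne0$ by Nakayama, and one must instead produce the vanishing at $i=1$, which forces one to control both $\HH^2_{\fm}(\Sym_2(M))$ and the depth of $\bar M$. In other words, the entire difficulty is concentrated in showing that a non-free (proper $\fm$-primary) rank-one module cannot have reflexive second symmetric power, equivalently that the finite-length torsion of $M\otimes M$ obstructs the depth-two property of $\Sym_2(M)$; this is the step I expect to require the full strength of the rigidity and local-cohomology input of \cite{acs}.
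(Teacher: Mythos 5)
Your handling of $n\ge 3$ (Proposition \ref{obsym}) and of $n=1$ (a rank-one reflexive module over the $\UFD$ $R$ is free) matches the paper, and your reduction of the $n=2$ case --- $M$ locally free on the punctured spectrum, $M\cong I$ with $I$ either $R$ or $\fm$-primary, and $\HH^1_{\fm}(M^{\otimes 2})=0$ extracted from $\depth\Sym_2(M)\ge 2$ and the finite length of $K$ --- is exactly the first half of the paper's argument. The gap is in the second half, and it is genuine on two counts. First, the route you choose (pass to $\bar R=R/xR$ and feed Fact A of \cite{acs} with $m=2=\max\{2,\dim\bar R\}$) is blocked at the outset: for a \emph{proper} $\fm$-primary ideal $I$ the module $\bar M=I/xI$ is \emph{not} torsion-free. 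Its torsion submodule is $(I\cap xR)/xI=x(I:x)/xI\cong (I:x)/I$, which contains $\Soc(R/I)=(I:\fm)/I\neq 0$ because $R/I$ has finite length. Hence $\depth_{\bar R}(\bar M)=0$, the admissible range in Fact A collapses to $i=0$, and $i=0$ would require $\bar M^{\otimes 2}$ to have no $\fn$-torsion, which fails; the index $i=1$ you propose to fall back on lies outside the range $0\le i\le\depth_{\bar R}(\bar M)$. Second, you yourself flag the remaining vanishing as the ``genuine obstacle'' and defer it to the full strength of \cite{acs}; so the proposal is a plan rather than a proof precisely at the one case ($n=2$) that you correctly identify as carrying the whole weight of the corollary.

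The paper never reduces modulo a parameter: it stays in dimension $3$ and concludes directly from the torsion-freeness of $M$ together with $\HH^1_{\fm}(M^{\otimes 2})=0$. One way to make that terse step explicit with only the data you already have: let $T:=\HH^0_{\fm}(I\otimes I)$ be the (finite length) torsion of $I\otimes I$; the quotient $(I\otimes I)/T$ is the image of the multiplication map $I\otimes I\to Q(R)$, i.e. it is isomorphic to the ideal $I^2$. The local cohomology sequence of $0\to T\to I\otimes I\to I^2\to 0$ gives $\HH^1_{\fm}(I^2)\cong\HH^1_{\fm}(I\otimes I)=0$, and $\HH^0_{\fm}(I^2)=0$ since $I^2$ is an ideal, so $\depth(I^2)\ge 2$; as $I^2$ is also free on the punctured spectrum, it satisfies $(\Se_2)$, hence is reflexive, hence (being of rank one over a $\UFD$) free, i.e. $I^2$ is principal. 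A principal ideal has height at most one, while $I^2$ is $\fm$-primary unless $I=R$; since $\dim R=3$ this forces $I=R$, so $M$ is free. This closes exactly the step your proposal leaves open, using no rigidity input from \cite{acs} beyond what was already consumed in the localization step.
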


\begin{proof}
The case $n=1$ (resp. $n\geq 3$ ) is in   Fact \ref{mi} (resp. Proposition \ref{obsym}).
Without loss of the generality we assume that $n=2$.
By the proof of  Proposition \ref{obsym}, we assume that $M$ is locally free and 
that  $\HH^1_{\fm}(M^{\otimes 2})=0$. Since $M$ is torsion-free, it follows that
$M$ is free.
\end{proof}

The rank $1$ condition is important, see  Example \ref{sym}.

\section{Reflexivity of ideals} 

In this section we study some aspects of the  following problem and also its assumption, e.g., the artinian and et cetera:
\begin{problem}
	Let  $(R,\fm)$ be artinian and suppose $\fm^n\neq 0$ is  reflexive. Find $n$ such that  $R$ becomes Gorenstein.
\end{problem}

\begin{proposition}\label{29}  Let $(R,\fm,k)$ be a local artinian ring. Assume that there exists a non-negative integer $n$
	such that $\fm^n\neq 0$ and $\fm^{n+1}=0$. If $\fm^n$ is reflexive, then $R$ is Gorenstein.
\end{proposition}
The proof shows $ (\fm^n)^*  $ is cyclic.
\begin{proof} Consider the minimal presentation of $\Hom_{R} (\fm^n ,R)$
	$$ R^{n_1} \longrightarrow R^{n_0} \longrightarrow (\fm^n)^* \longrightarrow 0 \quad(+)$$This
gives	the exact sequence  $ 0\to (\fm^n) \to R^{n_0} \overset{f}\longrightarrow R^{n_1},$   where $f:=(a_{ij})_{{n_{1}}
		\times {n_{0}}}$ with $ a_{ij} \in \fm$.
 and so $f(\Soc (R^{n_0}))=0$.
	Now, $$\Soc(R)^{\oplus n_{0}}=\Soc (R^{n_{0}}) \subseteq \ker(f)\cong \fm^n .$$ As $\fm^{n+1}=0 $, one has  $\fm^n \subseteq \Soc(R)$. Consequently,
	 $ \Soc (R)^{\oplus n_{0}} \subseteq \fm^n \subseteq \Soc(R).$ 
This implies that $ n_0 =1$ and	$ \fm^n =\Soc (R) $. We plug  the first one into $(+)$ and observe that $ (\fm^n)^*  $ is cyclic.
 Applying $(-)^\ast$ to it, yields that
	$\Soc (R)^*$ is cyclic. Recall from its definition,
	$ \Soc (R) $ is an  $R/\fm$-vector space, and by denoting its dimension with $ t := \dim_{k} (\Soc(R)) $ we have:  $$ \Soc(R)^* = \Hom( \oplus _t k,R) = \oplus_t \Hom(R/\fm , R) = \oplus_t \lbrace r
	\in R \vert r\fm =0 \rbrace = \oplus_t \Soc (R) .$$
	Since $\Soc (R)^*$ is cyclic we know $ t=1 $  and also  $ \Soc(R) $ is cyclic. In other words,
	type of $R$ is one. It remains to note that Cohen-Macaulay rings of type one are Gorenstein.
\end{proof}

\textbf{Second proof of Proposition \ref{29}.} Since $\fm$ annihilates $\fm^n$, we observe that $\fm^n$ is a $k$-vector space.
Since any directed summand of a reflexive module is again reflexive,
we deduce that $R/\fm$ is reflexive. By the first argument, we have $$ R/\fm\cong \bigoplus_{\Soc(R)}\bigoplus_{\Soc(R)} R/ \fm,$$ i.e., $\type(R)=1$, and so $R$ is Gorenstein. $\Box$

\begin{example}
	Let $R:=\frac{k[x,y]}{(x^3,x^2y,xy^2,y^4)}$. Then
$\fm^3$ is not reflexive.
\end{example}

\begin{proof}Since $\{xy,y^3\}\subseteq\Soc(R)$, $R$  is not Gorenstein.
 Recall that	$\fm^4=0$ and $\fm^3=y^3R\neq 0$.  
 Proposition \ref{29} shows that $\fm^3$ is not reflexive. 
\end{proof}

\begin{fact}\label{30}  Let $(R,\fm)$ be a local ring.  If $\depth R\geq 2$, then $\fm^n$ is not reflexive for
	all $n\geq 0$.
\end{fact}

\begin{proof} On the contrary, suppose that $\fm^n$ is reflexive for some natural number $n$. As $\fm^n$ is a reflexive $R$-module
	and $\depth R\geq 2$, by \cite[Proposition 1.4.1]{BH}, we conclude that $ \depth_R \fm^n \geq 2 $.
	The short exact sequence $0 \to \fm^n \to R \to R/\fm^n \to 0$
	yields the following long exact sequence of local cohomology modules:
	$$  0 \rightarrow \text{H}_{\fm}^0 (\fm^n) \rightarrow \text{H}_{\fm}^0 (R) \rightarrow \text{H}_{\fm}^0 (R/\fm^n)
	\rightarrow \text{H}_{\fm}^1 (\fm^n) \rightarrow \text{H}_{\fm}^1 (R) \rightarrow \text{H}_{\fm}^1 (R/\fm^n)\rightarrow \cdots .$$
	As $\depth R\geq 2$, we have $\text{H}_{\fm}^0(R)=\text{H}_{\fm}^1(R) =0$, and so $\text{H}_{\fm}^1 (\fm^n)\cong \text{H}_{\fm}^0 (R/\fm^n)=
	R/\fm^n \neq 0 .$
	This implies that $\depth_R(\fm^n)=\inf \lbrace i\in \mathbb{N}_0 \mid \text{H}_{\fm}^i(\fm^n) \neq 0 \rbrace \leq 1.$
	This is a contradiction.
\end{proof}

Recall that Bass \cite[Theorem 6.2]{bass} proved that an artinian local ring is Gorenstein iff
all
of its ideals are reflexive.
\begin{proposition}\label{fa}  Let $(R,\fm)$ be a local ring. The following assertions are true:
	\begin{itemize}
		\item[(i)]  If $\dim R=0$ and $\fm$ is reflexive, then $R$ is Gorenstein.
		\item[(ii)]  If $\depth R\geq 2$, then $\fm\ncong \fm^{**}$. In fact, $\fm^n$ is not reflexive for all $n>0$.
		\item[(iii)]  If $R$ is quasi-reduced and $\depth R=1$, then $\fm\cong \fm^{**}$.
	\end{itemize}
\end{proposition}

\begin{proof}
i)	Apply $(-)^{*}$ to the short exact sequence
 $0\to \fm  \overset{i}\longrightarrow R \to R /\fm \to 0$  implies the exact sequence  fits in the following diagram:

	$$
\begin{CD}
 	0@>>> \Hom(R / \fm  , R) @>>> R^\ast @> >> \fm^\ast@>>>\Ext_{R}^1(R/\fm,R)  @>>> 0\\
@.=@AAA\cong @AAA = @AAA = @AAA  \\
 	0@>>> \bigoplus_{i=1}^{ t}  R/\fm @>>> R @>>>\fm^\ast@>>> \bigoplus_{i=1}^{ t_1}  R/\fm @>>>0,\\
\end{CD}
$$where $t$ is the type of $R$ and $t_1$ is an integer.
Let us break down the bottom sequence   into the following  short exact sequences:	
\begin{itemize}
	\item[(1)]  $0\to \bigoplus_{i=1}^{ t}  R/\fm \to R\to L\to 0$,
	\item[(2)]   $0\to L    \to \fm^\ast\to \bigoplus_{i=1}^{ t_1}  R/\fm\to 0$.
\end{itemize}
Taking duality from 2) yields $0\lo\bigoplus_{i=1}^{ t} \bigoplus_{i=1}^{ t_1}  R/\fm   \lo \fm\cong\fm^{\ast\ast}.$ 
In particular, $0\to\bigoplus_{i=1}^{ t} \bigoplus_{i=1}^{ t_1}  R/\fm \subseteq R$. Taking socle and computing
the length yields that  $ tt_1\leq t$. Since $t>0$ we deduce that $t_1\leq 1$. We have two possibilities:
(a) $ t_1=0$, and (b)   $ t_1\neq0$.
\begin{itemize}
	\item[(a)] Assume $ t_1=0$. Recall that $\mu_i:=\dim\Ext_{R}^i(R/\fm,R)$ is the $i$-th Bass' number and $t_1=\mu_1$.
	Suppose on the way of contradiction that $\id(R)=\infty$. In this case Bass \cite[Lemma 3.5]{bass} proved that $\mu_i>0$ for all $i>\dim R$.  Since $t_1=0$ we get a contradiction. So,  $\id(R)<\infty$, i.e., $R$  is Gorenstein.
	\item[(b)]  Assume  $ t_1\neq0$. This yields that $t_1=1$. Then, we have   $0\to R/ \Soc(R)    \to \fm^\ast\to   R/\fm\to 0$. Recall that  $\Hom( R/ \Soc(R) ,R)=\{r\in R:r\Soc(R)=0\}=\fm.$ First, we take  another star and then  apply
	the last observation to deduce the following exact sequence  $0 \lo \bigoplus_{i=1}^{ t}  R/\fm \lo \fm^{\ast\ast}\lo \fm\stackrel{\phi}\lo \Ext^1_R(R/\fm ,R) = \bigoplus_{i=1}^{ t_1}  R/\fm =R/ \fm.$ 
In other words: $$0 \lo \bigoplus_{i=1}^{ t}  R/\fm \lo \fm^{\ast\ast}\lo \fm\lo\im{\phi}\lo 0.$$Recall that
$\im \phi\subset R/\fm$. This says  that the  length of $\im \phi$ is at most one. Let us take the length and use its additivity.
This yields
 \[\begin{array}{ll}
0&=\ell(\bigoplus_{i=1}^{ t}  R/\fm )-\ell(\fm^{\ast\ast})+\ell(\fm)-\ell(\im\phi)\\&=t-\ell(\im\phi),\\
\end{array}\]here, we used the fact that $\fm$ is reflexive.
So,  $0<t=\ell(\im\phi)\leq 1,$ 
i.e., type of $R$ is one and so $R$ is Gorenstein.
\end{itemize}

The proof is now complete. 
	
ii)	By Fact \ref{30}, $\depth(\fm)=1$. Since $\depth(\fm^{**})\geq 2$ we have $\fm\ncong \fm^{**}$.

iii) Having Fact \ref{mas} in mind, the argument is a slight modification of \cite[Proposition 4.1]{F}. We leave the routine details to the reader.
\end{proof}

\begin{corollary}
Let $R$ be $(\Se_2)$, and $\fp$ be a prime ideal of height at least two. The following holds:	\begin{itemize}
	\item[(i)] $\fp^n$ is not reflexive for all $n>0$.
	\item[(ii)]  $\fp^{(n)}$ is not reflexive for all $n>0$.
\end{itemize}
\end{corollary}
\begin{proof}
	Suppose on the way of contradiction that $\fp^n$ (resp. $\fp^{(n)}$) is reflexive.
	Then $\fp^nR_{\fp}$ (resp. $\fp^{(n)}R_{\fp}$) is reflexive.
	Recall that $\fp^{(n)}R_{\fp}=\fp^nR_{\fp}$,
	and from Serre's condition that $\depth(R_{\fp})\geq 2$. It remains to apply the previous result and get to a contradiction.
\end{proof}
 
Here, we present a sample with $\depth R=0$:
\begin{example}\label{neq2}
Let $S$ be any regular
local ring of dimension $n>0$ and  $\fp$ be a  prime ideal of $S$. Let  
$R:=S/\fp \fm$. 
	Then $\fm\cong \fm^{\ast\ast}$ if and only if $n=1$.
\end{example}

\begin{proof}
Without loss of generality, we may assume $\fp\neq 0$. First, assume that $\Ht(\fp)\geq2$.
According to a result of Ramras,
$R$ is $\BNSI$. In particular, any reflexive module is free. This yields that $\fm\cong \fm^{\ast\ast}$ if and only if $n=1$. Now, we consider to the case $\Ht(\fp)=1$. Since  $S$ is $\UFD$,
$\fp=(x)$ for some $x$. Without loss of generality we assume that $n>1$, and we are going to show $\fm$
is not reflexive. Suppose on the way of contradiction,
$\fm$ is reflexive and search a contradiction. 	
Recall that $$\Hom_R(k,R)=\{r:r\fm=0\}=xR\quad(\ast)$$Apply $(-)^{*}$ to short exact sequence
	 $0\longrightarrow \fm  \overset{i}\longrightarrow R \longrightarrow R /\fm \longrightarrow 0,$    gives us:

$$
\begin{CD}
0@>>> \Hom(R / \fm  , R) @> >> R^\ast @>>> \fm^\ast@>>>\Ext_{R}^1(R/\fm,R)  @>>> 0\\
@.=@AAA\cong @AAA = @AAA = @AAA  \\
0@>>> xR @>>> R @>>>\fm^\ast@>>> \bigoplus_{i=1}^{ t_1}  R/\fm @>>>0,\\
\end{CD}
$$where $t_1$ is an integer.
In other words,  the sequence 	 $0\lo R/xR    \lo \fm^\ast\lo \bigoplus_{i=1}^{ t_1} k\lo 0$ is exact.
We take another dual and use   $\Hom_R(R/xR,R)=\{r:rx=0\}=\fm.$ Let us  put things into the following diagrams:
	
	$$
	\begin{CD}
	0@>>> \bigoplus_{i=1}^{ t_1}\Hom(R / \fm  , R) @>>> \fm^{\ast\ast} @>>> (R/xR )^\ast@>>>\bigoplus_{i=1}^{ t _1}\Ext_{R}^1(R/\fm,R)   \\
	@.=@AAA\cong @AAA = @AAA = @AAA  \\
	0@>>> \bigoplus_{i=1}^{ t_1 }xR @>>> \fm @>>>\fm@>>>\bigoplus_{i=1}^{ t _1} \bigoplus_{i=1}^{ t _1}  R/\fm . \\
	\end{CD}$$
On the one hand, there is an equality
$(0:x)=\fm$. From this,  $\Soc(xR)=\Soc(R/(0:x))=\Soc(k)=k,$  and consequently 
 $\dim(\Soc(\bigoplus_{i=1}^{ t_1}xR)) = t_1\dim(\Soc(xR))=t_1. $	
	On the other hand, $\bigoplus_{i=1}^{ t_1 }xR \subseteq R$. By taking length from
$\bigoplus_{i=1}^{ t _1}xR\subseteq \Soc(R)=xR$  we deduce $t_1\leq1$. In view of \cite[Page 373]{BH}
we observe  $1\geq t_1=\mu_1(\fm,R)\geq \dim R=n-1\geq 1.$ This implies  $n=2$.	
In this case $\dim R=1$. In sum, we proved that  $\mu_{\dim R} (\fm, R)=1.$  
This allows us to apply  a beautiful result of Roberts (see \cite[9.6.3]{BH}) to deduce that $R$ is Cohen-Macaulay, i.e.,
 $0=\depth(R)=\dim R=1.$  This is a contradiction that we searched for it.
\end{proof}

\begin{remark}(Souvik Dey)
	i) In  Proposition \ref{fa}(i), one may weaken $\dim R=0$ to $\depth R=0$. Indeed, this follows from \cite[Theorem 4.1(3)]{dey} by a different argument.
	
	ii) In Proposition \ref{fa}(iii), no quasi-reduced hypothesis is needed, see \cite[Theorem 4.1(2)]{dey}. 
	
		iii) Example \ref{neq2} {fa}(iii) extends to more general setting \cite[Theorem 4.1(3)]{dey}. 
\end{remark}

\begin{corollary}
	Let $R:=k[[X,Y]]	/{X (X,Y)}$. Then   $\fm^n$ is not reflexive for all $n>0$.
\end{corollary}

\begin{proof} In view of   Example \ref{neq2} we know $\fm $ is not reflexive. Now, let $n>1$ and suppose on the way of contradiction that  $\fm^n=y^nR$ is reflexive. In particular, $$(\fm^n)^{\ast}\cong(\fm^n)^{\ast\ast\ast}\quad(\natural)$$ Recall that
	$$(\fm^n)^\ast=\Hom(R/(0:y^n),R)=\Hom(R/xR,R)\cong\{r\in R:rx=0\}=\fm\quad(\dagger)$$Taking two times dual from $(\dagger)$  yields $$(\fm^n)^{\ast\ast\ast}\cong\fm^{\ast\ast}  \quad(\sharp)$$Combining theses together
	$$\fm\stackrel{\dagger}\cong(\fm^n)^{\ast}\stackrel{\natural}\cong(\fm^n)^{\ast\ast\ast}\stackrel{\sharp}\cong\fm^{\ast\ast}\quad(+)$$Then $(+)$ shows that $\phi_{\fm}:\fm\to \fm^{\ast\ast}$ is an isomorphism. By definition, $\fm$ is reflexive. This is in contradiction with   Example \ref{neq2}. So, $\fm^n$ is not reflexive.	
\end{proof}

\begin{discussion}\label{n}Reflexivity of  $\fm$ does not imply  reflexivity of ideals:
\begin{itemize}	\item[ i)] Let $(R,\fm)$ be a $1$-dimensional reduced ring which is not Gorenstein.
	Then $\fm$ is reflexive and there is an ideal which is not reflexive. Indeed, thanks to Proposition \ref{fa}(iii) we know $\fm$ is reflexive. Recall that $1$-dimensional reduced rings
	are Cohen-Macaulay.
	It remains to recall from \cite[(6.2) Theorem]{bass} that any one-dimensional  Cohen-Macaulay ring is Gorenstein
	if and only if   every ideal   is reflexive.
	
\item[ ii)] Concerning to the first item, canonical module is the proposed ideal.
	To see an explicit example, let $R:=k[[x^3,x^4,x^5]]$. Then $\omega_R=(x^3,x^4)$ is not reflexive.\end{itemize}
\end{discussion}

 The Gorenstein property  of a $1$-dimensional  ring
may follow by observing that $\fm$ is totally reflexive. This involves to checking
infinitely many vanishing of Ext-modules.
Vanishing of some initial $\Ext$-modules  may yield the Gorenstein property:

\begin{remark}\label{31}  Let $(R,\fm)$ be a 1-dimensional Cohen-Macaulay local ring.  Then $R$ is Gorenstein
	if and only if $\Ext_R^1(\fm,R)=\Ext_R^1((\fm^2)^\ast,R)=0$.
\end{remark}

\begin{proof} First, assume that $R$ is Gorenstein. For any nonzero submodule $L$ of a finite rank free $R$-module, we have
	$\depth_RL\geq 1$, and so the Auslander-Bridger formula implies that $$\sup\{i\in \mathbb{N}_0\mid \Ext_R^i(L,R)\neq 0\}=
	\Gdim_RL=0.$$
	Conversely, we assume   $\Ext_R^1(\fm,R)=\Ext_R^1((\fm^2)^*,R)=0$, and we are going to show  $R$ is Gorenstein. By \cite[Proposition 4.1]{F}, we know that $\fm^{**}\cong
	\fm $. Set $\mu:=\text{vdim}_{k}(\fm /\fm^2)$ and $t:=\text{vdim}_{k}(\Ext_{R}^1(R/\fm ,R))$. The short exact sequence
 $0\to \fm^2 \overset{i}\to \fm \to \fm /\fm^2 \to 0$  implies the exact sequence
	$$0 \longrightarrow \Hom(\fm / \fm^2 , R) \longrightarrow \fm^* \longrightarrow (\fm^2)^* \longrightarrow
	\Ext_{R}^1(\fm/\fm^2,R) \longrightarrow \Ext_{R}^1 (\fm, R).$$  By  definition of  $\mu$ and $t$, one has 
 $\Ext_{R}^1 (\fm/\fm^2,R)\cong \bigoplus_{i=1}^{t \mu}  R/\fm.$  Since $\depth_R(R)>0$, we can easily see that  
	$\Hom(\fm /\fm^2,R)=0$. We combine this along with $\Ext_R^1(\fm,R)=0$ to deduce the following short exact sequence
 $0 \to \fm^* \overset{i^*}\to (\fm^2)^*\lo \bigoplus_{i=1}^{t \mu}  R/\fm \to 0.$ 
	Since  $(R/\fm)^*=0=\Ext_{R}^1((\fm^2)^*,R),$  and via applying the functor $(-)^{*}$ to the later exact sequence, it yields 
	the following exact sequence
	$$0 \longrightarrow (\fm^2)^{**} \overset{i^{**}}\longrightarrow \fm^{**} \longrightarrow \bigoplus_{i=1}^{t^2 \mu}  R/\fm  
	\longrightarrow 0.$$ Now recall that  $\fm^{**}\cong
	\fm $. 
	In other words, $\fm/(\fm^2 )^{**}\cong \bigoplus_{i=1}^{t^2 \mu}  R/\fm $.
	By definition, $\fm/\fm^2=	\bigoplus_{i=1}^{  \mu}  R/\fm$. Let us recall    from $ \fm^2\stackrel{\subseteq}\lo(\fm^2 )^{**}\subseteq \fm$ that 
	$\fm/\fm^2 \to    \fm/(\fm^2 )^{**} \to 0$.
	We put all things together and we get to
	the following diagram:$$\xymatrix{
		&& \fm/\fm^2 \ar[d]_{=}\ar[r]^{}&  \fm/(\fm^2 )^{**} \ar[d]^{=} \ar[r] &0&\\
		&&	\bigoplus_{i=1}^{  \mu}  R/\fm\ar[r]_{f }& 	\bigoplus_{i=1}^{t^2 \mu}  R/\fm.\\
		&&&}$$
	We deduce from this diagram that $f$ 
	is surjective. This yields the  validity of  $ \mu \geqslant \mu t^2 $. Consequently,  $ 0<t\leq 1 $. Therefore, $ R $ is Gorenstein.
\end{proof}

One may apply a famous  result of Roberts and simplifies  the above Remark:
\begin{remark}(Souvik Dey)
We can drop the Cohen—Macaulay, and the vanishing of $\Ext^1_R((\fm^2)^*, R)$, and just assume $\depth R=1$ and $\Ext^1_R(\fm, R)=0$. Indeed, this implies $\Ext^2_R(k, R)=0$, hence $R$ is Gorenstein by using another result of Roberts, see \cite[Theorem II]{Ro2}.\end{remark}

\section{Questions by  Holanda and Miranda-Neto}

The following question was asked in \cite[Question 5.24]{nano}: \begin{question}\label{11.1}
Let R be a Cohen-Macaulay local ring with canonical module $\omega_R$. If $\pd_R(\omega_R^\ast)
<\infty$, must R be Gorenstein?
\end{question}

First, we deal with low-dimensional cases:
\begin{proposition}\label{11.2}
	Suppose $\dim(R)\leq 3$.	Then   Question \ref{11.1} is satisfied.
\end{proposition}

\begin{proof}
  Suppose first that $d:=\dim R=0$. Recall that $\pd_R(\omega_R^\ast)
	<\infty$. This allows us to apply   Auslander-Buchsbaum formula, to observe that
	$$\pd_R(\omega_R^\ast)=\depth(R)-\depth(\omega_R^\ast)=0.$$By the mentioned result of Kaplansky, $\omega_R^\ast $ is free. Since $\depth(R)=0$, and in view of Ramras' result from subsection 4.4,
	$\omega_R  $ is free. Thus, $R$ is Gorenstein.
	
	Suppose $d=1$.  By a formula of Auslander-Goldman,
	$$\depth(\Hom(\omega_ R,R))\geq \min\{2,\depth(\omega_ R)\}=1.$$ By another use of Auslander-Buchsbaum formula, $\omega_ R^\ast$ is free.
	Now, recall
	
	\begin{itemize}	\item[ Fact]i):  
		(See \cite[Lemma 3.9]{dao}) Suppose $R$ is Cohen-Macaulay with dimension   1, and suppose $M\in CM(R)$.
		Then $M^\ast$
		free implies $M$ is free.
	\end{itemize}
	So, $\omega_ R$ is free. Consequently, $R$ is Gorenstein.

	Suppose $d=2$. The assumptions behave well with respect to the localization. According to the previous cases, we may and do assume that $R$ is $(G_1)$. So, $R$ is quasi-normal. Over any quasi-normal ring, and by the mentioned result of Vasconcelos, a module $(-)$ is reflexive iff any $R$-regular sequence of length at most two is $(-)$-regular. So, $\omega_ R$ is reflexive. By a formula of Auslander-Goldman,
	$$\depth(\Hom(\omega_ R,R))\geq \min\{2,\depth(\omega_ R)\}=2.$$ By another use of Auslander-Buchsbaum formula, $\omega_ R^\ast$ is free.
	 So, $\omega_ R\cong \omega_ R^{\ast\ast}$ is free. Consequently, $R$ is Gorenstein.
	 
	 Suppose $d=3$. It is easy to see that  $\omega_ R^\ast$ is locally free over punctured spectrum, of projective dimension at most one, and reflexive. So, there is an exact sequence $$0\lo R^n\lo R^m\lo \omega_ R^\ast\lo 0.$$ Applying $-\otimes_R\omega_ R$ to it gives us $$0=\Tor_1^R(\omega_R,R^m)\lo\Tor_1^R(\omega_R,\omega_R^\ast)\lo R^n\otimes_R\omega_R\lo R^m\otimes_R\omega_R\lo\omega_R\otimes \omega_ R^\ast\lo 0.$$
	 Since $\Tor_1^R(\omega_R,\omega_R^\ast)$
	 is of finite length and $ \omega_R^n$
	is of positive depth, then $\Tor_1^R(\omega_R,\omega_R^\ast)=0$.
	 The long exact sequence of local cohomology module shows that $$0=H^1_\fm(\omega_R^m)\lo H^1_\fm(\omega_R\otimes \omega_ R^\ast)\lo H^2_\fm(\omega_R^n)=0,$$becuase $\depth(\omega_R^n)=\depth(\omega_R^m)=\dim(R)>2$.
	 
	 	Now, we recall the following:
	 
	 \begin{itemize}	\item[ Fact]ii): 
	 	(See \cite[Corollary 8.2]{acs}) Suppose $\depth(R)>0$  and  $M   $ is locally free on punctured spectrum.
	  If $H^1_\fm(M\otimes  M^\ast) =0$, then $M$ is free.
	 \end{itemize}
	 So, $\omega_ R$ is free. Consequently, $R$ is Gorenstein.
\end{proof}
One may derive the next result from Foxby's correspondence. In particular, the next proof suggests further applications of local cohomology to 
Auslander-Foxby's correspondence.
\begin{theorem}
 The desired property of Question \ref{11.1} is true.
\end{theorem}

\begin{proof}
We proceed by induction on $d:=\dim R$ which is finite by the local assumption. The case $d=0$ is in Proposition \ref{11.2}.   
The assumptions behave well with respect to the localization. In particular, we are able to apply induction hypothesis to deduce that $R$ is Gorenstein on the punctured spectrum.

	 \begin{itemize}	\item[ Fact]: 
	(See \cite[Proposition 3.4]{finitsup})
Let $(R,\fm,k)$ be a  local ring with an ideal $\fa$, $M$ and $N$  be  such that $\pd(M)<\infty$ and  one of them is locally free over
$\Spec(R)\setminus\V(\fa)$. Let $0\leq r<d:=\dim R$ be such that
$\grade_R(\fa,M)+\grade_R(\fa,N)\geq d+r+1$. Then   $\HH_{\fa}^0(M\otimes_RN)=\ldots=\HH_{\fa}^r(M\otimes_RN)=0$.
 \end{itemize}
According to the previous result, we may assume that $d>3$. In particular, $\depth(\omega_R^\ast)\geq 2$.
In the previous fact, apply $\fa:=\fm$, $M:=\omega_R^\ast$, $N:=\omega_R$, and $r:=1$. Recall that  $\grade_R(\fa,M)+\grade_R(\fa,N)\geq d+1+1$. So, the above fact  can be applied. It gives
$\HH_{\fm}^1(\omega_R\otimes \omega_ R^\ast)=0$.
In the light of Fact ii) from Proposition \ref{11.2}, $\omega_ R$ is free. Consequently, $R$ is Gorenstein.
\end{proof}

The following question was asked in \cite[Question 3.17]{nano}: \begin{question}\label{11.4}
	Let $R$ be a Cohen-Macaulay local ring with canonical module $\omega_R$ and dimension
	$d$. Let $M$ be a finite $R$-module satisfying $(S_k)$ with either $k = 1$ or $3 \leq k < d$, and such that
	$\Ext^i_R(M,\omega_R) = 0$ for all $i = 1, \dots ,\max\{1, d-k\}$. It is true that $M$ is maximal Cohen-Macaulay?
\end{question}
\begin{observation}
		The answer to Question \ref{11.4} is yes.
\end{observation}

\begin{proof}We may assume the ring is complete. There is nothing to prove if $\depth(M)=d$.
So, let $\depth(M)\leq i<d$. By local duality, $H^i_{\fm}(M)=\Ext^{d-i}
_R(M,\omega_R)^v $
which is zero by the assumption. By homological characterization of $\depth$, we know $H^i_{\fm}(M)=0$ for all $i<\depth(M)$. Combining these, $H^i_{\fm}(M)=0$ for all $i<d$. So, $M$ is maximal Cohen-Macaulay.
\end{proof}

The following question was asked in \cite[Question 4.7]{nano}: \begin{question}\label{11.6}
	Let $R$ be a Cohen-Macaulay local ring with canonical module $\omega_R$ and dimension
	$d>0$. If
	$\Ext^+_R(\omega_R^\ast,R) = 0$, must $R$ be Gorenstein?
\end{question}

We support it with the following two results:
\begin{observation}
Suppose $R$ is quasi-normal.	The desired property of Question \ref{11.6} is valid.
\end{observation}

\begin{proof}It is easy to see $\omega$ is reflexive. Let $F_\bullet:\cdots  \to  F_0\to \omega_R^\ast\to  0  $ be a free resolution.
Since	$\Ext^+_R(\omega_R^\ast,R) = 0$, the sequence $0\to \omega_R^{\ast\ast}\to F_0^\ast\to \ldots $ is exact. Since $\omega$ is reflexive, and by pinching $F_\bullet$ to $F_\bullet^\ast$, we see  $\omega$ is totally acyclic. This property behaves well with respect to reduction via regular sequences. Let $\underline{x}$ be a regular sequence of length $d$. Let $\overline{R}:=R/\underline{x}R$. Then there is $\omega_{\overline{R}}\cong\omega_R/\underline{x}\omega_R\subseteq \overline{F}$ where $\overline{F}$ is free. So, $\omega_{\overline{R}}$ is torsionless. By Corollary \ref{INF},
$\overline{R}$ is Gorenstein. Consequently, $R$ is as well.
\end{proof}

\begin{observation}\label{11.8}
	Suppose $R$ is of minimal multiplicity with infinite residue field.	The answer to Question \ref{11.6} is yes.
\end{observation}

\begin{proof} According  to the standard reduction
	by a regular  sequence, we may and do assume that $\dim R=1$. Suppose on the way of contradiction that $R$ is not Gorenstein. This allows us to apply Observation \ref{m3COR}
	to deduce that $\omega_R^\ast$ is free. 
	By Fact ii) from Proposition \ref{11.2} $\omega_R$ is free. Consequently, $R$
is Gorenstein.
\end{proof}

\begin{acknowledgement}
I thank   Alessio Caminata, Olgur Celikbas,   Souvik Dey, Jian Liu, Elham Mahdavy, Arash
Sadeghi,  and Saeed Nasseh for their comments on the earlier draft. I thank Rafael Holanda and 
Cleto Miranda-Neto for an email correspondence.
 I thank Mohammad Golshani for introducing  some elements of set theory.
\end{acknowledgement}

\end{document}